\newcommand{\mL}{\mathrm{L}}
\newcommand{\aff}{\mathfrak{aff}(2,\R)}
\newcommand{\msl}{\mathfrak{sl}_2(\R)}
\newcommand{\mso}{\mathfrak{so}(3)}
\newcommand{\ad}{\mathrm{ad}}
\newcommand{\tr}{\mathrm{tr}}
\newcommand{\lr}{\longrightarrow}
\newcommand{\esp}{\quad\mbox{and}\quad}
\newcommand{\R}{\mathbb{R}}
\newcommand{\ind}{\mathrm{ind}}
\newcommand{\M}{\mathrm{M}}
\newcommand{\rank}{\mathrm{rank}}
\newcommand{\al}{\alpha}
\newcommand{\G}{{\mathfrak{g}}}
\newcommand{\h}{{\mathfrak{h}}}
\newcommand{\J}{{\mathrm{J}}}
\newcommand{\so}{\mathfrak{so}}
\newcommand{\om}{\omega}
\newcommand{\e}{\check{e}}
\newtheorem{theorem}{Theorem}
\newtheorem{proposition}{Proposition}
\newtheorem{lemma}{Lemma}
\newtheorem{corollary}{Corollary}
\newtheorem{remark}{Remark}
\begin{document}
\begin{center}
\textbf{Eight-dimensional symplectic  non-solvable  Lie algebras}
\end{center}
\begin{center}
T. A\"it Aissa and M. W. Mansouri
\end{center}
\begin{center}
Department of Mathematics, Faculty of Sciences, Ibn Tofail University\\
	Analysis, Geometry and Applications Laboratory (LAGA)\\ Kenitra, Morocco\\e-mail:
	mansourimohammed.wadia@uit.ac.ma\\tarik.aitaissa @uit.ac.ma
\end{center}
\rule{1\textwidth}{1pt}
\begin{flushleft}
\textbf{Abstract}
\end{flushleft}
\begin{flushleft}
In this paper, we classify eight-dimensional  non-solvable  Lie algebras that support a symplectic structure. As well as a complete
classification is given, up to symplectomorphism, of eight-dimensional symplectic non-solvable  Lie algebras.
\end{flushleft}
\begin{flushleft}
Keywords:
        Symplectic Lie algebras,  Frobenius algebras, Left-symmetric algebras, Levi decomposition.\\
        MSC  17B30, \\ MSC 17B60, \\ MSC 53D05.
\end{flushleft}
       \rule{1\textwidth}{1pt}

\section{Introduction and main result}
Let $\G$ be a finite-dimensional real Lie algebra. We say that
$(\G,\om)$ is a {\it{symplectic Lie algebra}} if $\om$ is a
non-degenerate skew-symmetric bilinear form on $\G$ and
\begin{equation}\label{cocy}
\big(\mathrm{d}\om\big)(x,y,z):=\om([x, y], z)+\om([y, z], x)+\om([z, x],y) = 0,
\end{equation}
for all $x$, $y$, $z\in\G$, where $\mathrm{d}$ is the Chevalley-Eilenberg differential. This is to say, $\om$ is a non-degenerate
$2$-cocycle for the scalar cohomology of $\G$. Note that in such case,
$\G$ must be even-dimensional. We will then call $\om$ a symplectic
structure on $\G$. A fundamental class of symplectic Lie algebras is
 formed by  Frobenius Lie algebras, i.e., Lie algebras
admitting a non-degenerate exact 2-form. Symplectic Lie algebras are
in one-to-one correspondence with simply connected Lie groups with
left invariant symplectic forms.

Recall that two symplectic Lie algebras $(\G_1,\om_1)$ and $(\G_2,\om_2)$ are said to be
symplectomorphically equivalent if there exists an isomorphism of Lie algebras $\varphi : \G_1\lr\G_2$, which preserves the symplectic forms, that is $\varphi^*\om_2=\om_1$.

The study of symplectic Lie algebras is an active area of research.
The characterization problem of symplectic Lie algebras is still an
open problem, even though there are many interesting results on
obstructions on a Lie algebra to support a symplectic structure. Let
us recall the following well-known results (see \cite{C}).
\begin{enumerate}
        \item A semisimple Lie algebra (in particular if $[\G, \G] = \G$) does
not admit symplectic structures.
        \item The direct sum of semisimple and solvable Lie algebras cannot
be symplectic.
        \item Unimodular symplectic Lie algebras are solvable.
        \item All symplectic Lie algebras of dimension four are solvable.
\end{enumerate}

From $1.$ and $2.$ a Lie algebra that supports a symplectic structure is
either solvable or admits a non-trivial Levi-Malcev decomposition
(i.e. $\G=N\ltimes R$ with Levi factor $N\not=0$, radical part
$R\not=0$ and $\ltimes\not=\oplus$). In what follows, this second
class will be called non-solvable.

To our knowledge, the classification of symplectic Lie algebras, up to
sympectomorphism, only exist for dimensions less than four (see
\cite{O}) and  six-dimensional   nilpotent symplectic Lie algebras (see
\cite{K-G-M} and  \cite{F} for a more recent list). A classification of
a large subfamily of six-dimensional non-nilpotent solvable Lie algebras
has been made by Stursberg (see \cite{S}). More precisely, it
describes symplectic structures on Lie algebras which decompose into a
direct sum of two ideals and indecomposable Lie algebras with a
four-dimensional nilradical. This classification covers all cases
except indecomposable Lie algebras with  the nilradical is five-dimension.  Nevertheless, there are particular results for any dimensions.
In this work, we  classify non-solvable symplectic Lie algebras in eight-dimensions.

A finite-dimensional algebra $(\G,\cdot)$  is called left-symmetric algebra (abbrev. LSA) if it
satisfies the identity
\begin{equation}\label{ass}
(x,y,z)=(y,x,z)\qquad \forall x,y,z \in\G,
\end{equation}
where $(x,y,z)$  denotes the associator $(x,y,z)=(x\cdot y
)\cdot z-x\cdot(y\cdot z)$.   In this case, the commutator $[x,y]= x\cdot y-y\cdot x$
defines a  bracket that makes $\G$  a Lie algebra.
 Let $\mathrm{L}(x)$ and $\mathrm{R}(x)$ denote the left and right multiplications by the element
$x\in\G$, respectively. The identity $(\ref{ass})$ is now equivalent to the formula
\begin{equation*}
[\mathrm{L}(x),\mathrm{L}(y)]=\mathrm{L}{([x,y])},\qquad \forall x,y\in \G,
\end{equation*}
or in other words, the linear map $\mathrm{L} : \G \longrightarrow \mathrm{End}(\G)$ is a representation of Lie algebras, but in general $\mathrm{R}$  is not an algebra homomorphism, an LSA is also noted $(\G,\mL)$. For more details on left-symmetric algebras, we refer the reader to the survey article \cite{B} and the references therein.

It is known that  (see \cite{C} and \cite{M-R}) that given a symplectic Lie algebra $(\G,\omega)$, the product given by
\begin{equation*}
\omega(x\cdot y,z)=-\omega(y,[x,z]),\qquad\forall x,y,z\in\G,
\end{equation*}
induces a left symmetric algebra structure $``\cdot$" on $\G$ that satisfies
$x\cdot y-y\cdot x=[x,y]$. If in addition, the symplectic Lie algebra
$(\G,\om)$ admits a Lagrangian ideal $\J$, then the quotient algebra
$\h=\G/\J$ admits a left symmetric product and  the  symplectic Lie
algebra $(\G,\om)$ can be reconstructed from the left symmetric
algebra $\h$ (see for now \cite{C}).

\textbf{Notation.}  For $\{e_i\}_{1\leq i\leq n}$  a basis of $\G$,
we denote by $\{e^i\}_{1\leq i\leq n}$ the
dual basis on $\G^\ast$ and  $e^{ij}$  the 2-form $e^i\wedge
e^j\in\wedge^2\G^*$. Set by  $\langle F \rangle:= \mathrm{span}_\R\{F\}$ the Lie
subalgebra  generated by the family $F$. The notation and indices for the Lie algebras correspond to those given by Turkowski in \cite{T}.

The main purpose of this article is to show the following theorem.
\begin{theorem}\label{the1}
Let $\G$ be an eight-dimensional symplectic  non-solvable  Lie algebra. Then
$\G$ is symplectomorphically equivalent to one of the following Lie algebras equipped
with a symplectic form as follows:
	{\renewcommand*{\arraystretch}{1.3}
	\begin{longtable}{ll}
			\hline
		Lie algebra& Symplectic structure\\
			\hline
	$\mL_{8,3}=\mso\ltimes A_{5,7}^{1,1,1}$& $\omega= e^{17}+e^{25}+e^{36}-2e^{48}$\\
	\hline
\multirow{2}{*}{$\mL^{p\not=0}_{8,4}=\mso\ltimes
A_{5,17}^{1,p,p}$	}&$\omega= e^{14}-e^{15}+e^{26}+e^{27}-e^{34}-e^{35}$\\
&$\;\;\;\;\;\;\;+2e^{48}-2e^{58}-2pe^{68}+2pe^{78}$\\
\hline
$\mL^{p,-p\neq 0}_{8,7}=\msl\ltimes A^{1,p,-p}_{5,7}$	&$\omega_{\pm}= e^{12}+e^{15}-e^{38}-e^{56}\mp e^{67}$\\
\hline
$\mL^0_{8,8}=\msl\ltimes A^{1}_{5,8}$	&$\omega_{\pm}= e^{12}+e^{15}-e^{38}-e^{56}\mp e^{67}$\\
\hline
$\mL^{0,q\neq 0}_{8,9}=\msl\ltimes A^{1,0,q}_{5,13}$	&$\omega_{\pm}= e^{12}+e^{15}-e^{38}-e^{56}\mp e^{67}$\\
\hline
$\mL_{8,16}=\msl\ltimes A^{1}_{5,15}$	&$\omega= e^{15}-e^{16}-e^{27}-e^{34}-e^{58}-e^{68}-e^{78}$\\
\hline
\multirow{2}{*}{$\mL^{p=0}_{8,17}=\msl\ltimes A^{1,0,0}_{5,7}$}	&$\omega_{\pm}= e^{12}+e^{15}-e^{38}-e^{56}\mp  e^{67}$\\
&$\omega= -e^{14}-e^{17}-e^{25}+e^{36}-e^{48}$\\
\hline
 $\mL^{p\in]-1,1]\setminus\{0\}}_{8,17}=\msl\ltimes A^{1,p,p}_{5,7}$&$\omega= -e^{14}-e^{17}-e^{25}+e^{36}-e^{48}+pe^{78}$\\ 
 \hline
  $\mL_{8,18}^{p>0}=\msl\ltimes A^{1,p,p}_{5,17}$&$\omega= pe^{15}-e^{16}-e^{27}-pe^{34}+e^{48}-p^2e^{58}-p(e^{68}+e^{78})$\\
 \hline
$\mL_{8,20}=\msl\ltimes A^{1,1,1}_{5,7}$	&$\omega= -e^{15}+\frac{1}{2} e^{17}-2e^{26}-e^{34}-\frac{1}{2} e^{36}-e^{58}-\frac{1}{6} e^{78}$\\
\hline
$\aff\oplus\R^2$	&$\omega= e^{12}+e^{15}-e^{34}-e^{56}+e^{78}$\\
\hline
$\aff\oplus\mathfrak{aff}(\R)$	&$\omega= e^{12}+e^{15}-e^{34}-e^{56}+e^{78}$\\
\hline
	\end{longtable}}
\end{theorem}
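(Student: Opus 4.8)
The plan is to carry out a systematic classification in two stages: first, determine which eight-dimensional non-solvable Lie algebras admit any symplectic structure at all, and second, for each such algebra, classify the symplectic forms up to symplectomorphism. By the preliminary results quoted from \cite{C}, any non-solvable symplectic Lie algebra $\G$ admits a nontrivial Levi--Malcev decomposition $\G = N \ltimes R$ with $N \neq 0$ the semisimple Levi factor and $R \neq 0$ the radical. In dimension eight, the only possibilities for the semisimple part are $N = \msl$ or $N = \mso$ (both three-dimensional), forcing $\dim R = 5$. I would begin by invoking Turkowski's classification \cite{T} of non-solvable Lie algebras of dimension eight, which enumerates all such $\G = N \ltimes R$ according to the isomorphism type of $R$ (a five-dimensional solvable Lie algebra, in the $A_{5,\bullet}$ notation) and the action of $N$ on $R$. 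This reduces the problem to a finite list of candidate algebras $\mL_{8,k}$.

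The second step is, for each candidate $\mL_{8,k}$ in Turkowski's list, to decide whether the Chevalley--Eilenberg 2-cocycle equation \eqref{cocy} admits a non-degenerate solution. Here I would write a general closed 2-form $\om = \sum_{i<j} a_{ij}\, e^{ij}$ with unknown coefficients, impose $\mathrm{d}\om = 0$ using the structure constants of $\mL_{8,k}$, and then require $\om^4 \neq 0$ (equivalently, the Pfaffian of the coefficient matrix is nonzero) for non-degeneracy. The cocycle condition is linear in the $a_{ij}$, so $Z^2(\G)$ is computed by linear algebra; non-degeneracy then cuts out a Zariski-open condition. Several candidates will be eliminated at this stage because every closed 2-form turns out to be degenerate. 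The obstructions recalled in the introduction give useful sanity checks: for instance, unimodularity would force solvability (item 3), and a direct sum of semisimple and solvable pieces is excluded (item 2), which immediately rules out families such as $N \oplus R$.

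The third and most delicate step is the reduction up to symplectomorphism. For each surviving algebra, the automorphism group $\mathrm{Aut}(\G)$ acts on the space of symplectic cocycles, and two forms $\om_1, \om_2$ are symplectomorphically equivalent precisely when some $\varphi \in \mathrm{Aut}(\G)$ satisfies $\varphi^*\om_2 = \om_1$. I would compute $\mathrm{Aut}(\G)$ (or a sufficiently large subgroup, respecting the Levi decomposition and the derived series, which any automorphism must preserve), and use this action to normalize the coefficients of $\om$, arriving at the canonical representatives listed in the theorem. This is where the bulk of the case-by-case labor lies: tracking how the automorphisms constrain the free parameters, handling the branching into subcases (e.g.\ the sign choices $\om_\pm$ and the parameter ranges such as $p \in \left]-1,1\right]\setminus\{0\}$), and confirming that distinct normal forms are genuinely inequivalent. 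The main obstacle I anticipate is precisely this orbit analysis under $\mathrm{Aut}(\G)$: establishing that the chosen representatives exhaust all orbits and that no two are conjugate, which requires computing invariants of the pair $(\G,\om)$ that separate the classes.

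Finally, to complete the classification I would verify that the reconstruction via the associated left-symmetric structure (the product $\om(x\cdot y,z)=-\om(y,[x,z])$ from \cite{C,M-R}) is consistent for each listed pair, and, where a Lagrangian ideal $\J$ exists, cross-check the normal form against the induced LSA on $\h = \G/\J$. This provides an independent confirmation of the list and closes the argument.
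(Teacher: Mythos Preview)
Your outline is broadly sound and would in principle succeed, but it diverges from the paper's strategy in two substantive ways.

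First, a gap: Turkowski's list in \cite{T} covers only \emph{indecomposable} non-solvable Lie algebras. You do not explain how the decomposable cases $\aff\oplus\R^2$ and $\aff\oplus\mathfrak{aff}(\R)$ arise. The paper handles this separately (Proposition~\ref{pr 0}) by noting that any six-dimensional non-solvable symplectic Lie algebra is $\aff$, so an eight-dimensional decomposable symplectic non-solvable algebra must split as $\aff$ plus a two-dimensional symplectic algebra. Your appeal to item~2 (``direct sum of semisimple and solvable is excluded'') does not cover this, since $\aff$ is neither semisimple nor solvable.

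Second, and more importantly, your third step---direct orbit analysis under $\mathrm{Aut}(\G)$---is not how the paper proceeds, and you correctly flag it as the main obstacle. The paper instead exploits structure: it splits the surviving algebras into a Frobeniusian type (those with a Lagrangian ideal $\J$) and a non-Frobeniusian type (those with a symplectic ideal $\mathfrak{a}$). For the Frobeniusian type, the quotient $\h=\G/\J\cong\mathfrak{s}\oplus\R$ inherits a left-symmetric product, and the paper \emph{classifies all LSA structures on $\mathfrak{s}\oplus\R$} (Theorem~\ref{the 2}); it then invokes the Baues--Cort\'es correspondence (Proposition~\ref{pr 3}) between symplectic Lie algebras with Lagrangian ideal and triples $(\h,\cdot,[\alpha])$ with $[\alpha]\in H^2_{L,\rho}(\h,\h^*)$, and shows this cohomology group vanishes in every case, yielding uniqueness for free. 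For the non-Frobeniusian type, the orthogonal $\mathfrak{a}^\perp$ is isomorphic to $\aff$, whose symplectic structure is already known to be unique, and the classification reduces to normalizing the two-dimensional piece. Your proposal treats the LSA structure only as a post-hoc consistency check; in the paper it is the engine of the argument. The payoff is that the paper never has to compute $\mathrm{Aut}(\G)$ for the eight-dimensional algebras or analyze orbits directly---the uniqueness is encoded in $H^2_{L,\rho}=0$. Your brute-force route would work but is considerably heavier; the paper's route also explains \emph{why} the list is what it is, via the LSA classification on $\mathfrak{s}\oplus\R$.

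A smaller point: for eliminating non-symplectic candidates, the paper uses two shortcuts you do not mention---unimodularity (which immediately kills a dozen algebras) and the index formula~\eqref{ind} combined with $H^2(\G)=0$ (which kills the rest of the non-symplectic ones without solving the cocycle equations).
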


 Our main result (Theorem \ref{the1}) fall into four steps:

Let $\G$ be an eight-dimensional non-solvable Lie algebra, i.e., $\G$ admits a non-trivial Levi-Malcev decomposition $\G =N\ltimes R$, with $N$ is the Levi factor, and $R$ its radical part.
\begin{enumerate}

\item Initially, we will consider the case in which $\G$ can be decomposed. In this case, any symplectic Lie algebra of dimension less than or equal to six is either solvable or a Lie algebra of affine transformation $\aff$.  We can deduce the statement for eight-dimensional symplectic non-solvable decomposable Lie algebras. Proposition \ref{pr 0} provides the result.
\item Our study identifies the symplectic Lie algebras among the 22 non-solvable indecomposable Lie algebras listed by Turkowski \cite{T}.
The results are detailed in Lemma \ref{Lem1} and Proposition \ref{proforsym}.
\item We give a complete classification of eight-dimensional symplectic non-solvable indecomposable Lie algebras and a description of all their symplectic structures, we additionally use this
description to give some first properties. The result is given by Proposition \ref{pr5} and
their consequences.
\item As a result of the previous description, we provide a complete classification, up to symplectomorphism, of eight-dimensional  symplectic non-solvable indecomposable Lie algebras. We proceed as follows: 

In light of Proposition \ref{pr 0} and \ref{pr5}, we can distinguish two types of eight-dimensional symplectic non-solvable indecomposable Lie algebras: 
\begin{enumerate}
\item The first one admit a unique Lagrangian ideal $\J$, this type is Frobeniusian. It is
known that in the presence of an isotropic ideal $\J$, the symplectic
stucture on $\G$ induces a left-symmetric algebra (LSA) structure on
$\G/\J$. Then, we classify all LSA structures on 4-dimensional
Lie algebras of the form $\mathfrak{s} \oplus \R$, $\mathfrak{s} = \mathfrak{so}(3)$ or $\mathfrak{s} = \mathfrak{sl}_2(\R)$ (see, Theorem \ref{the 2}), which are
precisely those arising as $\G/\J$ for $\G$ an exact symplectic Lie algebra
with a Lagrangian ideal $\J$. It is known \cite{B-C} that every symplectic Lie algebra $(\G,\omega)$, which has a Lagrangian ideal $\J$ arises as a Lagrangian extension of a left-symmetric Lie algebra. Then, the first type  can be
reconstructed from the left-symmetric Lie algebra structure on $\G/\J$. The results of this part are summarized in Table \ref{tab2} and \ref{tab3}.
\item  Using the fact that the isomorphism classes of Lagrangian symplectic extensions of a left-symmetric Lie
algebra $(\h,\cdot)$ are parametrized by a suitable restricted cohomology group $H^2_{L,\rho}(\h,\h^\ast)$, with $\h\cong\G/\J$, we give  a complete classification of symplectic structures, up to symplectomorphism for the Frobeniusain Lie algebras. In Proposition \ref{pr I}, the result is described in detail.
\item As for the Lie algebras which are not exact (second type),  we show that they admit
a symplectic ideal $\mathfrak{a}$. As a consequence, such symplectic Lie algebras are semidirect products $\mathfrak{a}\rtimes\aff$, where $\aff$ is the Lie algebra
of the affine transformations of $\R^2$, which is known to admit a unique
symplectic structure. A detailed description of the result is given in Proposition \ref{pr II}.
\end{enumerate}
\end{enumerate}

This paper is organized as follows. In Section $2$, we recall some basic properties of index of Lie algebra and symplectic Lagrangian reduction. In Section $3$, we give a complete classification of eight-dimensional symplectic  non-solvable  Lie algebras and a description of all their symplectic structures, we additionally use this description to give some first properties.  In section $4$, using the results of the previous section, we give a complete classification, up to symplectomorphism, of eight-dimensional symplectic non-solvable  Lie algebras. Section $5$ is an Appendix  contains all the tables and  the details of the computations needed in the proof of some propositions.

The authors wish to thank S. El bourkadi for having read carefully the first version of
the paper and pointing out many typing and style mistakes.

\section{Preliminaries}

In this section, we recall some  notions on symplectic Lie algebras,
and Frobenius Lie algebras.

\textbf{ Index of Lie algebra}:
Let $f\in\G^*$ and $B_f$ is the associated skew-symmetric Kirillov
form defined by $B_f(x,y)=f([x,y])$. The index of Lie algebra $\G$
(see \cite{D}) is the integer invariant defined by
\begin{equation*}
\ind_\G=\min_{f\in\G^*}\dim\big({\ker B_f}\big),
\end{equation*}
with $\ker(B_f) = \{x\in\G\; :\; f\big([x,y]\big) = 0,\;\forall y\in\G\}$.

Let $\{x_1,...,x_n \}$ be a basis of $\G$. We can express the index
using the matrix $\M_\G=([x_i,x_j])_{\{1\leq i< j\leq n\}}$ as a
matrix over the ring $S(\G)$, (see \cite{D}). We have the following
characterization
\begin{equation}\label{ind}
\ind_\G=\dim\G-\rank_{R(\G)}(\M_\G),
\end{equation}
where $R(\G)$ is the quotient field of the symmetric algebra $S(\G)$.

From the above, it follows that: Let $\G$ be a Lie algebra.
Then $\ind_\G= 0$ if and only if $\G$ is Frobenius (admits an exact
symplectic form).

\textbf{Symplectic Lagrangian reduction}: The results of this
paragraph are detailed and demonstrated in \cite{B-C}. Let $(\G,\om)$ be
a symplectic Lie algebra. An ideal $\J$ of $(\G,\omega)$ is called  isotropic
  if $\J\subset\J^\perp$ with
\[\J^\perp=\{x\in\G\;:\;\om(x,y)=0,\;\forall y\in\J\}.\]
 If the orthogonal $\J^\perp$ is an ideal in $\G$ we call $\J$
a normal isotropic ideal. If $\J$ is a maximal isotropic subspace $\J$
is called a Lagrangian ideal.

Let $\J$ be a normal isotropic ideal and let $\h=\G/\J^\perp$ denote the
associated quotient Lie algebra. From $\om$ we obtain a non-degenerate
bilinear pairing  $\om_\h$ between $\h$ and $\J$, by declaring
\begin{equation*}\om_\h(\overline{x},a)=\om(x,a),\qquad\forall
\overline{x}\in\h,\;a\in\J.\end{equation*}

\begin{proposition}\label{pr 1}
The homomorphism $\om_\h\in \mathrm{Hom}(\h,\J^*)$,
$\overline{x}\longmapsto\om_\h(\overline{x},.) $, is an isomorphism and $\h$ carries a left-symmetric product
 defined by the equation
\begin{equation*}\om_\h(\overline{x}\cdot \overline{y},a)=-\om(y,[{x},a]),\qquad\forall
\overline{x},\,\overline{y}\in\h,\;a\in\J.\end{equation*}
\end{proposition}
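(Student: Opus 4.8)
The plan is to separate the statement into its two assertions: that $\om_\h$ is a linear isomorphism $\h\to\J^*$, and that the displayed formula defines a well-defined left-symmetric product on $\h$. For the isomorphism I would first check that $\om_\h$ is well-defined on $\h=\G/\J^\perp$: if $\overline{x}=\overline{x'}$ then $x-x'\in\J^\perp$, so $\om(x-x',a)=0$ for every $a\in\J$, hence $\om_\h(\overline{x},a)$ is independent of the representative. The induced map $\overline{x}\mapsto\om_\h(\overline{x},\cdot)$ is injective, since $\om_\h(\overline{x},\cdot)\equiv0$ forces $x\in\J^\perp$, i.e. $\overline{x}=0$. Finally, non-degeneracy of $\om$ gives $\dim\J^\perp=\dim\G-\dim\J$, so $\dim\h=\dim\J=\dim\J^*$, and an injective linear map between spaces of equal finite dimension is an isomorphism.

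For the product, rather than verifying left-symmetry on $\h$ from scratch, I would descend the left-symmetric product already available on $\G$. Recall that $x\cdot y$, defined by $\om(x\cdot y,z)=-\om(y,[x,z])$ for all $z\in\G$, makes $(\G,\cdot)$ a left-symmetric algebra. Restricting $z$ to $a\in\J$ gives $\om_\h(\overline{x\cdot y},a)=\om(x\cdot y,a)=-\om(y,[x,a])$, so the product in the statement is exactly the image $\overline{x}\cdot\overline{y}=\overline{x\cdot y}$ of the $\G$-product under the projection $\pi\colon x\mapsto\overline{x}$. Thus it suffices to show that $\J^\perp$ is a two-sided ideal of $(\G,\cdot)$; then $\pi$ is a surjective algebra homomorphism, the identity $(x,y,z)=(y,x,z)$ passes to the quotient, and $(\h,\cdot)$ is automatically a well-defined left-symmetric algebra (bilinearity being inherited as well). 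The inclusion $\G\cdot\J^\perp\subset\J^\perp$ is immediate: for $u\in\J^\perp$ and $a\in\J$ one has $\om(x\cdot u,a)=-\om(u,[x,a])=0$, because $[x,a]\in\J$ (as $\J$ is an ideal) and $u\in\J^\perp$.

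The main obstacle is the companion inclusion $\J^\perp\cdot\G\subset\J^\perp$, which is precisely where the normality hypothesis (that $\J^\perp$ is an ideal) enters together with the cocycle condition. For $u\in\J^\perp$, $x\in\G$, $a\in\J$, I would apply the $2$-cocycle identity to the triple $(u,a,x)$ and rewrite $\om(u\cdot x,a)=-\om(x,[u,a])=\om([u,a],x)=\om([x,a],u)+\om([u,x],a)$. Here the first term vanishes since $[x,a]\in\J$ and $u\in\J^\perp$, and the second vanishes since $[u,x]\in\J^\perp$ (using that $\J^\perp$ is an ideal) and $a\in\J$. Hence $\om(u\cdot x,a)=0$ for all $a\in\J$, so $u\cdot x\in\J^\perp$. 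With both inclusions established, $\J^\perp$ is a two-sided ideal, $\h=\G/\J^\perp$ inherits the quotient left-symmetric product $\overline{x}\cdot\overline{y}=\overline{x\cdot y}$, and this product is the one given by the displayed formula, completing the proof.
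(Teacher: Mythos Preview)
Your proof is correct. The paper does not actually prove this proposition; it is stated as part of a summary of the symplectic Lagrangian reduction theory from Baues--Cort\'es \cite{B-C}, to which the reader is referred for the details. Your argument---descending the left-symmetric product on $(\G,\om)$ to the quotient $\h=\G/\J^\perp$ by showing that $\J^\perp$ is a two-sided ideal for the product on $\G$---is clean and complete, and the step where you invoke normality of $\J^\perp$ together with the $2$-cocycle identity to obtain $\J^\perp\cdot\G\subset\J^\perp$ is exactly the point where the hypothesis is used.
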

Conversely, let $(\h,\cdot)$ be a left-symmetric algebra, since $``\cdot"$ is a left-symmetric product, it defines a Lie algebra representation. We denote by $\rho$ the dual representation. To each cocycle $\al\in Z^2_\rho(\h,\h^*)$ is associated an extension of Lie algebras
\[0\lr\h^*\lr\G_{\rho,\al}\lr\h\lr0,\]
with $\G_{\rho,\al}=\h\oplus \h^*$ and the non-zero Lie brackets are defined by
	\begin{align}\label{bra}
		\begin{split}
	[x,y]_{\G_{\rho,\al}}& =[x,y]_\h+\al(x,y),\qquad x, y\in\h, \\
	{[x,\xi]_{\G_{\rho,\al}}} &=\rho(x)\xi, \qquad\qquad x\in\h\;\;and\,\; \xi\in\h^*.
	\end{split}
\end{align}
We let $\om$ be the non-degenerate alternating two-form on $\G_{\rho,\al}$, which is defined by the dual pairing of $\h$ and $\h^*$ (i.e., $\om(x_1,x_2)=\om(\xi_1,\xi_2)=0$ and $\om(x_1,\xi_1)=\xi_1(x_1)$, $\forall x_1,x_2\in\h$, $\forall \xi_1,\xi_2\in\h^*$).
\begin{proposition}\label{pr2}
The form $\om$ is symplectic for the Lie-algebra $\G_{\al,\rho}$ if and only if
\begin{eqnarray}\label{sym}
\al(x,y)(z)+\al(y,z)(x)+\al(z,x)(y)=0,\qquad\forall x,y,z\in\h.
\end{eqnarray}
\end{proposition}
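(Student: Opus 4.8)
The plan is to exploit that the two-form $\om$ is non-degenerate \emph{by construction}, so that being symplectic reduces to a single closedness condition. Indeed, $\om$ realizes the canonical dual pairing between $\h$ and $\h^*$, with each summand $\om$-isotropic: if $\om(x+\xi,\cdot)=0$ for some $x\in\h$, $\xi\in\h^*$, then pairing against an arbitrary $y\in\h$ gives $\xi(y)=0$ for all $y$, hence $\xi=0$, and pairing against an arbitrary $\eta\in\h^*$ gives $\eta(x)=0$ for all $\eta$, hence $x=0$. Thus $\om$ is automatically non-degenerate, and $\om$ is symplectic if and only if the Chevalley--Eilenberg cocycle condition (\ref{cocy}) holds, i.e. $\mathrm{d}\om=0$ on all triples of elements of $\G_{\rho,\al}=\h\oplus\h^*$.

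Next I would use trilinearity to reduce $\mathrm{d}\om(X,Y,Z)$ to the situation where each of $X,Y,Z$ lies either in $\h$ or in $\h^*$, and split into four cases according to the number of arguments taken from $\h$. When at least two arguments lie in $\h^*$, every summand in (\ref{cocy}) vanishes for purely structural reasons: $\h^*$ is an abelian ideal (so $[\xi,\eta]=0$), the mixed brackets $[x,\xi]=\rho(x)\xi$ of (\ref{bra}) again land in $\h^*$, and $\om$ restricted to $\h^*\times\h^*$ is identically zero; hence in these cases each term is $\om$ evaluated on two elements of $\h^*$. This simultaneously settles the all-$\h^*$ case and the case of two arguments in $\h^*$ and one in $\h$.

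The crucial case is two arguments in $\h$ and one in $\h^*$, say $\mathrm{d}\om(x,y,\xi)$. Expanding the three summands with the brackets (\ref{bra}) and the sign rule $\om(\eta,v)=-\eta(v)$ for $\eta\in\h^*$, $v\in\h$ (forced by antisymmetry from $\om(x_1,\xi_1)=\xi_1(x_1)$), one arrives at $\mathrm{d}\om(x,y,\xi)=\xi([x,y]_\h)-(\rho(y)\xi)(x)+(\rho(x)\xi)(y)$. I would then substitute the definition of the dual representation $(\rho(x)\xi)(v)=-\xi(x\cdot v)$, coming from left multiplication $\mathrm{L}$ in the left-symmetric algebra, together with the defining relation $[x,y]_\h=x\cdot y-y\cdot x$; a direct cancellation shows this expression is identically $0$, so this case imposes no constraint. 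I expect this to be the main obstacle, not because it is deep but because it is where all sign conventions (the dual representation, the pairing $\om$, and the LSA-to-bracket relation) must be pinned down consistently: the automatic vanishing here is precisely the compatibility built into the Lagrangian extension construction.

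Finally, for all three arguments in $\h$, expanding (\ref{cocy}) with (\ref{bra}) shows that only the $\h^*$-valued components $\al(x,y)$, $\al(y,z)$, $\al(z,x)$ of the brackets contribute, since the $\h$-valued parts $[\,\cdot\,,\cdot\,]_\h$ pair trivially against the remaining $\h$-argument. Using $\om(\eta,v)=-\eta(v)$ this yields $\mathrm{d}\om(x,y,z)=-\big(\al(x,y)(z)+\al(y,z)(x)+\al(z,x)(y)\big)$. Combining the four cases, $\mathrm{d}\om$ vanishes on every triple if and only if this last bracketed sum vanishes for all $x,y,z\in\h$, which is exactly condition (\ref{sym}), establishing the asserted equivalence.
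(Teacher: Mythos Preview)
Your proof is correct. The paper itself does not supply a proof of this proposition: it is stated as part of the background on symplectic Lagrangian reduction, with the results of that paragraph attributed to Baues--Cort\'es \cite{B-C}. Your argument is exactly the natural one and matches what one finds in that reference: non-degeneracy is automatic from the dual pairing, and the closedness condition $\mathrm{d}\om=0$ is checked by type on $\h\oplus\h^*$. The key observation, as you identify, is that the case with two arguments in $\h$ and one in $\h^*$ vanishes \emph{identically} precisely because $\rho$ is the dual of left multiplication in an LSA whose commutator is $[\,\cdot\,,\cdot\,]_\h$; this leaves only the all-$\h$ case, which produces condition~(\ref{sym}). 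Your sign bookkeeping (in particular $\om(\eta,v)=-\eta(v)$ and $(\rho(x)\xi)(v)=-\xi(x\cdot v)$) is consistent and the cancellations go through as you describe.
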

The symplectic Lie algebras $(\G,\om)$ which admit a Lagrangian ideal $\J$ can be constructed from the left symmetric algebra $(\h=\G/\J^\perp,\cdot)$.

We now construct for any left-symmetric Lie algebra $(\h, \cdot)$ a cohomology group, which describes all Lagrangian extensions of $\h$ with associated left-symmetric Lie algebra $(\h, \cdot)$.

First, we define Lagrangian one- and two-cochains on $\h$ as 
\begin{eqnarray*}
	\mathcal{C}^1_L(\h,\h^\ast)&=&\big\{\varphi\in\mathcal{C}^1(\h,\h^\ast) : \varphi(x)(y)-\varphi(y)(x)=0,~\text{for all}~ x,y\in\h\big\},\\
	\mathcal{C}^2_L(\h,\h^\ast)&=&\big\{\alpha\in\mathcal{C}^2(\h,\h^\ast) : \alpha~\text{satisfies}~(\ref{sym})\big\}.
\end{eqnarray*}
 Denote by $\partial_\rho = \partial_\rho^i$ the corresponding coboundary operators for cohomology with $\rho$-coefficients.

Let $Z^2_{L,\rho}(\h,\h^\ast)=\mathcal{C}^2_L(\h,\h^\ast)\cap Z^2_\rho(\h,\h^\ast)$  denote the space of Lagrangian
cocycles. We now define the Lagrangian extension cohomology group for the
left-symmetric Lie algebra $(\h,\cdot)$ as
\begin{center}
	$H^2_{L,\rho}(\h,\h^\ast)=\dfrac{Z^2_{L,\rho}(\h,\h^\ast)}{\partial_\rho\mathcal{C}^1_L(\h,\h^\ast)}$.
\end{center}
\begin{remark}\label{kernel}
	By construction there is a natural map from $H^2_{L,\rho}(\h,\h^\ast)$ to the ordinary Lie algebra cohomology group $H^2_\rho(\h, \h)$. Note that this map need not be injective, in general. The kernel $\kappa_L$ of the natural map
	\begin{equation}\label{map}
	H^2_{L,\rho}(\h,\h^\ast)\longrightarrow H^2_\rho(\h, \h^\ast)
	\end{equation}
	 is given by
	\begin{center}
		$\kappa_L=\dfrac{B^2_\rho(\h,\h^\ast)\cap Z^2_{L,\rho}(\h,\h^\ast)}{B^2_{L,\rho}(\h, \h^\ast) }$,
	\end{center}
	where $B^2_\rho(\h, \h^\ast) = \{\partial_\rho \alpha ~|~ \alpha \in\mathrm{Hom}(\h, \h^\ast)\}$ is the set of ordinary two-coboundaries with $\rho$-coefficients and $B^2_{L,\rho}(\h, \h^\ast) = \{\partial_\rho \alpha ~|~\alpha \in\mathcal{C}^1_L(\h, \h^\ast)\}$ is the set of two-coboundaries for Lagrangian extension cohomology.
\end{remark}

The following proposition will subsequently play an essential role in the classification of symplectic structures on eight-dimensional Frobeniusian  non-solvable  Lie algebras.
\begin{proposition}\textsc{\cite{B-C}}\label{pr 3}
There is a one-to-one correspondence between the classes of isomorphisms of symplectic Lie algebras which admit a Lagrangian ideal and the triples $(\h,\cdot, [\al])$, with  $\al\in H^2_{L,\rho}(\h,\h^\ast)$.
\end{proposition}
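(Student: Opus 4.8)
The plan is to establish the bijection by constructing explicit maps in both directions and checking they are mutually inverse at the level of isomorphism, respectively cohomology, classes. Throughout I work with a symplectic Lie algebra $(\G,\om)$ equipped with a Lagrangian ideal $\J$, so that $\J=\J^\perp$ and $\dim\J=\frac12\dim\G$, and I set $\h=\G/\J$. First I would record two structural facts. Using $d\om=0$ together with the fact that $\J$ is an isotropic ideal, for $\xi,\eta\in\J$ and any $z\in\G$ one has $\om([\eta,z],\xi)=\om([z,\xi],\eta)=0$, since both brackets land in $\J$ and $\J$ is isotropic; the cocycle identity (\ref{cocy}) then forces $\om([\xi,\eta],z)=0$ for all $z$, whence $[\xi,\eta]=0$ by non-degeneracy, i.e. $\J$ is abelian. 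Next, the pairing $\om_\h$ of Proposition \ref{pr 1} identifies $\J\cong\h^\ast$ and endows $\h$ with its left-symmetric product $\cdot$, whose left-multiplication representation has dual representation $\rho$ on $\h^\ast$. This produces the left-symmetric Lie algebra $(\h,\cdot)$ attached to $(\G,\om,\J)$.

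Next I would realize $(\G,\om)$ as a Lagrangian extension $\G_{\rho,\al}$. Since $\J$ is Lagrangian, standard symplectic linear algebra provides an isotropic vector-space complement $W$ with $\G=\J\oplus W$ and $W\cong\h$. Transporting the bracket through $W\cong\h$ and $\J\cong\h^\ast$, the $\h$-component of $[x,y]$ recovers $[x,y]_\h$, the mixed brackets $[x,\xi]$ give exactly $\rho(x)\xi$ (this is the point where one checks that $\rho$ is the dual of left multiplication), and the $\h^\ast$-component of $[x,y]$ defines a map $\al\colon\h\we\h\lr\h^\ast$. The Jacobi identity on $\G$ is then equivalent to $\al\in Z^2_\rho(\h,\h^\ast)$, while $d\om=0$ is equivalent to condition (\ref{sym}), i.e. $\al\in\mathcal{C}^2_L(\h,\h^\ast)$, by Proposition \ref{pr2}; together these say $\al\in Z^2_{L,\rho}(\h,\h^\ast)$, and we obtain $(\G,\om,\J)\cong(\G_{\rho,\al},\om)$ as in (\ref{bra}).

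The heart of the argument is showing that the class $[\al]$ is the correct invariant. I would compare two isotropic complements $W,W'$: they differ by a linear map $\varphi\colon\h\lr\h^\ast$, and requiring both $W$ and $W'$ to be isotropic for $\om$ forces $\varphi(x)(y)=\varphi(y)(x)$, i.e. $\varphi\in\mathcal{C}^1_L(\h,\h^\ast)$; a direct computation then shows the two resulting cocycles differ precisely by $\partial_\rho\varphi$. Hence $[\al]\in H^2_{L,\rho}(\h,\h^\ast)$ is independent of all choices, and $(\G,\om,\J)\mapsto(\h,\cdot,[\al])$ is well defined. Conversely, for any triple the construction (\ref{bra}) together with Proposition \ref{pr2} produces a symplectic Lie algebra $\G_{\rho,\al}$ with Lagrangian ideal $\h^\ast$ whose associated triple is again $(\h,\cdot,[\al])$, giving surjectivity; and an isomorphism of symplectic Lie algebras carrying one Lagrangian ideal to the other descends to an isomorphism of the quotient left-symmetric algebras and intertwines the cocycles modulo $\partial_\rho\mathcal{C}^1_L$, giving injectivity.

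The step I expect to be the main obstacle is exactly this last matching of equivalence relations: proving that the natural invariant lives in the \emph{restricted} group $H^2_{L,\rho}$ rather than in the ordinary $H^2_\rho$. The delicate point is that one is only allowed to modify the splitting by Lagrangian cochains $\mathcal{C}^1_L$ (symmetric $\varphi$), since non-symmetric changes would destroy isotropy of the complement and hence fail to preserve $\om$; controlling this, and correspondingly accounting for the residual ambiguity $\kappa_L$ of Remark \ref{kernel}, is what makes the correspondence genuinely one-to-one rather than merely surjective onto $H^2_\rho$.
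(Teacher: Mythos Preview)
The paper does not prove this proposition: it is quoted from \cite{B-C} and no argument is given beyond the surrounding definitions (the pairing $\om_\h$, the extension $(\ref{bra})$, the Lagrangian cocycle condition $(\ref{sym})$, and the cohomology group $H^2_{L,\rho}$). So there is nothing in the paper to compare your proof against beyond checking that your argument is the one the cited reference supplies.

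Your outline is precisely that argument: you (i) show $\J$ is abelian, (ii) identify $\J\cong\h^\ast$ and read off $(\h,\cdot)$ and $\rho$ via Proposition~\ref{pr 1}, (iii) pick an isotropic complement to obtain $\al\in Z^2_{L,\rho}$ via Proposition~\ref{pr2}, (iv) check that changing the isotropic complement modifies $\al$ by $\partial_\rho$ of a \emph{symmetric} $\varphi\in\mathcal{C}^1_L$, and (v) conclude the correspondence is bijective. This is exactly the construction and well-definedness argument from \cite{B-C}, and the point you flag as the main obstacle---that only Lagrangian one-cochains are admissible when changing the splitting, which is why the invariant lives in $H^2_{L,\rho}$ rather than $H^2_\rho$---is indeed the crux there. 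One small caveat worth making explicit: as stated, the bijection is really between isomorphism classes of \emph{triples} $(\G,\om,\J)$ (symplectic Lie algebra together with a choice of Lagrangian ideal) and isomorphism classes of $(\h,\cdot,[\al])$; the paper only ever applies the result in situations where $\J$ is unique (Corollary~\ref{co1}), so the distinction is invisible in practice, but your injectivity step correctly requires the isomorphism to carry one Lagrangian ideal to the other.
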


\textbf{Non-solvable six-dimensional case}: The non-solvable six-dimensional Lie algebras are classified by Turkowski \cite{T}, up to isomorphism, into four Lie algebras.
\[\mL_{6,3}=\msl\ltimes A_{3,3},\quad\mL_{6,1}=\mso\ltimes3\mL_1,\quad
	\mL_{6,2}=\msl\ltimes A_{3,1},\quad	
	\esp \mL_{6,4}=\msl\ltimes 3\mL_1 
	.\]
The last three Lie algebras do not support symplectic structures (e.g. they satisfy $[\G,\G]=\G$).
The first Lie algebra $\mL_{6,3}=\msl\ltimes A_{3,3}$ is the Lie algebra of affine transformation  $\aff$ which admits a unique symplectic structure (see \cite{A}). In the sequel, the symplectic Lie algebra $(\aff,\om)$ is represented by $\aff=\langle e_1,\ldots,e_6\rangle$ with the brackets
\[[e_1,e_2]=2e_2,\, [e_1,e_3]=-2e_3,\, [e_1,e_4]=e_4,\, [e_1, e_5]=-e_5, \,[e_2,e_3]=e_1\]\[ [e_2,e_5] = e_4,\, [e_3,e_4] = e_5,\, [e_4,e_6] = e_4,\,[e_5,e_6] = e_5,\]
the unspecified brackets are either zero or given by antisymmetry.
The symplectic form is given by
\[\om=e^{12}+e^{15}-e^{34}-e^{56}.\]
For more details on properties symplectic structure on Lie algebra of affine transformation $\aff$, see \cite{B-M}.

Any symplectic Lie algebra of dimension less than or equal to six is either solvable or Lie algebra of affine transformation $\aff$. We can now deduce the following statement for eight-dimensional symplectic non-solvable decomposable Lie algebras. 

\begin{proposition}\label{pr 0}
	Let $\G$ be an eight-dimensional symplectic non-solvable decomposable Lie algebra. Then $\G$ is symplectomorphically equivalent to one of the following Lie algebras equipped with a symplectic form as follows$:$
		\begin{align*}
\aff\oplus\mathfrak{aff}(\R),&\qquad\om=e^{12}+e^{15}-e^{34}-e^{56}+e^{78},\\	
\aff\oplus\R^2,&\qquad\om=e^{12}+e^{15}-e^{34}-e^{56}+e^{78}.
	\end{align*}
\end{proposition}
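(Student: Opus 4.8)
The plan is to reduce the problem to the six-dimensional classification already quoted, by splitting off the non-solvable part of $\G$ as a direct summand and identifying it with $\aff$. First I would write $\G$ as a direct sum of indecomposable ideals $\G=\G_1\oplus\cdots\oplus\G_k$ with $k\ge 2$. Feeding the appropriate triples into the cocycle identity \eqref{cocy} gives $\om([\G_i,\G_i],\G_j)=0$ for $i\neq j$; in particular, if some summand were semisimple (hence perfect) it would be $\om$-orthogonal to the sum of the others, so $\om$ would restrict to a symplectic form on it, contradicting item $1$ of the Introduction. Thus no summand is semisimple. Since $\G$ is non-solvable, at least one summand is non-solvable, and being non-semisimple and indecomposable it has dimension at least five; as two such summands would already exceed dimension eight, there is exactly one non-solvable summand $\G_1$, while the remaining ones assemble into a solvable ideal $\G_2$, with $\dim\G_1+\dim\G_2=8$ and $\dim\G_1\in\{5,6,7\}$.

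Next I would pin down the dimensions. The same cocycle computation shows $\om$ vanishes on $[\G_1,\G_1]\times\G_2$ and on $\G_1\times[\G_2,\G_2]$, so the radical $K_i$ of $\om|_{\G_i}$ meets $[\G_i,\G_i]$ trivially and injects into the abelianisation $\G_i/[\G_i,\G_i]$; a short linear-algebra check then shows that $\om$ induces a non-degenerate pairing between $K_1$ and $K_2$, whence $\dim K_1=\dim K_2$ and each $\dim\G_i$ has the same parity as this common number. The odd possibilities are excluded directly: for $\dim\G_1=5$ the only candidate $\msl\ltimes\R^2$ is perfect, forcing $K_1=0$ and making $\om|_{\G_1}$ a symplectic form on an odd-dimensional space; for $\dim\G_1=7$ one has $\G_2=\R$ central and $\om|_{\G_1}$ a corank-one closed $2$-form with radical outside $[\G_1,\G_1]$, which a short inspection of the seven-dimensional non-solvable Lie algebras (equivalently an index computation via \eqref{ind}) rules out. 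Hence $\dim\G_1=6$ and $\om|_{\G_1}$ is non-degenerate, so by the quoted six-dimensional result $\G_1\cong\aff$; consequently $\dim\G_2=2$, and $\G_2$ is either $\R^2$ or $\mathfrak{aff}(\R)$, both of which are symplectic.

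Finally I would normalise the form. Writing $e_6$ for the generator of $\aff/[\aff,\aff]$, the vanishing above confines the cross term to $\langle e_6\rangle\times(\G_2/[\G_2,\G_2])$, so it is a multiple of $e^{67}$ (together with $e^{68}$ when $\G_2=\R^2$). When $\G_2=\R^2$ is abelian the cross term is removed outright: the shift $e_6\mapsto e_6+y_0$ with $y_0\in\G_2$ is a Lie-algebra automorphism of $\G$, and since $\om|_{\G_2}$ is non-degenerate one can choose $y_0$ to cancel $\om(e_6,\cdot)|_{\G_2}$, leaving the split form $e^{12}+e^{15}-e^{34}-e^{56}+e^{78}$.

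The hard part is the case $\G_2=\mathfrak{aff}(\R)$. Here $Z(\G_2)=0$ obstructs the shift of $e_6$, and, because the eigenvalue of $e_6$ on the translation part of $\aff$ is pinned to $1$, every automorphism of $\aff$ acts trivially on $\aff/[\aff,\aff]$, so the coefficient of $e^{67}$ cannot be scaled away by a product automorphism. This cross-term normalisation is the main obstacle: I expect to settle it either by exhibiting a symplectomorphism that genuinely mixes the two factors, or — in line with the paper's strategy — by passing to the associated left-symmetric quotient and showing through the Lagrangian-extension cohomology $H^2_{L,\rho}(\h,\h^\ast)$ that the parameter is a coboundary, after which the form again reduces to $e^{12}+e^{15}-e^{34}-e^{56}+e^{78}$.
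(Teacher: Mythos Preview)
Your reduction to $\G\cong\aff\oplus\G_2$ with $\dim\G_2=2$ is sound and considerably more explicit than what the paper offers: there the entire argument for Proposition~\ref{pr 0} is the single sentence preceding it, which invokes the six-dimensional classification and then asserts ``We can now deduce the following statement'', without ever addressing the cross term $e^{67}$.

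The obstacle you isolate for $\G_2=\mathfrak{aff}(\R)$ is genuine, and neither of the two escape routes you sketch can close it. First, there is no automorphism mixing the factors: the perfect ideal $\G^{(2)}=\langle e_1,\dots,e_5\rangle$ is characteristic, its centraliser in $\G$ is exactly $\G_2=\langle e_7,e_8\rangle$, and the centraliser of $\G_2$ is exactly $\G_1$; hence both summands are characteristic and $\mathrm{Aut}(\G)=\mathrm{Aut}(\aff)\times\mathrm{Aut}(\mathfrak{aff}(\R))$. Second, your own trace observation already pins the induced action on the abelianisations: every $\phi_1\in\mathrm{Aut}(\aff)$ satisfies $\phi_1^*e^6=e^6$ and every $\phi_2\in\mathrm{Aut}(\mathfrak{aff}(\R))$ satisfies $\phi_2^*e^7=e^7$, so $\phi^*e^{67}=e^{67}$ for \emph{every} $\phi\in\mathrm{Aut}(\G)$. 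Equivalently, $H^2(\G)=\R\,[e^{67}]$ by K\"unneth and $\mathrm{Aut}(\G)$ acts trivially on it; since $\om_0$ is exact while $\om_0+c\,e^{67}$ is not for $c\neq0$, no Lagrangian-extension coboundary can intervene either. A direct check shows that
\[
\om_c=e^{12}+e^{15}-e^{34}-e^{56}+c\,e^{67}+e^{78}
\]
is closed and non-degenerate for every $c\in\R$, and the coefficient $c$ is a symplectomorphism invariant by the above. The difficulty you flagged is therefore not a gap in your proof but in the assertion itself: on $\aff\oplus\mathfrak{aff}(\R)$ the symplectic structures form a one-parameter family, and the uniqueness claimed in Proposition~\ref{pr 0} (and carried into Theorem~\ref{the1}) fails in that case.
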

In what follows, the Lie algebras are indecomposable.

\section{ Eight dimensional symplectic non-solvable Lie algebras}
In this section, we detect among the $22$ indecomposable non-solvable  Lie algebras listed by Turkowski \cite{T} those which are symplectic (Frobeniusian or not Frobeniusian) and those which do not support a symplectic structure.
\begin{lemma}\label{Lem1}
                The following Lie algebras
        \[\mL_{8,1}^p,\;\mL_{8,2},\;\mL_{8,4}^0,\;\mL_{8,5},\;\mL_{8,6},\;\mL^p_{8,8},\;\mL^{p,q\neq0}_{8,9},\;\mL_{8,10}^p,\;\mL_{8,11},\;\mL_{8,12}^{p},\]
        \[\mL_{8,13}^{\epsilon=\pm1},\;\mL_{8,14},\;\mL_{8,15},\;\mL^{-1}_{8,17},\;\mL^0_{8,18},\;\mL_{8,19},\;\mL_{8,21},\;\mL_{8,22},\]
do not support a symplectic structure $($with $p\not=0)$.
        \end{lemma}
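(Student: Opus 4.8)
The plan is to lean on two standing obstructions recalled in the introduction: by item $3$ of the list from \cite{C} a \emph{non-solvable unimodular} Lie algebra admits no symplectic form, and by item $1$ a \emph{perfect} Lie algebra ($[\G,\G]=\G$) admits none either. The first preparatory step is to reduce unimodularity of $\G=N\ltimes R$ to unimodularity of its radical. Splitting $\G=N\oplus R$ as a vector space, for $x\in R$ the operator $\ad(x)$ carries $N$ into $R$ (since $R$ is an ideal) and $R$ into $R$, so it is block lower triangular with zero $N$-block and $\tr\ad_\G(x)=\tr\ad_R(x)$; for $x\in N$ the action on $N$ is traceless and the action on $R$ is a representation $\rho$ of a semisimple algebra, hence $\tr\rho(x)=0$ because every $x\in N=\mso$ or $N=\msl$ is a sum of brackets and $\tr\rho([a,b])=\tr[\rho(a),\rho(b)]=0$. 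Thus $\G$ is unimodular if and only if its radical $A_{5,\ast}$ is, and in that case $\G$ is not symplectic.

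The second step is bookkeeping against Turkowski's explicit brackets \cite{T}: for each entry I would compute $\tr\ad_R$ on a basis of the five-dimensional radical and read off unimodularity. I expect the isolated special parameters that separate these algebras from the symplectic members of the same family -- for instance the value $p=-1$ singling out $\mL^{-1}_{8,17}$ out of the $\mL_{8,17}$ family, or the vanishing parameters in $\mL^0_{8,4}$ and $\mL^0_{8,18}$ -- to be exactly the loci where the radical's trace vanishes, so that item $3$ applies. Any algebra with $[\G,\G]=\G$ is instead dispatched immediately by item $1$.

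A genuine subfamily, however, is not caught by either criterion; the clearest example is $\mL^{p,q\neq0}_{8,9}$, which is non-symplectic on an open two-parameter region rather than on a single trace hypersurface, so unimodularity cannot be the reason. For these I would pass to all closed $2$-forms, not only exact ones, via the refinement that if $H^2(\G)=0$ then every symplectic form is exact and $\G$ is symplectic precisely when it is Frobenius, i.e.\ $\ind_\G=0$. Using the Hochschild--Serre mechanism for $\G=N\ltimes R$ with $N$ semisimple, $H^2(\G)$ is governed by the $N$-invariant cohomology of $R$; for the relevant algebras I would establish $H^2(\G)=0$ and then invoke $(\ref{ind})$, computing $\ind_\G=\dim\G-\rank_{R(\G)}(\M_\G)$ and verifying that the generic Kirillov matrix $\M_\G$ drops rank over $R(\G)$, so that $\ind_\G>0$ and no exact, hence no, symplectic form exists. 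In the few cases where $H^2(\G)\neq0$ I would instead solve the cocycle condition $(\ref{cocy})$ directly for a general $\om=\sum_{i<j}a_{ij}e^{ij}$ and exhibit a single vector lying in the kernel of the associated antisymmetric matrix for every admissible choice of the $a_{ij}$, forcing its determinant to vanish.

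The main obstacle is precisely this last step: ruling out \emph{non-exact} symplectic forms. The trace reduction and the perfectness check are routine, but for the algebras surviving them one must either prove cleanly that $H^2(\G)$ vanishes -- collapsing the problem to the index -- or produce a common kernel vector valid across the entire cocycle space. Locating that distinguished vector, which I expect to sit inside the nilradical or the centre of $[\G,\G]$, and confirming that the cocycle constraints indeed trap it for all parameter values, is where the real work of the lemma lies.
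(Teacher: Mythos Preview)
Your plan coincides with the paper's proof: the list splits into twelve unimodular algebras, eliminated by item~3 of \cite{C}, and the six non-unimodular families $\mL_{8,1}^p,\,\mL_{8,8}^p,\,\mL_{8,9}^{p,q\neq0},\,\mL_{8,10}^p,\,\mL_{8,11},\,\mL_{8,12}^p$, each of which has $H^2(\G)=0$ and, by a direct application of $(\ref{ind})$, index equal to~$2$. Your fallback for a possible case $H^2(\G)\neq0$ is therefore never needed here, and neither is the perfectness criterion --- unimodularity already disposes of the entire first block.
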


\begin{proof}
        We distinguish between two types of Lie algebras:
\begin{enumerate}
        \item The first type:
        \[\mL_{8,2},\;\mL_{8,4}^0,\;\mL_{8,5},\;\mL_{8,6},\;\mL^{\epsilon=\pm1}_{8,13},\;\mL_{8,14},\;\mL_{8,15},\;\mL^{-1}_{8,17},\;\mL_{8,18}^0,\;\mL_{8,19},\;\mL_{8,21},\;\mL_{8,22},\]
        which are unimodular ($\tr(\ad_x)=0$) and non-solvable therefore
cannot be symplectic.
        \item The second type:
        \[\mL^p_{8,1},\;\mL_{8,8}^p,\;\mL^{p,q\neq0}_{8,9},\;\mL^p_{8,10},\;\mL_{8,11},\;\mL^p_{8,12},\]
        which have  trivial second  cohomology group (i.e., $H^2(\G)=0$),
if this kind of Lie algebras supports symplectic structures they will
be exact. The problem of the non-existence of the symplectic structure
therefore boils down to the calculation of the index.   Indeed, a direct
calculation using the formula (\ref{ind}) gives us that the index of
the second type Lie algebras is two.\end{enumerate}
\end{proof}

\begin{proposition}\label{proforsym}
	
	Eight-dimensional  non-solvable Lie algebras that support a symplectic structure are isomorphic to one of the following Lie algebras
       \[\mL_{8,3},\;\mL^{p\not=0}_{8,4},\;\mL^{p,-p\neq0}_{8,7},\;\mL^0_{8,8},\;\mL^{0,q\neq0}_{8,9},\;\mL_{8,16},\;\mL^{p\in]-1,1]}_{8,17},\;\mL_{8,18}^{p>0},\;\mL_{8,20}.\]
In addition, 
 \begin{itemize} 	
 \item The Lie algebras $\mL_{8,3},\;\mL^{p\not=0}_{8,4},\;\mL_{8,16},\;\mL^{p\in]-1,1]}_{8,17},\;\mL_{8,18}^{p>0},\;\mL_{8,20}$ are Frobeniusian.   Further admitting  only exact symplectic structures $($except for $\mL^{0}_{8,17})$.
 \item The Lie algebras      $\mL^{p,-p\neq0}_{8,7},\;\mL^0_{8,8},\;\mL^{0,q\neq0}_{8,9},$
are not Frobeniusian.
\end{itemize}           
\end{proposition}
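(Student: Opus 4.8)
The plan is to prove the three assertions of the proposition separately, using as the main tool the index characterization recorded just after formula \eqref{ind}, namely that $\ind_\G=0$ precisely when $\G$ is Frobenius, together with the non-existence results already secured in Lemma \ref{Lem1}.

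First I would determine which algebras are symplectic. Of Turkowski's $22$ indecomposable non-solvable families, Lemma \ref{Lem1} eliminates $18$, so a symplectic one can only be among $\mL_{8,3}$, $\mL^{p\neq0}_{8,4}$, $\mL^{p,-p\neq 0}_{8,7}$, $\mL^0_{8,8}$, $\mL^{0,q\neq0}_{8,9}$, $\mL_{8,16}$, $\mL^{p\in]-1,1]}_{8,17}$, $\mL_{8,18}^{p>0}$ and $\mL_{8,20}$. The admissible parameter ranges are exactly what survives after removing the unimodular and $H^2=0$ sub-cases treated in Lemma \ref{Lem1} and after applying the isomorphisms that identify Turkowski's parameters (so that, for instance, only the range $]-1,1]$ need be retained for $\mL_{8,17}$, and only $p>0$ for $\mL_{8,18}$). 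To show each of the nine is genuinely symplectic I would, for each, solve the linear cocycle system \eqref{cocy} to describe the space $Z^2(\G)$ of closed two-forms and then exhibit a non-degenerate element of it, concretely the form listed for it in Theorem \ref{the1}; non-degeneracy is checked by verifying that the fourth exterior power $\om\we\om\we\om\we\om$ is a nonzero volume form, equivalently that the Pfaffian of its coefficient matrix is nonzero.

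Next, to sort the nine into Frobeniusian and non-Frobeniusian, I would compute $\ind_\G$ for each from \eqref{ind}, that is, by forming the skew matrix $\M_\G=([x_i,x_j])$ over the quotient field $R(\G)$ and finding its generic rank. For $\mL_{8,3}$, $\mL^{p\neq0}_{8,4}$, $\mL_{8,16}$, $\mL^{p\in]-1,1]}_{8,17}$, $\mL_{8,18}^{p>0}$, $\mL_{8,20}$ the rank is full, so $\ind_\G=0$ and these are Frobenius; one double-checks this by producing a one-form $\beta\in\G^\ast$ with $\mathrm d\beta$ equal to the listed symplectic form, thereby exhibiting an exact symplectic structure. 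For $\mL^{p,-p\neq 0}_{8,7}$, $\mL^0_{8,8}$, $\mL^{0,q\neq0}_{8,9}$ the generic rank drops to $6$, so $\ind_\G=2>0$; these carry no exact symplectic form, hence are not Frobeniusian, yet the (necessarily non-exact) two-forms found above show they are still symplectic.

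Finally, for the ``only exact'' clause I would compute the scalar Chevalley--Eilenberg cohomology $H^2(\G)$ of the six Frobeniusian families. When $H^2(\G)=0$ every closed two-form is a coboundary, so every symplectic structure is exact; this disposes of $\mL_{8,3}$, $\mL^{p\neq0}_{8,4}$, $\mL_{8,16}$, $\mL_{8,18}^{p>0}$, $\mL_{8,20}$ and of $\mL_{8,17}$ with $p\neq0$, whereas the exceptional $\mL^0_{8,17}$ has $H^2(\G)\neq0$, which is exactly why a non-exact symplectic form appears for it alongside the exact one in Theorem \ref{the1}. The main obstacle throughout is the symbolic linear algebra: the generic rank of the $8\times8$ polynomial matrix $\M_\G$ must be pinned down for each parameter family, since this fixes the index and hence the Frobeniusian/non-Frobeniusian split, and the groups $H^2(\G)$ must be computed; these steps are lengthy rather than conceptually deep, and I would relegate the detailed computations to the appendix.
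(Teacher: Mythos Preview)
Your proposal is essentially correct and follows the same strategy as the paper: use the index to decide the Frobeniusian/non-Frobeniusian split, use the vanishing of $H^2(\G)$ to conclude that the Frobeniusian algebras (other than $\mL^0_{8,17}$) admit only exact symplectic forms, and exhibit explicit closed non-degenerate two-forms for the remaining cases. One small inaccuracy: Lemma~\ref{Lem1} does not by itself cut the list down to exactly the nine families you name, since it says nothing about $\mL_{8,7}^{p,q}$ for general $(p,q)$; the restriction $q=-p$ (and similarly $p=0$ for $\mL_{8,9}^{p,q}$) only emerges from the cocycle computation itself, as the paper notes in the remark following the proposition, so you should present this constraint as an output of your $Z^2(\G)$ analysis rather than as a consequence of Lemma~\ref{Lem1} or of parameter isomorphisms.
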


\begin{proof}
For Lie algebras
\[\mL_{8,3},\;\mL^{p\not=0}_{8,4},\;\mL^{p,-p\neq0}_{8,7},\;\mL_{8,16},\;\mL^{p\in]-1,1]\setminus\{0\}}_{8,17},\;\mL_{8,18}^{p>0},\;\mL_{8,20}.\]
On the one hand, their second cohomology group is zero, so any symplectic
structure is exact, and on the other hand their index is zero.

For the following Lie algebras
\[\mL^{p,-p\neq0}_{8,7},\;\mL^0_{8,8},\;\mL^{0,q\neq0}_{8,9},\;\mL^0_{8,17},\]
by studying case by case (see Proposition \ref{pr5} ) and finding their
symplectic structures. In addition, note that the index of Lie algebras 
\[\mL^{p,-p\neq0}_{8,7},\;\mL^0_{8,8},\;\mL^{0,q\neq0}_{8,9}\]
 is not zero, these symplectic structures are therefore not Frobeniusian.
\end{proof}
\begin{remark}
After a short calculation we obtain that, the Lie algebra $\mL_{8,7}^{p,q\neq0}$ has a symplectic structure if and only if $q=-p$. In the same way, $p=0$ for Lie algebra $\mL_{8,9}^{p,q\neq0}$.
\end{remark}
\begin{remark}
The Lie algebra $\mL^0_{8,17}$ is the only Frobeniusian Lie algebra with non-trivial second cohomology group.	
\end{remark}	

\begin{proposition}\label{pr5} Let $(\G,\om)$ be an eight-dimensional   symplectic indecomposable non-solvable
 real Lie algebra. Then
        $(\G,\om)$ is isomorphic to one of the following symplectic Lie algebras:
 \begin{enumerate}     
        \item[$\bullet$] $\mL_{8,3}=\mso\ltimes A_{5,7}^{1,1,1}$.\quad For~
 $a^2_{26} + a^2_{34} + a^2_{36} + a^2_{37}\not=0$,
                \begin{align*}
        \om&=a_{12}e^{12}+a_{13}e^{13}+a_{23}e^{23}
        +a_{26} (e^{14} + e^{26}-e^{35} + 2e^{78} )
        +a_{34} (e^{15}-e^{27} + e^{34} + 2e^{68} )\\
        &+a_{36} (e^{17} + e^{25} + e^{36} - 2e^{48})
        +a_{37} (-e^{16} + e^{24} + e^{37} + 2e^{58}).
        \end{align*}
 \item[$\bullet$] $\mL^{p\not=0}_{8,4}=\mso\ltimes
A_{5,17}^{1,p,p}$.\quad For~ $a_{34}^2 + a_{35}^2 + a_{36}^2 +
a_{37}^2\neq 0$,
 \begin{align*}
 \om&=a_{12}e^{12}+a_{13}e^{13}+a_{23}e^{23}+a_{34}(e^{15}-e^{27}+e^{34}-2e^{48}+2pe^{68})\\&+a_{35}(-e^{14}-e^{26}+e^{35}+2e^{58}-2pe^{78})+a_{36}(e^{17}+e^{25}+e^{36}-2pe^{48}-2e^{68})\\
 &+a_{37}(-e^{16}+e^{24}+e^{37}+2pe^{58}+2e^{78}).
 \end{align*}
 \item[$\bullet$] $\mL^{p,-p\neq 0}_{8,7}=\msl\ltimes A^{1,p,-p}_{5,7}.$\quad For~ $a_{67}(a_{12}a_{58}^2 + a_{13}a_{48}^2 + 2\,a_{23}a_{48}a_{58})^2\neq 0$,
   \begin{align*}
 \om &=a_{12}e^{12}+a_{13}e^{13}+a_{23}e^{23}+a_{67}e^{67}+a_{68}e^{68}+a_{78}e^{78}+a_{48}(e^{14}+e^{25}+e^{48})\\&+a_{58}(-e^{15}+e^{34}+e^{58}).
\end{align*}
 \item[$\bullet$] $\mL^0_{8,8}=\msl\ltimes A^{1}_{5,8}.$ \quad For~ $a_{6 7}(a_{1 2}a^2_{3 4} + a_{1 3}a^2_{2 5} + 2a_{2 3}a_{2 5}a_{3 4})\neq 
0$,
 \begin{align*}
  \om &=a_{12}e^{12}+a_{13}e^{13}+a_{23}e^{23}+a_{67}e^{67}+a_{68}e^{68}+a_{78}e^{78}+a_{25}(e^{14}+e^{25}+e^{48})\\&+a_{34}(-e^{15}+e^{34}+e^{58}).
\end{align*}
 \item[$\bullet$] $\mL^{0,q\neq 0}_{8,9}=\msl\ltimes A^{1,0,q}_{5,13}.$\quad For~ $a_{6 7}(a_{1 2}a^2_{5 8} + a_{1 3}a^2_{4 8} + 2a_{2 3}a_{4 8}a_{5 8})\neq 0$,
 \begin{align*}
  \om&=a_{12}e^{12}+a_{13}e^{13}+a_{23}e^{23}+a_{67}e^{67}+a_{68}e^{68}+a_{78}e^{78}+a_{48}(e^{24}+e^{25}+e^{48})\\&
+a_{58}(-e^{15}+e^{58}+e^{34}). 
 \end{align*} 
 \item[$\bullet$] $\mL_{8,16}=\msl\ltimes A^{1}_{5,15}.$\quad For~ $a_{25}a_{36}-a_{27}a_{34}\neq 0$,
 \begin{align*}
 \om&=a_{12}e^{12}+a_{13}e^{13}+a_{23}e^{23}+a_{25}(e^{14}+e^{25}+e^{48}+e^{68})+a_{27}(e^{16}+e^{27}+e^{68})&\\&+a_{34}(-e^{15}+e^{34}+e^{58}+e^{78})+a_{36}(-e^{17}+e^{36}+e^{78}).
 \end{align*} 
 \item[$\bullet$] $\mL^{p=0}_{8,17}=\msl\ltimes A^{1,0,0}_{5,7}.$~ For~  $(a_{13}a_{67} - a^2_{36})a^2_{25}+ a^2_{3 4}(a_{1 2}a_{6 7} - a^2_{2 7}) + 2a_{3 4}(a_{23}a_{6 7} + a_{27}a_{3 6})a_{2 5}\neq 0$,
 \begin{align*}
 \om&= a_{12}e^{12}+ a_{13}e^{13}+a_{23}e^{23}+a_{27}e^{27}
+a_{36}e^{36}+a_{67}e^{67}+a_{27}(e^{16}+e^{27})+a_{25}(e^{14}+e^{25}+e^{48})\\&
+a_{34}(-e^{15}+e^{34}+e^{58})+a_{36}(-e^{17}+e^{36}).
 \end{align*} 
 \item[$\bullet$] $\mL^{p\in]-1,1]\setminus\{0\}}_{8,17}=\msl\ltimes A^{1,p,p}_{5,7}.$\quad For~ $a_{14}a_{17} - a_{15}a_{16}\neq 0,$
 \begin{align*}
 \om&=a_{12}e^{12}+a_{13}e^{13}+a_{14}(e^{14}+e^{25}+e^{48})+a_{15}(e^{15}-e^{34}-e^{58})+a_{16}(e^{16}+e^{27}+pe^{68})\\&
+a_{17}(e^{17}-e^{36}-pe^{78}).
\end{align*}
 \item[$\bullet$] $\mL_{8,18}^{p>0}=\msl\ltimes A^{1,p,p}_{5,17}.$ \quad For ~$a_{25}a_{36}- a_{27}a_{34}\neq 0$,
 \begin{align*}
\om&=a_{12}e^{12}+a_{13}e^{13}+a_{23}e^{23}+a_{25}(e^{14}+e^{25}+pe^{48}+e^{68})+a_{34}(-e^{15}+e^{34}+pe^{58}+e^{78})\\&
+a_{27}(e^{16}+e^{27}-e^{48}+pe^{68})+a_{36}(-e^{17}+e^{36}-e^{58}+pe^{78}). 
 \end{align*} 
 \item[$\bullet$] $\mL_{8,20}=\msl\ltimes A^{1,1,1}_{5,7}.$\quad For~$a^2_{15}(4a_{15}a_{36} - 3a^2_{16})+ a^2_{25}a^2_{36}+ a_{16}a_{25}(6a_{15}a_{36} - 4a^2_{16}) \neq 0.$
\begin{align*}
\om&= a_{12}e^{12}+a_{13}e^{13}+a_{23}e^{23}+a_{15}(e^{15}+2e^{26}+e^{34}+e^{58})+a_{16}(e^{16}-e^{27}-2e^{35}-e^{68})\\&
+a_{25}(e^{14}+e^{25}+\frac{1}{3} e^{48})+a_{36}(-e^{17}+e^{36}+\frac{1}{3} e^{78}).
\end{align*}
 \end{enumerate}       
\end{proposition}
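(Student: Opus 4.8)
The plan is to argue case by case over the nine Lie algebras singled out in Proposition \ref{proforsym}, which are the only indecomposable non-solvable eight-dimensional Lie algebras admitting a symplectic form (the remaining families having been excluded in Lemma \ref{Lem1}); the new content here is the explicit description of \emph{all} symplectic forms on each of them. For each such $\G$ I fix the basis $\{e_1,\dots,e_8\}$ in which Turkowski records the structure constants (reproduced in the Appendix) and pass to the dual side: writing $[e_i,e_j]=\sum_k c_{ij}^k e_k$, the Chevalley--Eilenberg differential acts by $\mathrm{d}e^k=-\sum_{i<j}c_{ij}^k e^{ij}$, so that for the generic $2$-form $\om=\sum_{i<j}a_{ij}e^{ij}$ the cocycle condition (\ref{cocy}) translates into a homogeneous linear system in the coefficients $a_{ij}$.

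The first step is to solve that linear system and produce a basis of the closed $2$-forms $Z^2(\G)$. For the six Frobeniusian cases $\mL_{8,3}$, $\mL^{p\neq0}_{8,4}$, $\mL_{8,16}$, $\mL^{p\in]-1,1]}_{8,17}$, $\mL^{p>0}_{8,18}$, $\mL_{8,20}$ the vanishing of $H^2(\G)$ (invoked already in Proposition \ref{proforsym}) forces $Z^2(\G)=B^2(\G)$, so every closed form is automatically exact and the task reduces to identifying the surviving free parameters — the $a_{12},a_{13},a_{23},\dots$ displayed in each formula — together with the couplings that closedness imposes between the Levi-part and the radical-part coefficients. For the non-Frobeniusian algebras $\mL^{p,-p\neq0}_{8,7}$, $\mL^0_{8,8}$, $\mL^{0,q\neq0}_{8,9}$ and for the exceptional $\mL^0_{8,17}$ the index is positive and $Z^2(\G)$ strictly contains $B^2(\G)$ (extra parameters such as $a_{67},a_{68},a_{78}$ now appear); here I solve the same linear system and record the full closed form, carefully retaining the non-exact directions.

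The second step is non-degeneracy: a closed $\om$ is symplectic precisely when $\om^{\wedge 4}\neq0$, equivalently when the skew matrix $(a_{ij})$ is invertible, i.e. its Pfaffian does not vanish. Computing and factoring this Pfaffian on each parametrized family reproduces exactly the polynomial inequalities recorded in the proposition, ranging from the quadratic condition $a_{26}^2+a_{34}^2+a_{36}^2+a_{37}^2\neq0$ for $\mL_{8,3}$ to the higher-degree conditions in the $\msl$-cases. I expect the bookkeeping to be the main obstacle: the cocycle systems couple many of the $a_{ij}$, the Pfaffian factorizations are delicate, and one must verify both that the displayed $\om$ genuinely spans the \emph{whole} non-degenerate locus and that no symplectic class is overlooked — the most sensitive point being $\mL^0_{8,17}$, the lone Frobeniusian algebra with $H^2(\G)\neq0$, where exact and non-exact symplectic forms coexist and must both be accounted for.
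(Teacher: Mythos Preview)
Your approach is correct and coincides with the paper's own proof: for each Lie algebra one solves the linear cocycle system $\mathrm{d}\omega=0$ to parametrize $Z^2(\G)$, then imposes $\wedge^4\omega\neq0$ (equivalently, the nonvanishing of the Pfaffian) to single out the symplectic locus, yielding exactly the displayed forms and inequalities. The paper's proof is stated more tersely but follows precisely this two-step case-by-case computation.
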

\begin{proof}
	The proof follows by working on each Lie algebra. We first compute the 2-cocycles, (i.e., the 2-forms which verifies (\ref{cocy})) the next step is to compute the rank of $\om$. If $\om$ has maximal rank, that is, $\wedge^4\omega\neq 0$ then $\G$ will be endowed with a symplectic structure.
\end{proof}

\begin{remark}
	\begin{enumerate}
		\item Every eight-dimensional non-solvable symplectic Lie algebra has a trivial center, which is not true in the general case. For example, the center of the Lie algebra $\mL_{8,2}$ is not trivial.
		\item  Every eight-dimensional symplectic non-solvable   Lie algebra has the subalgebra $\J=\langle e_4,e_5,e_6,e_7\rangle$ as an  abelian ideal,  which is not true in the general case, see for example $\mL_{8,2}$. 
		\item For the Lie algebras $\mL^{p,-p\neq0}_{8,7},\;\mL^0_{8,8},\;\mL^{0,q\neq0}_{8,9}$ and $\mL^0_{8,17}$ the subalgebra $\mathfrak{a}=\langle e_6,e_7\rangle$ is an abelian ideal.
	\end{enumerate}	
\end{remark}
\begin{corollary}\label{co1}
 For all  eight-dimensional   non-solvable Frobeniusian Lie algebras, $\J=\langle e_4,e_5,e_6,e_7\rangle$  is a unique Lagrangian ideal.
\end{corollary}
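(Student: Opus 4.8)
The plan is to treat existence and uniqueness separately for the six Frobeniusian algebras of Proposition \ref{proforsym}, namely $\mL_{8,3}$, $\mL^{p\neq0}_{8,4}$, $\mL_{8,16}$, $\mL^{p\in]-1,1]}_{8,17}$, $\mL_{8,18}^{p>0}$ and $\mL_{8,20}$, which are indecomposable. For existence, recall from the preceding remark that $\J=\langle e_4,e_5,e_6,e_7\rangle$ is a four-dimensional abelian ideal; as $\dim\G=8$, it suffices to see that $\J$ is isotropic, i.e. that $\om(e_i,e_j)=0$ for all $i,j\in\{4,5,6,7\}$. This I would read off directly from the normal forms in Proposition \ref{pr5}: in every case the expansion of $\om$ contains no monomial $e^{ij}$ with both indices in $\{4,5,6,7\}$, so $\om$ vanishes on $\J\times\J$ and $\J$ is a maximal isotropic ideal, that is, a Lagrangian ideal.

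For uniqueness, let $\J'$ be any Lagrangian ideal and write $\G=\s\ltimes R$ with $\s\in\{\mso,\msl\}$ simple and $R$ the five-dimensional radical. I would first show $\J'\subseteq R$. The projection $\pi\colon\G\to\G/R\cong\s$ sends the ideal $\J'$ to an ideal of the simple algebra $\s$, so $\pi(\J')$ is $0$ or $\s$. Suppose $\pi(\J')=\s$. Then $\J'/(\J'\cap R)\cong\s$, so $\J'$ has a Levi factor $\s''\cong\s$ with $\s''\subseteq\J'$, and $\dim(\J'\cap R)=1$. Since both $\J'$ and $R$ are ideals, $[\s'',R]\subseteq\J'\cap R$ is at most one-dimensional; but by Weyl's theorem $[\s'',R]$ is a sum of non-trivial irreducible $\s''$-modules, each of dimension at least two, whence $[\s'',R]=0$. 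This makes $\G\cong\s''\oplus R$ decomposable, contradicting the indecomposability of $\G$. Hence $\pi(\J')=0$ and $\J'\subseteq R$.

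Being an ideal, $\J'$ is $\s$-invariant, so it is a four-dimensional $\s$-submodule of $R$. I would then decompose $R=V_0\oplus V_+$ into its trivial isotypic part $V_0$ and the sum $V_+=[\s,R]$ of its non-trivial components, and verify from Turkowski's structure constants that in each of the six cases $\s$ acts on $\langle e_4,e_5,e_6,e_7\rangle$ with no non-zero fixed vector while fixing $e_8$; thus $V_+=\J$ is four-dimensional and $V_0=\langle e_8\rangle$. Because $V_0$ is trivial and $V_+$ carries no trivial summand, every submodule $U$ of $R$ splits as $(U\cap V_0)\oplus(U\cap V_+)$, so a four-dimensional one is either $V_+$ itself or $V_0\oplus U_+$ with $U_+\subseteq V_+$ three-dimensional. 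The latter cannot occur, since $V_+$ has no three-dimensional submodule: for $\mso$ it is the irreducible realified spin-$\tfrac12$ module, and for $\msl$ it is either the irreducible $S^3$ or a sum $S^1\oplus S^1$ of standard modules. Therefore $\J'=V_+=\J$.

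The delicate point, and the one place where the argument must descend to the explicit brackets, is the module identification in the last step. Isotropy by itself does not pin down $\J$: since $\J$ is isotropic and $R=\J\oplus\langle e_8\rangle$, the restriction $\om|_R$ pairs only with $e_8$ and so has rank at most two, whence $R$ carries a whole one-parameter family of four-dimensional isotropic subspaces. It is precisely the rigidity coming from $\s$-invariance, together with the facts that $\dim V_+=4$ and that $V_+$ admits no three-dimensional submodule, that forces $\J'=\J$; establishing $\dim V_+=4$ for each of the six algebras is the main computational obstacle.
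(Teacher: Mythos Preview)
Your argument is sound but follows a different route from the paper, particularly for uniqueness.

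For existence, the paper sets up the linear system $\om(x,e_i)=0$ for $i\in\{4,5,6,7\}$ and checks that its determinant is a nonzero multiple of the nondegeneracy condition, whence $\J^\perp=\J$. Your inspection of the forms in Proposition~\ref{pr5} is lighter, but contains a slip: for $\mL_{8,17}^{p=0}$ the general closed $2$-form listed there carries the monomial $a_{67}\,e^{67}$, so $\J$ is \emph{not} isotropic for arbitrary $\om$. The corollary is implicitly about the exact forms, and for those one indeed has $a_{67}=0$ since $\om(e_6,e_7)=-\beta([e_6,e_7])=0$ whenever $\om=\mathrm{d}\beta$ and $\J$ is abelian; with that restriction your reading goes through.

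For uniqueness, the paper's argument is a single observation: any Lagrangian ideal of a symplectic Lie algebra is abelian (from the cocycle identity), hence contained in the nilradical, and in each of these algebras the nilradical is exactly the four-dimensional $\J$, so dimension forces $\J'=\J$. Your representation-theoretic approach---reducing to $\J'\subseteq R$ via the Levi projection and then classifying four-dimensional $\s$-submodules of $R$---is correct and more structural, but it trades one short check (identifying the nilradical) for a case-by-case verification of the $\s$-module type of $\langle e_4,\dots,e_7\rangle$. The nilradical route is quicker; yours makes more transparent why $e_8$ can never enter a Lagrangian ideal.
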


\begin{proof}
        We will give the proof for the symplectic Lie algebra $\mL_{8,3}$, since
all cases must be
        treated in the same way.
        Let $x=x_1e_1+\cdots+x_8e_8\in\mL_{8,3}$, we have $\om(x,e_i)=0$ for
$i\in\{4,5,6,7\}$ is equivalent to
        \[(S)\qquad\left\{
        \begin{array}{l}
        x_1a_{26} + x_2a_{37} + x_3a_{34} + 2x_8a_{36}=0\\
        x_1a_{34}+ x_2a_{36} - x_3a_{26} - 2x_8a_{37}=0\\
        x_1a_{37}- x_2a_{26} - x_3a_{36} + 2x_8a_{34}=0\\
        x_1a_{36}- x_2a_{34} + x_3a_{37} - 2x_8a_{26}=0.
        \end{array}
        \right.\]
        The determinant of the system $(S)$ is $-2(a^2_{26} + a^2_{34} +
a^2_{36} + a^2_{37})^2$ which is nonzero ($\om$ is not degenerate), so
$x=x_4e_4+x_5e_5+x_6e_6+x_7e_7\in\J$. For the uniqueness of $\J$, it suffices to notice that for dimension reasons, $\J$ coincides with the nilradical of $\mL_{8,3}$.
\end{proof}
From the general form of the symplectic forms listed in Proposition \ref{pr5} it is easy to see the following corollary.
\begin{corollary}\label{co2}
	The symplectic Lie algebras $\mL^{p,-p\neq0}_{8,7},\;\mL^0_{8,8},\;\mL^{0,q\neq0}_{8,9}$ and $\mL^0_{8,17}$ for $a_{67}\not=0$ admit $\mathfrak{a}=\langle e_6,e_7\rangle$ as a symplectic ideal.
\end{corollary}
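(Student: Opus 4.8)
The plan is to read off the restriction of $\om$ to $\A=\langle e_6,e_7\rangle$ directly from the normal forms listed in Proposition \ref{pr5}. By definition, an ideal is a \emph{symplectic ideal} when the restriction $\om|_{\A\times\A}$ is non-degenerate, so that $(\A,\om|_\A)$ is itself a symplectic subspace. The third item of the preceding Remark already records that $\A$ is an abelian ideal in each of the four algebras $\mL^{p,-p\neq0}_{8,7}$, $\mL^0_{8,8}$, $\mL^{0,q\neq0}_{8,9}$ and $\mL^0_{8,17}$; what remains is therefore purely linear-algebraic. Since $\dim\A=2$, the skew form $\om|_\A$ is determined by the single scalar $\om(e_6,e_7)$, and it is non-degenerate precisely when $\om(e_6,e_7)\neq0$.

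First I would compute $\om(e_6,e_7)$ for each family from the explicit expressions in Proposition \ref{pr5}. The key observation is that, in each of the four displayed two-forms, the only monomial $e^{ij}$ whose two indices both lie in $\{6,7\}$ is $a_{67}\,e^{67}$; every other monomial carries at least one index from $\{1,2,3,8\}$ and hence annihilates the pair $(e_6,e_7)$. In particular, the terms $a_{68}e^{68}$ and $a_{78}e^{78}$ occurring in $\mL^{p,-p\neq0}_{8,7}$, $\mL^0_{8,8}$ and $\mL^{0,q\neq0}_{8,9}$ vanish on $\A$ because they involve $e_8\notin\A$, and likewise the terms $e^{16},e^{27},e^{17},e^{36}$ appearing in $\mL^0_{8,17}$ vanish for the same reason. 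Thus $\om(e_6,e_7)=a_{67}$ in all four cases.

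Finally I would conclude: under the standing hypothesis $a_{67}\neq0$, the scalar $\om(e_6,e_7)=a_{67}$ is non-zero, so $\om|_\A$ is non-degenerate; combined with the ideal property supplied by the Remark, this exhibits $\A$ as a symplectic ideal. There is no genuine obstacle here — the argument is a one-line inspection once the normal forms of Proposition \ref{pr5} are in hand. The only point demanding care is the bookkeeping step of scanning each of the four displayed forms to confirm that $e^{67}$ is indeed the \emph{unique} monomial supported entirely on $\{e_6,e_7\}$, since this is exactly what guarantees that no hidden cross term can alter the coefficient $a_{67}$.
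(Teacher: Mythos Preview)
Your argument is correct and follows exactly the approach the paper intends: the paper offers no proof beyond the sentence ``from the general form of the symplectic forms listed in Proposition~\ref{pr5} it is easy to see the following corollary,'' and your inspection of the monomials in those four displayed two-forms is precisely the verification that sentence is gesturing at. If anything, your write-up is more explicit than the paper's, since you spell out both that $\A$ is already known to be an ideal (from the preceding Remark) and that $a_{67}e^{67}$ is the unique monomial supported on $\{6,7\}$.
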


\section{Classification of symplectic structures on eight-dimensional non-solvable  Lie algebras}
Let $\G=\langle e_1,\ldots,e_8 \rangle$, be an eight-dimensional symplectic non-solvable   Lie algebra. $\G$ admits a non-trivial Levi-Malcev decomposition $\G=\mathfrak{s}\ltimes \mathfrak{r}$, where $\mathfrak{s}$ its  semisimple part ($\mso$ or $\msl$), and $\mathfrak{r}$ its  radical part. The Lie algebra $\mso$ (resp. $\msl$)  is defined by the non-vanishing structure constants	
\[[e_1,e_2]=e_3,~[e_1,e_3]=-e_2,~[e_2,e_3]=e_1.\]
(resp.
\[[e_1,e_2]=e_3,~[e_1,e_3]=-2e_1,~[e_2,e_3]=2e_2.)\]	
Put  $\J=\langle e_4,e_5,e_6,e_7\rangle$ and $\mathfrak{a}=\langle e_6,e_7\rangle$. Based on Corollaries \ref{co1} and \ref{co2}, we can distinguish two types of  eight-dimensional symplectic non-solvable  Lie algebras.

\begin{itemize} 	
	\item[Type I.] The one with a Lagrangian ideal $\J$, this type is Frobeniusian and can be reconstructed from the left symmetric Lie algebra $(\h=\G/\J,\cdot)$.
	\item[Type II.]  The one with a symplectic ideal $\mathfrak{a}$, this type admits decomposition $\aff \ltimes \mathfrak{a}$.
\end{itemize} 	
\subsection{Type I}

Let $(\G,\omega)$ be an  eight-dimensional Frobeniusian non-solvable Lie algebra. From Corollary $\ref{co1}$, $(\G,\omega)$ has a Lagrangian ideal $\J=\langle e_4,e_5,e_6,e_7\rangle$. In this case, 
	$\h=\G/\J$ is isomorphic to  $\mathfrak{s}\oplus\R e_4^\prime$, where $\mathfrak{s}$ is a simple Lie algebra ($\mso$ or $\msl$), and $e_4^\prime$ is a central element in $\h$. Using Proposition \ref{pr 3} the problem of classification of Frobeniusian structures on eight-dimensional non-solvable Lie algebra is reduced to the classification of left-symmetric structures on $\h$. Recall that, two LSA $(\h,\mL)$ and $(\h, \tilde{\mL})$ are isomorphic if and only if there exists  $\psi \in\mathrm{Aut}(\h)$ such that, $\tilde{\mathrm{L}}(x)=\psi\circ\mathrm{L}(\psi^{-1}(x))\circ\psi^{-1}$ for all $x\in\h$.
	
To avoid any confusion, let us take $e_4^\prime=e_4$  in the next calculation.
\begin{proposition} The left-symmetric structures on $\h$ has a right-identity $($i.e., $\mathrm{R}(e) = \mathrm{Id}_\h)$ noted $e=\lambda_1 e_1+\lambda_2 e_2+\lambda_3 e_3+\lambda_4 e_4$ with $\lambda_4\neq 0$. Moreover, we have
\begin{enumerate}
	\item For $\h=\mso\oplus\mathbb{R} e_4$.
	We may assume that $e = \mu e_1+e_4$ or $e = e_4$.
	\item For $\h=\msl\oplus\mathbb{R}e_4$.
	We may assume that   $e =  e_1+e_4$, $e=\lambda e_3+e_4$, $e=e_1+\nu e_2+e_4$ or $e=e_4$.
\end{enumerate}	
\end{proposition}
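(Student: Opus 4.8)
My plan is to obtain the right identity from the exactness of $\om$, then to pin down $\lambda_4\neq0$ by a trace computation, and finally to normalise $e$ using $\mathrm{Aut}(\h)$. First, since every Type I algebra is Frobeniusian, $\om=\mathrm{d}\xi$ is exact. As $\om$ is non-degenerate there is a unique principal element $\e\in\G$ with $\om(\e,\cdot)=\xi$, equivalently $\xi([\e,w])=-\xi(w)$ for all $w\in\G$. I claim its class $\bar\e\in\h=\G/\J$ is a right identity for the reduced left-symmetric product of Proposition \ref{pr 1}. Indeed, for every $\bar x\in\h$ and $a\in\J$,
\[\om_\h(\bar x\cdot\bar\e,a)=-\om(\e,[x,a])=\xi([\e,[x,a]])=-\xi([x,a])=\om(x,a)=\om_\h(\bar x,a),\]
where the middle steps use $\om(u,v)=-\xi([u,v])$ and the defining relation of $\e$ applied to $w=[x,a]$. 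Since $\om_\h$ is a non-degenerate pairing between $\h$ and $\J$, this yields $\bar x\cdot\bar\e=\bar x$, i.e. $\mathrm{R}(\bar\e)=\mathrm{Id}_\h$. Writing $e=\bar\e=\lambda_1e_1+\lambda_2e_2+\lambda_3e_3+\lambda_4e_4$ records the claimed shape.

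Next I would show $\lambda_4\neq0$ by an argument that avoids the precise product. Because $e_4$ is central and $\mathfrak{s}$ is simple (hence perfect), $[\h,\h]=\mathfrak{s}$ and $\h$ is unimodular. Since $\mL:\h\to\mathrm{End}(\h)$ is a representation, $\tr\mL(w)=0$ for every $w\in[\h,\h]=\mathfrak{s}$. On the other hand, from $e\cdot x-x\cdot e=[e,x]$ one gets $\mL(e)=\mathrm{Id}_\h+\ad(e)$, so $\tr\mL(e)=\dim\h+\tr\ad(e)=4$ by unimodularity. If $\lambda_4=0$ then $e\in\mathfrak{s}=[\h,\h]$ and the first remark would force $\tr\mL(e)=0$, a contradiction; hence $\lambda_4\neq0$.

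Finally I would normalise $e$ using $\mathrm{Aut}(\h)$. Since $\mathfrak{s}=[\h,\h]$ and $\R e_4=Z(\h)$ are characteristic, and any putative cross term $x\mapsto\ell(x)e_4$ must vanish on $[\mathfrak{s},\mathfrak{s}]=\mathfrak{s}$, one gets $\mathrm{Aut}(\h)=\mathrm{Aut}(\mathfrak{s})\times\R^\ast$, with the $\R^\ast$ rescaling $e_4$. Rescaling $e_4$ so that the $e_4$-coefficient equals $1$ reduces $e$ to $v+e_4$ with $v\in\mathfrak{s}$, and it remains to put $v$ into canonical form under $\mathrm{Aut}(\mathfrak{s})$. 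For $\mathfrak{s}=\mso$, $\mathrm{Aut}(\mathfrak{s})=\mathrm{SO}(3)$ acts by rotations on $\mathfrak{s}\cong\R^3$, so $v$ is either $0$ or rotated to $\mu e_1$, giving $e=\mu e_1+e_4$ or $e=e_4$. For $\mathfrak{s}=\msl$, $\mathrm{Aut}(\mathfrak{s})=\mathrm{PGL}_2(\R)$ preserves the Killing quadratic form, and splitting $v\neq0$ into its real Jordan types (nilpotent, split semisimple, elliptic) together with $v=0$ yields the representatives $e=e_1+e_4$, $e=\lambda e_3+e_4$, $e=e_1+\nu e_2+e_4$ and $e=e_4$.

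I expect the last step to be the only delicate one. Over $\R$ the adjoint orbits of $\msl$ are governed by the sign of the Killing form and are not exhausted by a single representative as in the compact case $\mso$, so the care lies in checking that the four listed forms cover all orbits; note that the families genuinely overlap, e.g. $e_1+\nu e_2$ with $\nu>0$ is conjugate to a multiple of $e_3$. This redundancy is harmless, because the actual distinction between the resulting left-symmetric algebras is settled afterwards in the classification (Theorem \ref{the 2}). By contrast, the existence of the right identity and the inequality $\lambda_4\neq0$ are structural and robust.
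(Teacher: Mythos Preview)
Your argument is correct and follows the same overall strategy as the paper: obtain the right identity from exactness of $\omega$, rule out $\lambda_4=0$ via a trace argument, and then normalise $e$ by automorphisms of $\h$. The first two steps are essentially identical to the paper's (the paper phrases the trace argument via $\mathrm{tr}\,\mathrm{R}$ rather than $\mathrm{tr}\,\mathrm{L}$, but this is the same computation read through $\mathrm{L}=\mathrm{R}+\mathrm{ad}$).

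The normalisation step is where your treatment differs. The paper carries this out by writing down explicit one-parameter families of automorphisms $\psi_i^\pm$ (rotation blocks for $\mso$, explicit $4\times4$ matrices for $\msl$) and chasing the coordinates of $e$ through successive cases. You instead identify $\mathrm{Aut}(\h)=\mathrm{Aut}(\mathfrak{s})\times\R^\ast$ structurally and then invoke the classification of $\mathrm{Aut}(\mathfrak{s})$-orbits on $\mathfrak{s}$: spheres for $\mso$ under $SO(3)$, and the nilpotent/split/elliptic trichotomy for $\msl$ under $\mathrm{PGL}_2(\R)$. This is cleaner and makes transparent why the paper's list of representatives has overlaps (e.g.\ $e_1+\nu e_2$ with $\nu>0$ lying in the split orbit of some $\lambda e_3$), a point the paper leaves implicit. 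The paper's explicit matrices, on the other hand, are immediately reusable in the subsequent case-by-case determination of the LSA structures in Theorem~\ref{the 2}, so its more computational route feeds directly into what comes next.
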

\begin{proof} 
The	left-symmetric product in $\h$ is
	defined by the equation
	\begin{equation*}\om_\h(\overline{x}\cdot \overline{y},a)=-\om(y,[x,a]),\qquad\forall
	\overline{x},\,\overline{y}\in\h,\;a\in\J.\end{equation*}
If  $\G$ is a Frobenius Lie algebra. Then $\om(x,y)=-\beta([x,y]),\forall x,y\in\G$ for some $\beta\in\G^*$, which implies that there
exists a unique element $e\in\G$ such that $\om(e,z)=\beta(z)$, for any $z\in\G$. Hence we have
\[\om_\h(\overline{x}\cdot \overline{e},a)=-\om(e,[x,a])=-\beta([x,a])=\om_h(\overline{x},a).\]
$\forall a\in\J$, form Proposition \ref{pr 1}; therefore $\overline{x}\cdot\overline{e}=\overline{x}$ and $\overline{e}$ is a right identity. In what follows, we use the identification $\h\simeq\mathfrak{s}\oplus\R e_4$ and set $e=\lambda_1 e_1+\lambda_2 e_2+\lambda_3 e_3+\lambda_4 e_4$ or to simplify $e=(\lambda_1,\lambda_2,\lambda_3,\lambda_4)$. We have $\lambda_4\not=0$, otherwise
 \[0=\mathrm{tr}\big(\mathrm{R}(\lambda_1 e_1)\big)+\mathrm{tr}\big(\mathrm{R}(\lambda_2 e_2)\big)+\mathrm{tr}\big(\mathrm{R}(\lambda_3 e_3)\big)=\mathrm{tr}(\mathrm{R}(e))=4.\]
Indeed, $\mathrm{tr}\big(\mathrm{L}([x, y])\big) = \mathrm{tr}([\mathrm{L}(x), \mathrm{L}(y)])$, $[\mathfrak{s}, \mathfrak{s}] = \mathfrak{s}$ and 
	\begin{equation}
	\mathrm{tr}(\mathrm{R}(x)) = \mathrm{tr}(\mathrm{L}(x))-\mathrm{tr}(\mathrm{ad}(x)) = 0. 
	\end{equation}

	Note that, if $(\h, \mathrm{L})$ and $(\h,\tilde{\mathrm{L}} )$ are isomorphic LSA's, then $\mathrm{R}(e)=\mathrm{Id}_\h$ implies $\tilde{\mathrm{R}}(\psi(e))=\psi\circ\mathrm{R}(e)\circ\psi^{-1}=\mathrm{Id}_\h$, i.e., the LSA $(\h,\tilde{\mathrm{L}})$ has right identity $\psi(e)$.
	
{\textbf{For $\h=\mso\oplus\mathbb{R} e_4:$}}
	
	 Let $e=(\lambda_1,\lambda_2,\lambda_3,\lambda_4)$. First we may assume $\lambda_2=0$. Otherwise,  let $\theta\in\mathbb{R}$ be a root of  $\pm\sqrt{-\theta^{2}+1}\lambda_{1}+\theta\lambda_{2}=0$, if $\lambda_2>0$ (resp. $\lambda_2<0$) we choose $\psi^-_1$ (resp. $\psi^+_1$) and we have
	\begin{center}
		$\psi^{\pm}_1(e)=\left( \sqrt{\lambda_1^2+\lambda_{2}^2},0,\lambda_{3},\eta\lambda_{4}
		\right) 
		$,
	\end{center}
	where,
	\begin{center}
		$\psi^\pm_1=\left( \begin {array}{cccc} \theta&\mp\sqrt {-\theta^{2}+1}&0&0
		\\ \noalign{\medskip}\pm\sqrt{-\theta^{2}+1}&\theta&0&0
		\\ \noalign{\medskip}0&0&1&0\\ \noalign{\medskip}0&0&0&\eta
		\end {array} \right)\in\mathrm{Aut}(\mso\oplus\mathbb{R} e_4).
		$
	\end{center}
	\textbf{Case $1:$} $\lambda_1=0$. If $\lambda_3=0$ then $\psi^{\pm}_1(e)=e_4$ with $\eta=\frac{1}{\lambda_{4}}$ (note that $e\neq 0$). If
	$\lambda_3 \neq 0$ then it follows $\lambda_4 \neq 0$, otherwise $0 =\mathrm{tr}(\mathrm{R}(\lambda e_3)) =\mathrm{tr}( \mathrm{R}(e)) = 4$, contradiction. Then $\psi^\pm_1(e) =  \lambda_3 e_3 + e_4$ with  $\eta=\frac{1}{\lambda_{4}}$.\\
	\textbf{Case $2:$} $\lambda_1\neq 0$.  Let  $\theta\in\mathbb{R}$ be a root of $\theta\lambda_1+\sqrt{-\theta^2+1}\lambda_3=0$, if $\lambda_3>0$ (resp. $\lambda_3<0$) we choose $\psi^-_2$ (resp. $\psi^+_2$), and we have 
	$\psi^\pm_2(e)=\left( 0,0,\sqrt{\lambda_1^2+\lambda_3^2},\eta\lambda_4\right)$
	and we are back to case $1$, where,
	\begin{center}
		$\psi^\pm_2=\left( \begin {array}{cccc} \theta&0&\mp\sqrt{-\theta^{2}+1}&0
		\\ \noalign{\medskip}0&1&0&0\\ \noalign{\medskip}\pm\sqrt {-{\theta}^
			{2}+1}&0&\theta&0\\ \noalign{\medskip}0&0&0&\eta\end {array}
		\right)\in\mathrm{Aut}(\mso\oplus\mathbb{R} e_4).
		$
	\end{center}
	Note that we can take $\lambda_3>0$, indeed $\psi^{+}_2(0,0,\lambda_3,1)=(0,0,-\lambda_3,1)$, with $\theta=-1$ and $\eta=1$. We have also, $\psi_2^-(0,0,\lambda_3,1)=(\lambda_3,0,0,1)$, with $\theta=0$, and $\eta=1$.

\textbf{For $\h=\mathfrak{sl}_2(\mathbb{R})\oplus\mathbb{R}e_4:$}

 Let $\psi_1, \psi_2\in\mathrm{Aut}(\mathfrak{sl}_2(\mathbb{R})\oplus\mathbb{R}e_4)$ be two automorphisms respectively defined by
	
	\begin{center}
		$\psi_1:=\left( \begin {array}{cccc} \theta_1&\theta_2&2\,\sqrt {-\theta_1\,
			\theta_2}&0\\ \noalign{\medskip}{\frac {1}{{\theta_2}^{2}} \left( -2\,
			\sqrt {-\theta_1\,\theta_2}\theta_3-\theta_1\,\theta^{2}_3+\theta_2
			\right) }&-{\frac {\theta^{2}_3}{\theta_2}}&2\,{\frac {\theta_3\,
				\left( -\sqrt {-\theta_1\,\theta_2}\theta_3+\theta_2 \right) }{{\theta_2}^
				{2}}}&0\\ \noalign{\medskip}{\frac {1}{\theta_2} \left( \theta_1\,\theta_3+\sqrt {-\theta_1\,\theta_2} \right) }&\theta_3&{\frac {1}{\theta_2}
			\left( 2\,\sqrt {-\theta_1\,\theta_2}\theta_3-\theta_2 \right) }&0
		\\ \noalign{\medskip}0&0&0&\theta_4\end {array} \right) 
		$,
	\end{center}
	\begin{center}
		$\psi_2:=\left( \begin {array}{cccc} \theta_1&0&0&0\\ \noalign{\medskip}-{
			\frac {\theta^{2}_2}{\theta_1}}&\frac{1}{\theta_1}&-2\,{\frac {\theta_2}{
				\theta_1}}&0\\ \noalign{\medskip}\theta_2&0&1&0\\ \noalign{\medskip}0&0&0
		&\theta_3\end {array} \right).
		$
	\end{center}
	\textbf{Case $1:$} $\lambda_1=0$. If $\lambda_3\neq 0$, then $\psi_2(e)=\lambda_3e_3+e_4$, with  $\theta_2=\frac{\lambda_2}{2\lambda_3}$ and $\theta_3=\frac{1}{\lambda_4}$. If $\lambda_3=0$,  we have $\psi_2(e)=e_2+e_4$, with $\theta_1=\lambda_2$ and $\theta_3=\frac{1}{\lambda_4}$ (if $\lambda_2\neq 0$) and $\psi_2(e)=e_4$ with $\theta_3=\frac{1}{\lambda_4}$ (if $\lambda_2=0$).\\\\
	\textbf{Case $2:$} $\lambda_1\neq 0$. Firstly, we have $\psi_2(e)=(1,\lambda_1\lambda_2+\lambda^2_3,0,1)$, with $\theta_1=\frac{1}{\lambda_1}$, $\theta_2=-\frac{\lambda_3}{\lambda_1}$ and $\theta_3=\frac{1}{\lambda_4}$.
	On the other hand,  let's now set $e=e_1+\nu e_2+e_4$. Firstly, we have $\psi_1(0,0,\lambda_3,1)=(0,0,-\lambda_3,1)$ with $\theta_1=0, \theta_3=0$ and $\theta_4=1$. If $\nu>0$, we have $\psi_1(e)=(0,0,\sqrt{\nu},1)$, with $\theta_1=-\nu,\theta_2=1, \theta_3=\frac{1}{2\sqrt{\nu}}$, and we are back to case 1.   Note that,   $\psi_1(0,1,0,1)=(1,0,0,1)$ with $\theta_3 = 0, \theta_2 = 1, \theta_4 = 1$.	Which completes the proof.
\end{proof}

The following theorem classifies real left-symmetric products over $\h=\mathfrak{s}\oplus\R e_4$, where $\mathfrak{s}$ is a simple Lie algebra ($\mso$ or $\msl$). In \cite{Bu} Burde classified the complex left-symmetric products for $\mathfrak{sl}_2(\mathbb{C})\oplus\mathbb{C}$.

\begin{theorem}\label{the 2}
	Let  $\h=\mathfrak{s}\oplus\R e_4$, where $\mathfrak{s}$ is a simple Lie algebra $(\mso$ or $\msl)$. Then the left symmetric product in $\h$ is listed in Table $\ref{tab2}$.	

\end{theorem}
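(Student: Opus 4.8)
The plan is to reformulate the classification of left-symmetric products on $\h=\s\oplus\R e_4$ entirely in terms of the left-multiplication map. Giving an LSA structure on $\h$ whose commutator is the prescribed bracket is the same as giving a linear map $\mL\colon\h\to\mathrm{End}(\h)$ which is a representation of the Lie algebra, $[\mL(x),\mL(y)]=\mL([x,y])$, and which is compatible with the bracket in the sense that $\mL(x)y-\mL(y)x=[x,y]$ for all $x,y\in\h$; the product is then recovered as $x\cdot y=\mL(x)y$. By the preceding proposition each product under consideration carries a right identity $e$ with $\mathrm{R}(e)=\mathrm{Id}_\h$, already reduced to one of the normal forms listed there. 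From $\mathrm{R}(e)=\mathrm{Id}_\h$ and the compatibility relation one obtains the clean identity $\mL(e)=\mathrm{Id}_\h+\ad(e)$, which fixes left multiplication by the right identity in terms of data that are already known. Fixing the simple factor $\s\in\{\mso,\msl\}$ and one normal form of $e$, the task becomes to determine all representations $\mL$ satisfying the compatibility relation and $\mL(x)e=x$, up to the residual automorphisms of $\h$ that stabilize $e$.

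First I would organize the unknown product by the $\s$-module structure of $\h$. Restricting $\mL$ to $\s$ makes $\h$ a four-dimensional module over the three-dimensional simple Lie algebra $\s$; by complete reducibility these are classified (for $\msl$ by the real irreducibles $V_0,V_1,V_2,V_3$, for $\mso$ by the trivial and standard modules together with the quaternionic four-dimensional one), so only finitely many decomposition types occur and the compatibility relation selects the admissible ones. The antisymmetric part of that relation forces $x\cdot y-y\cdot x=[x,y]$ on $\s$, so the product on $\s\times\s$ equals the half-bracket plus an unknown symmetric map $\s\times\s\to\h$, while the products $e_4\cdot x$, $x\cdot e_4$ and $e_4\cdot e_4$ are governed by the centrality of $e_4$. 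Since $e_4$ is central, $\mL(e_4)$ commutes with every $\mL(x)$, hence is an intertwiner of the $\s$-module $\h$, and Schur's lemma restricts its possible shapes on each isotypic block.

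With these constraints in place I would substitute into the quadratic representation identity $[\mL(x),\mL(y)]=\mL([x,y])$ to solve for the remaining structure constants, using $\mL(e)=\mathrm{Id}_\h+\ad(e)$ and the eigenstructure of $\ad(e)$ for each normal form of $e$ to cut down the cases. This yields, for each $\s$ and each normal form of $e$, an explicit finite family of products, possibly carrying continuous parameters. The final step is to quotient by the stabilizer $\mathrm{Stab}(e)$ in $\mathrm{Aut}(\h)$, normalizing the parameters to the canonical representatives recorded in Table~$\ref{tab2}$; Burde's complex classification \cite{Bu} for $\mathfrak{sl}_2(\mathbb{C})\oplus\mathbb{C}$ serves as a guide and consistency check, with the real forms accounting for the splitting into the $\mso$ and $\msl$ lists.

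The hard part is twofold. The representation identity is quadratic and couples the symmetric part of the product on $\s$ with $\mL(e_4)$, so the elimination is genuinely nonlinear and must be carried out block by block along the $\s$-isotypic decomposition. Even more delicate is ruling out redundancy: one must verify that no two surviving families are related by an automorphism stabilizing $e$, which I would do by comparing isomorphism invariants such as the Jordan type of $\mL(e_4)$, the dimensions of the $\s$-submodules of $\h$, and the rank of the symmetric part of the product. The $\msl$ case is the more laborious one, as it admits more normal forms of $e$ and more real four-dimensional representation types than $\mso$.
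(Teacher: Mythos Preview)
Your outline is sound and aligns with the paper's strategy: both organize the problem around the normal form of the right identity $e$ from the previous proposition, exploit $\mL(e)=\mathrm{Id}_\h+\ad(e)$ together with the $\s$-module structure of $\h$, and then solve the quadratic representation identity case by case modulo the stabilizer of $e$. The main procedural difference is that the paper does not attempt a uniform treatment: for the non-central right identities it works directly with the linear constraints coming from $[\mL(e_4),\ad(e)]=0$ and $\tr(\mathrm{R}(s))=0$; for the central case $e=e_4$ it invokes the $\s$-module decomposition much as you propose, but pins down the answer by computing the characteristic and minimal polynomials of $\mL(e_3)$ (respectively $\mL(e_j)$ for $\mso$) against the known irreducibles; and for the awkward $\msl$-case $e=e_1+\nu e_2+e_4$ with $\nu<0$ it switches to Medina's bijective $1$-cocycle formalism, which linearizes the problem and makes the nonexistence in the irreducible subcase transparent via a determinant. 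Your uniform Schur-lemma/intertwiner approach would reach the same list, but you should expect the $e=e_1+\nu e_2+e_4$ case to resist the direct elimination you sketch; the $1$-cocycle reformulation is what makes that case tractable in the paper.
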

\begin{proof}
	The left symmetry product is given by $64$ structure constants via $\mathrm{L}(e_1)$, $\mathrm{L}(e_2)$, $\mathrm{L}(e_3)$, $\mathrm{L}(e_4)$.  The
	condition $[x, y] = x\cdot y-y\cdot x$ determines $24$ structure constants by linear equations. The identity $(\ref{ass})$ gives us quadratic equations  in the structure constants, which are generally difficult to solve. The existence of a non-central right-identity,  will simplify the matter considerably. We have
	\begin{equation}\label{simplifyequa}
	[\mathrm{L}(e_4),\mathrm{ad}(e)]=[\mathrm{L}(e_4),\mathrm{L}(e)]=\mathrm{L}[e_4,\,e]=0.
	\end{equation}
	Note that $\mathrm{tr}(\mathrm{R}(s))=0$ for all $s\in\mathfrak{s}$. Indeed, $\mathrm{tr}\big(\mathrm{L}([x, y])\big) = \mathrm{tr}([\mathrm{L}(x), \mathrm{L}(y)])$, $[\mathfrak{s}, \mathfrak{s}] = \mathfrak{s}$ and 
	\begin{equation}
	\mathrm{tr}(\mathrm{R}(x)) = \mathrm{tr}(\mathrm{L}(x))-\mathrm{tr}(\mathrm{ad}(x)) = 0. 
	\end{equation}
\textbf{	I. Algebras with $e=e_1+e_4$ or $e=\alpha e_3+e_4$}.

	\textbf{Case 1 :  $\mathfrak{s}=\mathfrak{sl}_2(\mathbb{R})$.} Algebras with $e=e_1+e_4$. Using (\ref{simplifyequa}), we get $[\mathrm{L}(e_4),\mathrm{ad}(e_1)]=0$ and $\mathrm{R}(e_1)+\mathrm{R}(e_4)=\mathrm{Id}_\h$ and the fact that $\mathrm{tr}(\mathrm{R}(s))=0$ for all $s\in\mathfrak{sl}_2(\mathbb{R})$. The
	remaining LSA-structure equations then are almost trivial. It is easy to see that they have
	a unique solution, which is given by the algebra $\h_1$.

	Algebras with $e=\lambda e_3+e_4$. Assume first that $\lambda> 0$. Then $\lambda\mathrm{R}(e_3)+\mathrm{R}(e_4)=\mathrm{Id}_\h$ and $[\mathrm{L}(e_4),\mathrm{ad}(e_3)]=0$. 
	This determines further  structure constants by linear equations. It is easy to solve the remaining equations and to obtain the algebra $\h_2^{\lambda>0}$.

	\textbf{Case 2 :  $\mathfrak{s}=\mathfrak{so}(3)$.} Algebras with $e=\mu e_1+e_4$, $\mu>0$. Using (\ref{simplifyequa}) we have $[\mathrm{L}(e_4), \mathrm{ad}(e_1)] = 0$ and $\mu\mathrm{R}(e_1)+\mathrm{L}(e_4)=\mathrm{Id}_\h$. Also $\mathrm{tr}(\mathrm{R}(s))=0$  for all $s\in\mathfrak{so}(3)$ . This determines another 25 structure constants by linear equations.  The remaining LSA-structure equations then are almost trivial. It is easy to see that they have a unique solution, which is given by the algebra $\h_4^{\mu>0}$  of Theorem \ref{the 2}.
\end{proof}

\textbf{II. Algebras with central right-identity $e=e_4$.}\\

Let $(\h,\cdot)$ be an LSA-structure on $\G$ ($\G=\mathfrak{sl}_2(\R)\oplus\mathbb{R}e_4$). Since $\mathrm{R}(e_4)=\mathrm{Id}_\h$, $\h$ is completely reducible as $\G$-module. Because $H^0(\G,\h)=0$, we have only two possibilities for $\h$. In the first case, $\h$
is irreducible, and in the second case, $\h=V\oplus V$, where V (as an $\mathfrak{sl}_2(\mathbb{R})$-module) is isomorphic to the $2$-dimensional natural representation of $\mathfrak{sl}_2(\mathbb{R})$.

\begin{lemma}\label{irri2}
	Let $\h=(\mathfrak{sl}_2(\mathbb{R})\oplus\mathbb{R}e_4,\cdot)$ be an LSA-structure on $\mathfrak{sl}_2(\mathbb{R})\oplus\mathbb{R}e_4$. Then, $\mathrm{L}(e_3)$  is similar to $\mathbf{diag}(-3, -1, 1, 3)$ or to $\mathbf{diag}(1,-1, 1, -1)$
	and $\mathrm{L}(e_1)$, $\mathrm{L}(e_2)$  are nilpotent.
\end{lemma}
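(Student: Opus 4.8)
The plan is to analyze the left multiplication $\mathrm{L}$ restricted to the Levi factor. Since an LSA satisfies $[\mathrm{L}(x),\mathrm{L}(y)]=\mathrm{L}([x,y])$ and $[\msl,\msl]=\msl$, the map $\mathrm{L}|_{\msl}$ is a representation of $\msl$ on the four-dimensional space $\h$. Moreover, because $e_4$ is central and $\mathrm{R}(e_4)=\mathrm{Id}_\h$, the relation $\ad(x)=\mathrm{L}(x)-\mathrm{R}(x)$ gives $\mathrm{L}(e_4)=\mathrm{R}(e_4)+\ad(e_4)=\mathrm{Id}_\h$, so $e_4$ acts as the identity; in particular $\mathrm{L}(s)e_4=s\cdot e_4=\mathrm{R}(e_4)s=s$ for every $s\in\msl$. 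By Weyl complete reducibility for the semisimple algebra $\msl$, the module decomposes as $\h\cong\bigoplus_i V_{n_i}$ with $\sum_i(n_i+1)=4$, where $V_n$ denotes the absolutely irreducible real $\msl$-module of highest weight $n$ (all weights real, since $\msl$ is split). Everything then reduces to deciding which decomposition can occur, after which the $\mathbf{diag}$-forms and the nilpotency are read off from the weights.

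The decisive point is that $\h$ carries no trivial summand, i.e. $\h^{\msl}=0$, which is the vanishing $H^0=0$ invoked in the statement. I would argue as follows. Let $P$ be the (canonical, hence $\msl$-equivariant) projection of $\h$ onto its trivial isotypic component $\h^{\msl}$. Since $\mathrm{L}(s)$ annihilates $\h^{\msl}$ it commutes with $P$, so applying $P$ to $\mathrm{L}(s)e_4=s$ yields $P(s)=\mathrm{L}(s)P(e_4)=0$ for all $s\in\msl$. Hence $\msl\subseteq\ker P$, which forces $\dim\h^{\msl}\le 1$; if it were $1$, then $\ker P=\msl$ would be a three-dimensional submodule with no trivial part, so $\ker P\cong V_2$, and writing $e_4=a+b$ with $a\in\msl$, $0\neq b\in\h^{\msl}$ gives $\mathrm{L}(s)a=s$ for all $s$. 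The hard part will be ruling out this $V_2\oplus V_0$ configuration: the module relation $\mathrm{L}(s)a=s$ by itself admits formal solutions, so one must feed in the full left-symmetry identity $(\ref{ass})$ together with the constraint that $\mathrm{L}(x)y-\mathrm{L}(y)x$ reproduce the prescribed $\msl$-bracket. These over-determine the remaining structure constants and leave no solution carrying an invariant vector. This is the only genuinely computational step and the main obstacle.

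Granting $\h^{\msl}=0$, a dimension count finishes the proof: since $V_4$ is too large and any occurrence of $V_2$ (dimension three) would require a complementary $V_0$, the only four-dimensional $\msl$-modules with no trivial summand are $V_3$ (irreducible) and $V_1\oplus V_1$. In the first case $\mathrm{L}(e_3)=\mathrm{L}(h)$ acts with weights $\{3,1,-1,-3\}$, so it is similar to $\mathbf{diag}(-3,-1,1,3)$; in the second the weights are $\{1,-1,1,-1\}$, giving $\mathbf{diag}(1,-1,1,-1)$. Finally, from $[e_3,e_1]=2e_1$ and $[e_3,e_2]=-2e_2$ one gets $[\mathrm{L}(e_3),\mathrm{L}(e_1)]=2\mathrm{L}(e_1)$ and $[\mathrm{L}(e_3),\mathrm{L}(e_2)]=-2\mathrm{L}(e_2)$, so $\mathrm{L}(e_1)$ and $\mathrm{L}(e_2)$ raise and lower the $\mathrm{L}(e_3)$-weight by $2$; as the weight set is finite, they are nilpotent, which is the last assertion.
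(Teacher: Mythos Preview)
Your approach matches the paper's: both use that $\mathrm{L}|_{\msl}$ is a four-dimensional $\msl$-representation with no trivial summand, reducing to the two possibilities $V_3$ or $V_1\oplus V_1$, and then read off the claim from the weights. The paper simply asserts $H^0(\G,\h)=0$ in the paragraph preceding the lemma without justification, whereas you attempt to prove it; aside from that, the arguments are essentially identical.

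Your reduction to $\dim\h^{\msl}\le 1$ via the equivariant projection $P$ is clean, but you overestimate the difficulty of eliminating the $V_2\oplus V_0$ case: no computation with the LSA structure constants is needed. You have already shown that in this case $\msl=\ker P$ is $\mathrm{L}$-invariant, hence closed under the product (since $s\cdot t=\mathrm{L}(s)t\in\msl$ for $s,t\in\msl$); thus $(\msl,\cdot)$ is a sub-LSA whose underlying bracket is the $\msl$-bracket. You also produced $a\in\msl$ with $s\cdot a=s$ for all $s\in\msl$, i.e.\ $a$ is a right identity for this sub-LSA, giving $\tr_{\msl}\mathrm{R}(a)=3$. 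But $a\in\msl=[\msl,\msl]$ forces $\tr_{\msl}\mathrm{R}(a)=\tr_{\msl}\mathrm{L}(a)-\tr_{\msl}\ad(a)=0$, a contradiction. Equivalently, this is just the classical fact that a semisimple Lie algebra admits no LSA structure. With this small addition your argument is complete.
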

\begin{proof}
	If $\h$ is irreducible (as an $\mathfrak{sl}_2(\mathbb{R})$-module), then $\h$ has a basis $\{v_0,v_1,v_2,v_3\}$ such that 
	\begin{center}
		$e_3 v_j=(-3+2j)v_j$, $e_1v_j=v_{j+1}$, $e_2v_j=(4-j)v_{j-1}$ and $e_2v_0=0$ for $j=0,...,3$.
	\end{center}
	With respect to this basis, $\mathrm{L}(e_3)=\mathrm{diag}(-3, -1, 1, 3)$ and   $\mathrm{L}(e_1)$,  $\mathrm{L}(e_2)$   are nilpotent. If $\h=V\oplus V$ where $V$ (as an $\mathfrak{sl}_2(\mathbb{R})$-module) is
	isomorphic to the $2$-dimensional natural representation of $\mathfrak{sl}_2(\mathbb{R})$. In this case, we choose a basis according to $V\oplus V$, where $V$ is a highest weight module for $\mathfrak{sl}_2(\mathbb{R})$. Note that the basis $\{v_0,v_1,v_2,v_3\}$ does not satisfy the LSA-structures.
	
\end{proof}

For a Euclidean ring $\mathbb{R}$ it is well know that $\mathbf{GL}_2(\mathbb{R})$, the group of invertible $2\times 2$-matrices over $\mathbb{R}$, is generated by the elementary matrices
\begin{center}
	$\begin{pmatrix}
	1&\beta\\
	0&1
	\end{pmatrix},~\begin{pmatrix}
	\alpha&0\\
	0&\delta
	\end{pmatrix},~\begin{pmatrix}
	0&1\\
	1&0
	\end{pmatrix}$,
\end{center}
where $\alpha, \beta, \delta\in\mathbb{R}$ and $\alpha,\beta$ are units. By explicit calculations now we classify the left-invariant affine structures on $\mathbf{GL}_2(\mathbb{R})$,
i.e., the LSA-structures on $\G=\mathfrak{gl}_2(\mathbb{R})$. Let $$e_1=\begin{pmatrix}
0&1\\
0&0
\end{pmatrix},~e_2=\begin{pmatrix}
0&0\\
1&0
\end{pmatrix},~e_3=\begin{pmatrix}
1&0\\
0&-1
\end{pmatrix},~e_4=\begin{pmatrix}
1&0\\
0&1
\end{pmatrix}$$ be
the canonical $\mathfrak{sl}_2(\mathbb{R})$-basis for $\G$. The Lie algebra automorphisms of $\G$ are 
\begin{center}
	$\psi_A : \G\rightarrow\G,~~X\mapsto AXA^{-1}$ with $A=\begin{pmatrix}
	\alpha&\beta\\
	\gamma&\delta
	\end{pmatrix},$ $\alpha\delta-\gamma\beta\neq 0$
\end{center}
and 
\begin{center}
	$\psi_t : \G\rightarrow\G,~~x\mapsto s+te_4$~ where~  $x=s+e_4$, $s\in\mathfrak{sl}_2(\mathbb{R})$.
\end{center}

\textbf{Lie Algebra:} $\h=(\mathfrak{sl}_2(\mathbb{R})\oplus\mathbb{R}e_4, \cdot)$. Let $\mathrm{L}(e_1)=(x_{k\ell})_{k,\ell}, \mathrm{L}(e_2)=(y_{k\ell})_{k,\ell},\mathrm{L}(e_3)=(z_{k\ell})_{k,\ell}$ with $k,\ell=1,...,4$. Using $\mathrm{L}(x)=\mathrm{R}(x)+\mathrm{ad}(x)$, and the fact that, $[e_j,e_4]=0$ for $j=1,2,3$ and $e_i\cdot e_j-e_j\cdot e_i=[e_i,e_j]$  for all $i,j=1,2,3$, we obtain
\begin{equation}
\mathrm{L}(e_2) : \begin{cases}
e_2\cdot e_1=x_{12}e_1+x_{22}e_2+(x_{32}-1)e_3+x_{42}e_4,\\
e_2\cdot e_2=\sum_j y_{j2}e_j,\\
e_2\cdot e_3=(-x_{12}-y_{22})e_3+\sum_{j\neq 3}y_{j3}e_j,\\
e_2\cdot e_4=e_2,
\end{cases}
\end{equation}
\begin{equation}
\mathrm{L}(e_3):\begin{cases}
e_3\cdot e_1=(x_{13}+2)e_1+x_{23}e_2+(-x_{11}-x_{22})e_3+x_{43}e_4,\\
e_3\cdot e_2=y_{13}e_1+(y_{23}-2)e_2+(-x_{12}-y_{22})e_3+y_{43}e_4,\\
e_3\cdot e_3=z_{13}e_1+z_{23}e_2+(-x_{13}-y_{23})e_3+z_{43}e_4,\\
e_3\cdot e_4=e_3.
\end{cases}
\end{equation}
With $\mathrm{L}(e_4)=\mathrm{Id}_\h$. First we may assume that the upper left-block of the matrix $\mathrm{L}(e_3)$ is equal to zero, that is  $z_{13}=0$ and $z_{23}=0$. By applying $\psi_{\left(\begin{smallmatrix}1&\beta\\0&1\end{smallmatrix}\right)}$ or $\psi_{\left(\begin{smallmatrix}1&0\\\gamma&1\end{smallmatrix}\right)}$. This respects
$\mathrm{L}(e_4)=\mathrm{Id}_\h$ and it is not difficult to see that we can assume $z_{13}=0$ and $z_{23}=0$ or $z_{13}=0$ and $z_{23}=1$. The last case can be reduced to the first  by applying $\psi_{\left(\begin{smallmatrix}1&0\\\gamma&1\end{smallmatrix}\right)}$ and $\psi_{\left(\begin{smallmatrix}1&\beta\\1&\delta\end{smallmatrix}\right)}$.\\
\textbf{Case 1: } $\h=(\mathfrak{sl}_2(\mathbb{R})\oplus\mathbb{R}e_4,\cdot)$ is irreducible. According to the above, we have $z_{13}=0$, $z_{23}=0$, and from the LSA-structures, we extract the following equations:
\begin{eqnarray}
-2\,x_{1 3}^2-x_{1 3}y_{2 3} - x_{2 3}y_{1 3} + z_{4 3}&=&0\\
- 2\,y_{2 3}^2+x_{1 3}y_{2 3}-x_{2 3}y_{1 3}  + z_{4 3}&=&0\\
(x_{1 3} + y_{2 3} - 2)x_{2 3}&=&0\\
(x_{1 3} + y_{2 3} + 2)y_{1 3}&=&0
\end{eqnarray}
From Lemma \ref{irri2}, and the fact that the characteristic polynomial of $\mathrm{L}(e_3)$ is $\mathcal{P}=(\mathcal{X}-1)(\mathcal{X}+1)(\mathcal{X}-3)(\mathcal{X}+3)$, we obtain
\begin{eqnarray}
-z_{{43}} \left( x_{{13}}y_{{23}}-x_{{23}}y_{{13}}-2\,x_{{13}}+2
\,y_{{23}}-4 \right) 
+9&=&0\\
\left( x_{{13}}+y_{{23}}-z_{{43}}+1 \right)\left( x_{{13}}y_{{23}}-x_{{23}}y_{{13}}-3\,x_{{13}}+y_{{23}}-3
\right)  &=&0 \\
\left( x_{{13}}+y_{{23}}+z_{{43}}-1 \right)  \left( x_{{13}}y_{{
		23}}-x_{{23}}y_{{13}}-x_{{13}}+3\,y_{{23}}-3 \right)
&=&0\\
\left( 3\,x_{{13}}+3\,y_{{23}}-z_{{43}}+9 \right)  \left( x_{{13}
}y_{{23}}-x_{{23}}y_{{13}}-5\,x_{{13}}-y_{{23}}+5 \right) 
&=&0\\
\left( 3\,x_{{13}}+3\,y_{{23}}+z_{{43}}-9 \right)\left( x_{{13}}y_{{23}}-x_{{23}}y_{{13}}+x_{{13}}+5\,y_{{23}}+
5 \right) 
&=&0
\end{eqnarray}
It follows that $z_{43} = 1, 3$ or $9$. Then the remaining LSA-equations are very simple. It is easy to see that they have
four solutions, and we already know that, for each nonnegative integer $n$, the group $\mathbf{SL}_2(\mathbb{R})$ has an irreducible representation of dimension $n+1$, which is unique up to an isomorphism. Therefore, we can take the following solution 
\begin{center}
	$ x_{{11}}=0,x_{{12}}=0,x_{{13}}=-1,x_{{21}}=0,x_{{22}}=0,
	x_{{23}}=0,x_{{31}}=\frac{3}{y_{{13}}},x_{{32}}=0,x_{{41}}=\frac{ 3}{y_
		{{13}}}, $ 
\end{center}
\begin{center}
	$x_{{42}}=3,x_{{43}}=0,y_{{12}}=0,,y_{{22}}=0,y_{{23}}=-1,y_{{32}}=-{\frac {y_{{13}}}{4}},y_{{42}}={
		\frac {3\,y_{{13}}}{4}},y_{{43}}=0,z_{{43}}=3$.
\end{center}
We may also normalize $y_{13}$ to 1. That means we may take $y_{13}=1$. This solution is  given by the algebra $\h_3$.\\
\textbf{Case 2: } $\h=(\mathfrak{sl}_2(\mathbb{R})\oplus\mathbb{R}e_4,\cdot)$ is reducible, i.e., $\h=V\oplus V$. The characteristic polynomial is now $\mathcal{P}_c=(\mathcal{X}-1)^2(\mathcal{X}+1)^2$, and its minimal polynomial is $\mathcal{P}_{min}=(\mathcal{X}^2-1)$. Then $\mathcal{P}_{min}(\mathrm{L}(e_3))=0$, this implies  $z_{13}=0,z_{23}=0,z_{43}=1$, and this determines another 11 structure constants by linear equations. The
remaining LSA-structure equations then are almost trivial. After a short calculation we obtain a contradiction.

\begin{lemma}\label{irri1}
	Let $\h=(\mathfrak{so}(3)\oplus\mathbb{R}e_4,\cdot)$ be an LSA-structure on $\mathfrak{so}(3)\oplus\mathbb{R}e_4$. Then, $\h$ has a basis $\{v_0,v_1,v_2,v_3\}$ such that $\mathrm{L}(e_1)$, $\mathrm{L}(e_2)$ and $\mathrm{L}(e_3)$ have the following expressions$:$
	\begin{equation}
	\mathrm{L}(e_1)=\frac{1}{2}(-v_3,-v_2,v_1,v_0),~~\mathrm{L}(e_2)=\frac{1}{2}(-v_1,v_0,-v_3,v_2),~~\mathrm{L}(e_3)=\frac{1}{2}(-v_2,v_3,v_0,-v_1).
	\end{equation}
\end{lemma}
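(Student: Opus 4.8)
The plan is to determine the underlying $\mso$-module structure of $\h$ via the operators $\mathrm{L}(s)$, and then read off the normal form of $\mathrm{L}(e_1),\mathrm{L}(e_2),\mathrm{L}(e_3)$. Since here $e_4$ is a \emph{central} right-identity, $\mathrm{R}(e_4)=\mathrm{Id}_\h$ gives $s\cdot e_4=s$ and $e_4\cdot e_4=e_4$, while centrality $[s,e_4]=0$ yields $e_4\cdot s=s\cdot e_4=s$. Hence $\mathrm{L}(e_4)=\mathrm{Id}_\h$ and, crucially, $\mathrm{L}(s)e_4=s$ for every $s\in\mso$. As $\mathrm{L}$ is a representation of $\mso$, the four-dimensional space $\h$ becomes a real $\mso$-module, which is completely reducible because $\mso$ is semisimple.

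First I would classify the possible module types. The real irreducible representations of $\mso\cong\mathfrak{su}(2)$ have dimensions $1,3,5,\dots$ (the self-conjugate odd-dimensional ones) together with the quaternionic family of real dimensions $4,8,\dots$; in particular there is \emph{no} two-dimensional real irreducible. Consequently a four-dimensional real $\mso$-module is isomorphic to one of $\R^4_{\mathrm{triv}}$, to $W\oplus\R_{\mathrm{triv}}$ with $W$ the standard three-dimensional module, or to the irreducible four-dimensional (quaternionic, spin-$\tfrac12$) module. The heart of the proof is to discard the first two possibilities.

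The identity $\mathrm{L}(s)e_4=s$ rules out the purely trivial module at once, since it would force $s=0$. Suppose instead $\h\cong W\oplus\R_{\mathrm{triv}}$, and write $e_4=w+t$ along this decomposition. Then $\mathrm{L}(s)e_4=\mathrm{L}(s)w\in W$ equals $s$, so $\s\subseteq W$ and, by dimension, $\s=W$ as a subspace of $\h$. Since $W$ is an $\mathrm{L}(\s)$-submodule we get $s\cdot s'=\mathrm{L}(s)s'\in\s$, so $\s$ is closed under the product and its commutator is the $\mso$-bracket; thus $(\s,\cdot)$ would be a left-symmetric structure on the simple Lie algebra $\mso$. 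This contradicts the known non-existence of left-symmetric structures on semisimple Lie algebras (see \cite{B}); alternatively one reaches a contradiction by the same short structure-constant computation used in the reducible $\msl$-case treated above. Therefore $\h$ must be the irreducible four-dimensional module.

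It then remains to put $\mathrm{L}(e_1),\mathrm{L}(e_2),\mathrm{L}(e_3)$ into the stated normal form. I would realize the irreducible module concretely on $\mathbb{H}\cong\R^4$, with $\mso$ acting by one half of multiplication by the imaginary units, and choose the basis $\{v_0,v_1,v_2,v_3\}$ accordingly; the factor $\tfrac12$ records the weights $\pm\tfrac12$ of the spin-$\tfrac12$ representation. With this basis the relations $[\mathrm{L}(e_1),\mathrm{L}(e_2)]=\mathrm{L}(e_3)$, $[\mathrm{L}(e_1),\mathrm{L}(e_3)]=-\mathrm{L}(e_2)$, $[\mathrm{L}(e_2),\mathrm{L}(e_3)]=\mathrm{L}(e_1)$ fix the operators up to the evident normalization, giving exactly the displayed expressions. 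The main obstacle is the exclusion of $W\oplus\R_{\mathrm{triv}}$; once $\h$ is known to be irreducible, the final identification is routine linear bookkeeping.
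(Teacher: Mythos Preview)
Your proof is correct and follows the same strategy as the paper: analyze $\h$ as an $\mso$-module via the left multiplications $\mathrm{L}(s)$ and conclude it must be the irreducible four-dimensional (spin-$\tfrac12$) representation. The paper's argument is essentially a one-line sketch invoking only that the minimal nontrivial real $\mso$-representation has dimension $3$, implicitly relying on the $H^0(\G,\h)=0$ claim made in the preceding $\msl$ discussion to exclude trivial summands; your explicit elimination of the $W\oplus\R_{\mathrm{triv}}$ case---by showing it would force a left-symmetric structure on the simple algebra $\mso$ itself---fills in what the paper leaves tacit, so your version is more self-contained.
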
 
\begin{proof}
Semisimple Lie algebra $\mathfrak{so}(3)$ has a minimal representation of dimension  $3$, then  one possibility for $\h$  is that it is  irreducible (as an $\G$-module). Therefore, $\h$ has a basis $\{v_0,v_1,v_2,v_3\}$ such that $\mathrm{L}(e_1)$, $\mathrm{L}(e_2)$ and $\mathrm{L}(e_3)$ are described as Lemma \ref{irri1}. Note that this basis $\{v_0,v_1,v_2,v_3\}$ does not satisfy the LSA-condition. 
\end{proof}

\textbf{Lie Algebra:} $\h=(\mathfrak{so}(3)\oplus\mathbb{R}e_4, \cdot)$. Let $\mathrm{L}(e_1)=(x_{k\ell})_{k,\ell}, \mathrm{L}(e_2)=(y_{k\ell})_{k,\ell},\mathrm{L}(e_3)=(z_{k\ell})_{k,\ell}$ with $k,\ell=1,...,4$. Using $\mathrm{L}(x)=\mathrm{R}(x)+\mathrm{ad}(x)$, and the fact that, $[e_j,e_4]=0$ for $j=1,2,3$ and $e_i\cdot e_j-e_j\cdot e_i=[e_i,e_j]$  for all $i,j=1,2,3$, we obtain
\begin{equation}
\mathrm{L}(e_2) : \begin{cases}
e_2\cdot e_1=x_{12}e_1+x_{22}e_2+(x_{32}-1)e_3+x_{42}e_4,\\
e_2\cdot e_2=\sum_j y_{j2}e_j,\\
e_2\cdot e_3=\sum_{j}y_{j3}e_j,\\
e_2\cdot e_4=e_2,
\end{cases}
\end{equation}
\begin{equation}
\mathrm{L}(e_3):\begin{cases}
e_3\cdot e_1=x_{13}e_1+(1+x_{23})e_2+x_{33}e_3+x_{43}e_4,\\
e_3\cdot e_2=(y_{13}-1)e_1+\sum_{j\neq 1}y_{j3}e_j,\\
e_3\cdot e_3=\sum_{j}z_{j3}e_j,\\
e_3\cdot e_4=e_3.
\end{cases}
\end{equation}
With $\mathrm{L}(e_4)=\mathrm{Id}_\h$. Since the trace of $\mathrm{L}(e_1)$, $\mathrm{L}(e_2)$, $\mathrm{L}(e_3)$ is zero, we have $z_{33}=-x_{13}-y_{23}$, $y_{33}=-x_{12}-y_{22}$, $x_{33}=-x_{11}-x_{22}$. \\
According to Lemma \ref{irri1}, we have only one possibility for $\h$, that is $\h$ is irreducible. From the fact that the characteristic polynomial is independent from the choice of basis, we have $\mathcal{P}_c=(\mathcal{X}^2+\frac{1}{4})^2$  as a characteristic polynomial of $\mathrm{L}(e_3)$, and its minimal polynomial is $\mathcal{P}_{min}=(\mathcal{X}^2+\frac{1}{4})$, we have also $\mathrm{L}^2(e_3)+\frac{1}{4}\mathbf{I}=0$, from this one, we obtain $z_{13}=0, z_{23}=0, z_{33}=0$ and $z_{43}=-\frac{1}{4}$. This determines another 8 structure constants by linear equations. The same  we simplify $\mathrm{L}(e_1)$ and $\mathrm{L}(e_2)$. Then the remaining LSA-equations are almost trivial. It is easy to see that they have a unique solution, which is given by the algebra $\h_5$.
\\\\
\textbf{III. Algebras with  right-identity $e=e_1+\nu e_2+e_4$.}
\\\\
Let $\h$ be a Lie algebra and $\rho : \h\rightarrow\mathfrak{gl}(V)$ be a representation of $\h$.
A 1-cocycle $\varphi$ associated to $\rho$ is defined as a linear map from $\h$ to $V$ satisfying
\begin{equation}\label{1cocycle}
\varphi\big([x,y]\big)=\rho(x)\varphi(y)-\rho(y)\varphi(x),~~\text{for all }x,y\in\h.
\end{equation}
If, in addition, $\varphi$ is a linear isomorphism (thus $\dim V =
\dim \h)$, $(\rho,\varphi)$ is said to be bijective.

Let $(\rho,\varphi)$ be a bijective 1-cocycle, then it is easy to see that
\begin{equation}\label{newproduct}
x\ast y=\varphi^{-1}\big(\rho(x)\varphi(y)\big),~~\text{for all }  x,y\in\h,
\end{equation}
defines a left-symmetric algebra on $\h$ (Medina, \cite{Medina}). Conversely, for a left-symmetric algebra $(\rho,\mathrm{Id}_\h)$ is a bijective 1-cocycle of $\h$. There is a bijection between the set of the
isomorphism classes of bijective 1-cocycles of $\h$ and the set of left-symmetric algebras
on $\h$. Under this correspondence equivalent bijective 1-cocycles are mapped to isomorphic left-symmetric algebras, and vice versa \cite{Bai}. 

Note that in our case, we also have 
\begin{equation}\label{neutreele}
\varphi(x)=\rho(x)\varphi(e),~~ \text{for all } x\in\h,
\end{equation}
where $e$ is the unique right-identity of $\h$.
\\
Then a procedure is provided to classify left-symmetric algebras in terms of classification of equivalent classes of bijective
1-cocycles. Let $\h$ be a given Lie algebra with a basis $\{e_1,e_2,...,e_n\}$. For a representation $\rho : \h\rightarrow\mathfrak{gl}(V)$ ($\dim\h=\dim V$) with a basis $\{v_0,...,v_{n-1}\}$ of $V$, we can let $\rho(x)=\big(\rho_{ij}(x)\big)$ for any $x \in\h$, where $\rho_{ij} : \h\rightarrow\mathbb{R}$ be linear functions. On the other hand, let $\varphi :\h\rightarrow V$ be a 1-cocycle, then we can let $\varphi(x)=\sum^{n-1}_k\varphi_k(x)v_k$, where $\varphi_k : \h\rightarrow\mathbb{R}$ are linear functions. The conditions
of the representation $\rho$ and the 1-cocycle $\varphi$ can give a series of equations for linear
functions $\rho_{ij}$ and $\varphi_k$. For more details about this procedure see \cite{Bai}.

\textbf{Case 1:} If $\h=\mathfrak{sl}_2(\R)\oplus \R e_4=V\oplus V$. Let $e=e_1+\nu e_2+e_4$, where $\nu<0$. From Lemma \ref{irri2} and using $(\ref{1cocycle})$, $(\ref{neutreele})$. It is easy to see that $\varphi$ is given by
\begin{equation}
\varphi=\left( \begin {array}{cccc} a\nu+b&0&a&-a\nu+a-b
\\ \noalign{\medskip}0&a&-a\nu-b&b\\ \noalign{\medskip}\nu\,c+d&0
&c&-\nu\,c+c-d\\ \noalign{\medskip}0&c&-\nu\,c-d&d\end {array}
\right).
\end{equation}
$\varphi$ is bijective if and only if  $(ad - bc)\neq 0$. There  is no doubt that $(\ref{newproduct})$ defined  a unique left-symmetric product on $\h$, which is given by the algebra $\h^{\nu<0}_4$. As you can see, the condition $\mathrm{R}(e)=\mathrm{Id}_\h$, determines the expression of $\mathrm{L}(e_4)$.\\\\
\textbf{Case 2:} If $\h$ is irreducible (as an $\mathfrak{sl}_2(\mathbb{R})$-module), then $\h$ has a basis $\{v_0,v_1,v_2,v_3\}$ such that 
\begin{center}
	$\rho( e_3) v_j=(-3+2j)v_j$, $\rho(e_1)v_j=v_{j+1}$, $\rho(e_2)v_j=(4-j)v_{j-1}$ and $\rho(e_2)v_0=0$, 
\end{center} 
for $j=0,...,3$.

According to $(\ref{1cocycle})$, the bijective 1-cocycles are as follows:
\begin{equation}
\varphi=\left( \begin {array}{cccc} 0&0&-3\,u&a\\ \noalign{\medskip}u&2\,v&0&
b\\ \noalign{\medskip}0&0&v&c\\ \noalign{\medskip}v&0&0&d\end {array}
\right), ~~\text{with~~} v(av + 3cu)\neq 0.
\end{equation}
Let $e=e_1+\nu e_2+e_4$. The condition $(\ref{neutreele})$ leads to $\mathrm{det}(\varphi)=0$, as it is obvious. Therefore, and in this case $\h=\mathfrak{sl}_2(\mathbb{R})\oplus\mathbb{R}e_4$ does not admit a left-symmetric product.

\begin{proposition} \label{pr7}
Let $\G$ be an eight-dimensional Frobeniusian  non-solvable  Lie algebra. Then there exists
a basis $\{f_1,\ldots,f_8\}$ of $\G$ such that 
\begin{equation}
\om_0=f^{15}+f^{26}+f^{37}+f^{48}.
\end{equation}
and the non vanishing Lie brackets as listed in Table $\ref{tab3}$.
\end{proposition}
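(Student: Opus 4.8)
The plan is to make $\G$ concrete through the Lagrangian reduction of Section~2 and then simply read off the brackets from the left-symmetric data already classified in Theorem~\ref{the 2}. First I would invoke Corollary~\ref{co1}: since $(\G,\om)$ is Frobeniusian and non-solvable it carries the unique Lagrangian ideal $\J=\langle e_4,e_5,e_6,e_7\rangle$, so $\J^\perp=\J$ and $\h=\G/\J$ is four-dimensional. By Proposition~\ref{pr 1} the pairing $\om_\h$ identifies $\h$ with $\J^*$ and endows $\h$ with a left-symmetric product; as established just before Theorem~\ref{the 2}, $\h\cong\mathfrak{s}\oplus\R e_4$ with $\mathfrak{s}\in\{\mso,\msl\}$, and its LSA structure is one of those tabulated in Table~\ref{tab2}. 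By the reconstruction statement (Proposition~\ref{pr 3}) $\G$ is then isomorphic to a Lagrangian extension $\G_{\rho,\al}=\h\oplus\h^*$ with brackets (\ref{bra}), where $\rho$ is the dual of the left-multiplication representation $\mL$ of $\h$ and $\al\in Z^2_{L,\rho}(\h,\h^*)$; under this identification $\om$ becomes the canonical dual pairing.

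Next I would fix the adapted basis. Take $\{f_1,f_2,f_3,f_4\}$ to be the standard basis of $\h=\mathfrak{s}\oplus\R e_4$, with $f_1,f_2,f_3$ spanning $\mathfrak{s}$ and $f_4$ the central generator, and let $\{f_5,f_6,f_7,f_8\}$ be the dual basis of $\h^*\cong\J$, i.e.\ $f_{4+i}=f_i^*$. Because $\om$ is exactly the dual pairing of $\h$ and $\h^*$, one has $\om(f_i,f_{4+j})=\delta_{ij}$ and $\om(f_i,f_j)=\om(f_{4+i},f_{4+j})=0$, which is precisely $\om_0=f^{15}+f^{26}+f^{37}+f^{48}$. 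This settles the first assertion uniformly for every case, independently of which LSA occurs.

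Finally, the brackets follow from (\ref{bra}). For $i,j\le 4$ one has $[f_i,f_j]=[f_i,f_j]_\h+\al(f_i,f_j)$, contributing the semisimple (respectively $\aff$-type) part on $f_1,f_2,f_3,f_4$ together with the $\J$-valued correction coming from $\al$; the mixed brackets are $[f_i,f_{4+j}]=\rho(f_i)f_j^*=-f_j^*\circ\mL(f_i)$, so they are read directly off the left-multiplication matrices $\mL(f_i)$ of Table~\ref{tab2}; and $[f_{4+i},f_{4+j}]=0$ since $\h^*$ is abelian. Carrying out this substitution for each LSA $\h_1,\dots,\h_5$ (together with the family $\h_4^{\nu<0}$) of Theorem~\ref{the 2}, and normalizing $\al$ within its Lagrangian cohomology class, produces the list of non-vanishing brackets recorded in Table~\ref{tab3}.

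The main obstacle is bookkeeping rather than conceptual: one must transport the data of Table~\ref{tab2} through the dualization $\rho(x)=-\mL(x)^{T}$ without sign or index errors, choose the correct normalized representative of $\al$ in each case (equivalently, pin down the exact $\J$-valued terms in the $[f_i,f_j]$ so that the resulting algebra matches the Turkowski type identified in Proposition~\ref{pr5}), and verify case by case that the eight-dimensional algebra so obtained is indeed the asserted one. These are the routine but delicate computations summarized in Table~\ref{tab3}.
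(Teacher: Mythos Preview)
Your outline follows the paper's argument exactly: reduce via the Lagrangian ideal $\J$, identify $\h=\G/\J$ with one of the LSA's of Table~\ref{tab2}, reconstruct $\G$ as $\h\oplus\h^*$ with brackets~(\ref{bra}), and read off the canonical pairing as $\om_0$. The dualization $\rho(x)=-\mL(x)^{\mathsf T}$ is also correct.

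Where you diverge from the paper is in locating the actual work. You say one must ``normalize $\al$ within its Lagrangian cohomology class'' and ``pin down the exact $\J$-valued terms in $[f_i,f_j]$'', treating this as routine bookkeeping. But look at Table~\ref{tab3}: for every $\G_\rho^\ell$ the brackets $[f_i,f_j]$ with $i,j\le 4$ are purely the brackets of $\h$, with \emph{no} $\h^*$-component whatsoever. Table~\ref{tab3} is the extension with $\al=0$. So what has to be proved is not that some convenient representative of $[\al]$ exists, but that \emph{every} Lagrangian cocycle $\al\in Z^2_\rho(\h,\h^*)\cap\mathcal C^2_L(\h,\h^*)$ is already a coboundary $\partial_\rho\phi$ with $\phi\in\mathcal C^1_L(\h,\h^*)$, so that the map $(x,\xi)\mapsto(x,\xi+\phi(x))$ gives an isomorphism $(\G_{\rho,\al},\om_0)\cong(\G_{\rho,0},\om_0)$ of symplectic Lie algebras. (The symmetry $\phi\in\mathcal C^1_L$ is needed precisely so that this isomorphism preserves $\om_0$, not just the Lie bracket.) This vanishing $H^2_{L,\rho}(\h_\ell,\h_\ell^*)=0$ is the substantive step, and the paper establishes it by exhibiting, for each $\h_\ell$, an explicit symmetric $\phi$ depending linearly on the components $\al_{ij}^k$ (one case in the proof, the remaining five in the Appendix). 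Without this computation you have not shown that a single bracket table per $\h_\ell$ suffices, nor that the $\al$-correction can be removed; the step you call bookkeeping is the core of the argument.
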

\begin{proof} Let $\h$ be one of the left-symmetric algebras listed in
	Table \ref{tab2}.  From Proposition \ref{pr 3}, every
	eight-dimensional Frobeniusian  non-solvable  Lie algebras have the	form $(\G_{\rho,\al}=\h\oplus \h^*,\om_0)$ given by non-zero Lie	brackets  defined by $(\ref{bra})$, with $\al\in Z^2_\rho(\h,\h^*)$
	satisfies $(\ref{sym})$
	and  $\om_0$ is defined by the dual pairing of $\h$ and $\h^*$.
	It is straightforward to verify that, $\al\in Z^2_\rho\big(\h,\h^\ast\big)$, i.e.,
	\begin{equation}
	\big(\partial_\rho\alpha\big)(x,y,z)=\oint_{x,y,z}[[x,y],z]=0,~\text{for all}~x,y,z\in\h.
	\end{equation}
 We continue the proof in the case $\h^{\lambda>0}_2$, the other cases are treated in the Appendix. Let $\G_{\rho,\al}^2=\h^{\lambda>0}_2\oplus (\h^{\lambda>0}_2)^\ast$ and
	$\{f_1,\ldots,f_8\}$ be a basis of $\G_{\rho,\al}^2$, with
	$\h^{\lambda>0}_2=\langle  f_1,f_2,f_3,f_4\rangle$,
	$(\h^{\lambda>0}_2)^*=\langle f_5,f_6,f_7,f_8\rangle$ and
	$\al(f_i,f_j)=\sum_{k=5}^{8}\al_{ij}^kf_k$ for $i,j\in\{1,\ldots,4\}$. Let $\phi\in\mathcal{C}^1\big(\h_2^{\lambda>0},(\h_2^{\lambda>0})^\ast\big)$ defined by
	\begin{eqnarray*}
\phi(f_1)&=&\frac{\al_{24}^5}{3} f_5+\frac{\al_{24}^7-2\,\alpha_{14}^5}{6}f_7+\left(\frac{\al_{14}^5(2\,\lambda+6)}{6}-\frac{\al_{24}^7(\lambda-3)}{6}\right)f_8,\\
\phi(f_2)&=&-\frac{\al_{34}^6}{3} f_6-\frac{\alpha_{34}^7+2\,\al_{14}^6}{6}f_7+\left(\frac{\al_{14}^6(2\,\lambda-6)}{6}+\frac{\al_{34}^7(\lambda+3)}{6}\right)f_8,\\
\phi(f_3)&=&  \frac{\al_{24}^7-2\,\alpha_{14}^5}{6}f_5-\frac{\alpha_{34}^7+2\,\al_{14}^6}{6}f_6+(\alpha_{34}^5-\alpha_{24}^6)f_7+\left(\alpha_{24}^6(-1 + \lambda) - \alpha_{34}^5(1 + \lambda)\right)f_8,\\
\phi(f_4)&=&  \left(\frac{\al_{14}^5(2\,\lambda+6)}{6}-\frac{\al_{24}^7(\lambda-3)}{6}\right)f_5+\left(\frac{\al_{14}^6(2\,\lambda-6)}{6}+\frac{\al_{34}^7(\lambda+3)}{6}\right)f_6\\
 &&+\left(\alpha_{24}^6(-1 + \lambda) - \alpha_{34}^5(1 + \lambda)\right)f_7+\left(-\alpha_{24}^6(-1 + \lambda)^2 + \alpha_{34}^5(-1+\lambda^2) - 2\,\alpha_{13}^5\right)f_8.
	\end{eqnarray*}
It is easy to show that, $\partial_\rho\phi=\alpha$, for all $\alpha\in Z^2_\rho\big(\h_2^{\lambda>0},(\h_2^{\lambda>0})^\ast\big)\bigcap\mathcal{C}^2_L(\h_2^{\lambda>0},(\h_2^{\lambda>0})^\ast)$ and this implies $H^2_\rho\big(\h_2^{\lambda>0},(\h_2^{\lambda>0})^\ast\big)=0$. Note that for all Lie algebras $\h_j$ given in Table \ref{tab3}, we have	 $H^2_\rho\big(\h_j,(\h_j)^\ast\big)=0$. It is simply a matter of noting
	$(\G^2_{\rho,\alpha},\omega_0)=(\G^2_{\rho},\omega_0)$. By identifying
	the basis $\langle f_5,f_6,f_7,f_8\rangle$ with the dual basis of
	$\langle f_1,f_2,f_3,f_4\rangle$, $\om_0$ becomes
	\begin{equation}
	\om_0=f^{15}+f^{26}+f^{37}+f^{48},
	\end{equation}
	and the non-vanishing Lie brackets are given in Table $\ref{tab3}$.
	
\end{proof}

\begin{proposition}\label{pr I}
	Let $(\G,\om)$ be an eight-dimensional Frobeniusian  indecomposable non-solvable  Lie algebra. Then
$(\G,\om)$ is symplectomorphically equivalent to one of the following Lie algebras equipped
	with a symplectic form as follows:
	
{\renewcommand*{\arraystretch}{1.3}
	\begin{tabular}{ll}
$\mL_{8,3} :$&$ \omega=e^{17}+e^{25}+e^{36}-2e^{48}.$
\\		
	
	$\mL_{8,4}^{p\neq0} :$&$ \omega=e^{14}-e^{15}+e^{26}+e^{27}-e^{34}-e^{35}+2e^{48}-2e^{58}-2pe^{68}+2pe^{78}.$\\
	$\mL_{8,16} :$&$ \omega=e^{15}-e^{16}-e^{27}-e^{34}-e^{58}-e^{68}-e^{78}.$\\
	$\mL_{8,17}^{p\in]-1,1]\setminus\{0\}} :$&$\omega=-e^{14}-e^{17}-e^{25}+e^{36}-e^{48}+pe^{78}.$\\		
	$\mL_{8,17}^{p=0}(a_{67}=0) :$&$ \omega=-e^{14}-e^{17}-e^{25}+e^{36}-e^{48}.$\\
	$\mL_{8,18}^{p>0} :$&$\omega=pe^{15}-e^{16}-e^{27}-pe^{34}+e^{48}-p^2e^{58}-p(e^{68}+e^{78}).$\\
	$\mL_{8,20}:$&$ \omega=-e^{15}+\frac{1}{2} e^{17}-2e^{26}-e^{34}+\frac{1}{2} e^{36}-e^{58}-\frac{1}{6} e^{78}.$	
\end{tabular}}	
\end{proposition}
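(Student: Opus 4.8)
The plan is to turn Proposition~\ref{pr I} into the transport of a single normal form through explicit isomorphisms, relying on the structural results already in place. By Proposition~\ref{pr 3}, symplectomorphism classes of symplectic Lie algebras with a Lagrangian ideal correspond bijectively to triples $(\h,\cdot,[\al])$ with $[\al]\in H^2_{L,\rho}(\h,\h^\ast)$. For an eight-dimensional Frobeniusian indecomposable non-solvable $(\G,\om)$, Corollary~\ref{co1} provides the Lagrangian ideal $\J=\langle e_4,e_5,e_6,e_7\rangle$, so the quotient $\h=\G/\J\cong\s\oplus\R e_4$ with $\s\in\{\mso,\msl\}$ carries one of the left-symmetric products classified in Theorem~\ref{the 2} and listed in Table~\ref{tab2}. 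The first step is therefore to record that $(\G,\om)$ is determined, up to symplectomorphism, by the pair $(\h_j,[\al])$.

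Next I would eliminate the cocycle. By Proposition~\ref{pr7}, for each $\h_j$ one constructs a one-cochain $\phi$ with $\partial_\rho\phi=\al$ for every $\al\in Z^2_\rho(\h_j,\h_j^\ast)\cap\mathcal{C}^2_L(\h_j,\h_j^\ast)$, whence $H^2_\rho(\h_j,\h_j^\ast)=0$. I would stress that $\phi$ is moreover Lagrangian: its coefficient array in the dual pairing $\h_j\times\h_j^\ast\to\R$ is symmetric, so $\phi(x)(y)=\phi(y)(x)$ and $\phi\in\mathcal{C}^1_L(\h_j,\h_j^\ast)$. Hence $Z^2_{L,\rho}(\h_j,\h_j^\ast)=B^2_{L,\rho}(\h_j,\h_j^\ast)$ and $H^2_{L,\rho}(\h_j,\h_j^\ast)=0$, so each $\h_j$ gives a single symplectomorphism class represented by the standard model $(\G_{\rho_j},\om_0)$, with $\om_0=f^{15}+f^{26}+f^{37}+f^{48}$ and the brackets of Table~\ref{tab3}.

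The third step identifies each model with a concrete Turkowski algebra. For every $j$ I would exhibit an explicit linear isomorphism $\Phi_j$ carrying the extension basis $\{f_1,\dots,f_8\}$ to a basis of the matching algebra $\mL_{8,m}$ from Proposition~\ref{proforsym}, check that it intertwines the brackets of Table~\ref{tab3} with Turkowski's structure constants, and then set $\om=(\Phi_j^{-1})^\ast\om_0$. As the pushforward of $\om_0$ under a Lie-algebra isomorphism, $\om$ is automatically symplectic, and expanding it in the dual $e$-basis yields the displayed forms, which are the specializations of the general two-cocycles of Proposition~\ref{pr5} obtained by fixing the free parameters $a_{ij}$ to suitable constants. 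The algebra $\mL^0_{8,17}$ needs separate care: it is the unique Frobeniusian case with $H^2(\G)\neq0$, and $\J$ is isotropic only when $a_{67}=0$; for $a_{67}\neq0$ the form pairs $e_6$ with $e_7$, so $\J$ fails to be Lagrangian and the algebra belongs to Type~II with symplectic ideal $\A$ (Corollary~\ref{co2}). This explains the entry $\mL^{p=0}_{8,17}(a_{67}=0)$.

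The main obstacle lies entirely in the bookkeeping of the third step: matching the left-symmetric parameters $\lambda,\mu,\nu$ of Table~\ref{tab2} with Turkowski's parameter $p$, pinning down the admissible ranges ($\mu>0$, $\lambda>0$, $\nu<0$, $p\in\,]-1,1]\setminus\{0\}$, $p>0$), and producing change-of-basis matrices $\Phi_j$ that simultaneously carry the Table~\ref{tab3} brackets to Turkowski's normal form and $\om_0$ to the listed $\om$. No further argument is needed to separate the classes: pairwise non-isomorphism of the listed Lie algebras is inherited from Turkowski's classification, and uniqueness of the symplectic structure on each is exactly the vanishing $H^2_{L,\rho}(\h_j,\h_j^\ast)=0$ established in the second step.
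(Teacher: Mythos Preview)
Your proposal is correct and follows essentially the same route as the paper's proof: reduce via Proposition~\ref{pr 3} and Corollary~\ref{co1} to the left-symmetric quotients $\h_j$ of Table~\ref{tab2}, kill the cocycle by observing that the one-cochains $\phi$ built in Proposition~\ref{pr7} are Lagrangian (so $H^2_{L,\rho}(\h_j,\h_j^\ast)=0$), and then transport $\om_0$ through explicit isomorphisms to Turkowski's algebras (these are the maps of Table~\ref{tab4}). Your handling of the $\mL^0_{8,17}$ split at $a_{67}=0$ is also exactly what the paper does, and slightly more explicit in motivation.
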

\begin{proof}
	
	The Proposition \ref{pr7} confirms that for each Lie algebra $\G_\rho^i$ of  Table \ref{tab3} there  exist a basis	$\{f_1,\ldots,f_8\}$ such that the symplectic structure is given by
	\[\om_0=f^{15}+f^{26}+f^{37}+f^{48},\]
	and the non vanishing Lie brackets are given in Table $\ref{tab3}$. Then we construct a family of isomorphisms from $\G_\rho^i$ to the eight-dimensional Frobeniusian  non-solvable  Lie algebras $\mL_{8,j}$ given by Turkowski \cite{T}. 
	Each isomorphism  $\Psi$ (see, Table \ref{tab4}) is given  from $\{f_1,\ldots,f_8\}$ to $\{e_1,\ldots,e_8\}$. The new symplectic structures $\om_j$ in $\mL_{8,j}$ are given  by $\om_j=\Psi_*(\omega_0)$.
	
\textit{Uniqueness of the symplectic structures $\omega_j$.}	 From Proposition \ref{pr2}, the extension triple $(\h,\cdot, [\alpha])$, where $[\alpha]\in H^2_{L,\rho}(\h,\h^\ast)$ induces a bijection between isomorphism classes of symplectic Lie algebras with Lagrangian ideal and isomorphism classes of left-symmetric Lie algebras with symplectic extension cohomology class. 

It is easy to see that, for the case given in Proposition \ref{pr7},  $\phi\in C^1_L\big(\h^{\lambda>0}_2,(\h^{\lambda>0}_2)^\ast\big)$, that is, 
\begin{center}
$\phi(x)(y)-\phi(y)(x)=0$, for all $x,y\in\h_2^{\lambda>0}$. 
\end{center}
Therefore, $\al=\partial_\rho\phi\in
\partial_\rho C^1_L\big(\h^{\lambda>0}_2,(\h^{\lambda>0}_2)^\ast\big)=B^2_{L,\rho}(\h^{\lambda>0}_2,(\h^{\lambda>0}_2)^\ast\big)$, and this implies that $$H^2_{L,\rho}(\h^{\lambda>0}_2,(\h^{\lambda>0}_2)^\ast\big)=0.$$
Hence the  uniqueness. Clearly, we have $H^2_{L,\rho}(\h_j,(\h_j)^\ast\big)=0$ for all Lie algebras listed in Table \ref{tab3}. Which completes the proof.
\end{proof}
\begin{remark}
About the uniqueness of symplectic structures. Note that, the symplectic form in general position given in Proposition $\ref{pr5}$ (for all Frobeniusian Lie algebras) admits a unique Lagrangian ideal. Therefore, all symplectic structures in an eight-dimensional Frobeniusian  indecomposable non-solvable  Lie algebra admit a Lagrangian ideal. A further feature of these structures is that they are symplectically isomorphic.
\end{remark}

\subsection{Type II}
Now consider the symplectic Lie algebras $\mL^{p,-p\neq0}_{8,7},\;\mL^0_{8,8},\;\mL^{0,q\neq0}_{8,9}$ and $\mL^0_{8,17}$ for $a_{67}\not=0$, denote by $(\G,\om)$ one of these algebras. By Corollary \ref{co2}, $\G$ admit $\mathfrak{a}=\langle e_6,e_7\rangle$ as a symplectic ideal then $\G$ is a semi-direct product
\[\G=\mathfrak{a}^{\perp}\ltimes\mathfrak{a}.\]
On the one hand, the pair $(\mathfrak{a}^{\perp},\om_{|\mathfrak{a}^{\perp}})$ is a six-dimensional  symplectic non-solvable  Lie algebra so $\mathfrak{a}^{\perp}$ is isomorphic to the Lie algebra of affine transformations  $\aff$. On the other hand, the non-vanishing Lie brackets in 
$\mathfrak{a}^{\perp}=\langle e_1,\ldots,e_5,e_8\rangle$ are given by
\begin{eqnarray*}
&&[e_1,e_2]=2e_2,\, [e_1,e_3]=-2e_3,\, [e_1,e_4]=e_4,\, [e_1, e_5]=-e_5,\,[e_2,e_3]=e_1\\&&[e_2,e_5] = e_4,\, [e_3,e_4] = e_5,\, [e_4,e_8] = e_4,\,[e_5,e_8] = e_5.
\end{eqnarray*}
The affine Lie algebra $\aff$ admits symplectic structures and all of them are symplectically isomorphic. Therefore, there are automorphisms of $\mathfrak{a}^{\perp}$ such that the restriction of the symplectic form is given by
\[\om_{|\mathfrak{a}^{\perp}}=e^{12}+e^{15}-e^{34}-e^{58}.\]
\begin{proposition}\label{pr II} Let $(\G,\om)$ be one of the following symplectic Lie algebras  $\mL^{p,-p\neq0}_{8,7},\;\mL^{0}_{8,8},\;\mL^{0,q\neq0}_{8,9}$ and $\mL^0_{8,17}$ for $a_{67}\not=0$. Then $\om$  is symplectomophic to
	\[\om=e^{12}+e^{15}-e^{34}-e^{58}\mp e^{67}.\]	
	\end{proposition}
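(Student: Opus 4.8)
The plan is to exploit the decomposition $\G=\mathfrak{a}^\perp\ltimes\mathfrak{a}$ obtained above, normalizing the two symplectic blocks $\mathfrak{a}^\perp$ and $\mathfrak{a}$ separately while keeping track of the fact that the normalizing maps must assemble into a genuine automorphism of the whole semidirect product. Since $\mathfrak{a}=\langle e_6,e_7\rangle$ is a $2$-dimensional symplectic ideal, $\om$ restricts to a non-degenerate form $c\,e^{67}$ on it with $c=a_{67}\neq0$. Starting from the general cocycle of Proposition \ref{pr5}, the only terms coupling $\mathfrak{a}$ to the complement are $a_{68}e^{68}+a_{78}e^{78}$; because $a_{67}\neq0$, the substitution $e_8\mapsto e_8+\frac{a_{78}}{a_{67}}e_6-\frac{a_{68}}{a_{67}}e_7$ makes these vanish. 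I would first check that this substitution is a Lie-algebra automorphism (it alters $e_8$ only by elements of the abelian ideal $\mathfrak{a}$, so one verifies directly that the brackets listed above are preserved), after which $\om$ becomes block diagonal, $\om=\om|_{\mathfrak{a}^\perp}+c\,e^{67}$ with $\mathfrak{a}^\perp=\langle e_1,\ldots,e_5,e_8\rangle\cong\aff$.

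Next I would normalize each block. By the uniqueness of the symplectic structure on $\aff$ recalled in the six-dimensional discussion, there is $\phi\in\mathrm{Aut}(\mathfrak{a}^\perp)$ with $\phi^\ast(\om|_{\mathfrak{a}^\perp})=e^{12}+e^{15}-e^{34}-e^{58}$. The essential step is to promote $\phi$ to an automorphism of $\G$: denoting by $\pi\colon\mathfrak{a}^\perp\lr\mathfrak{gl}(\mathfrak{a})$ the adjoint action on $\mathfrak{a}$, I would produce $\psi\in\mathrm{GL}(\mathfrak{a})$ with $\pi(\phi(x))=\psi\,\pi(x)\,\psi^{-1}$ for all $x$, so that $\Phi=(\phi,\psi)$ respects the semidirect bracket relations of the form $(\ref{bra})$. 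A final diagonal rescaling of $\mathfrak{a}$, again chosen to commute with $\pi$, brings $c\,e^{67}$ to $\pm e^{67}$, yielding the stated form $\om=e^{12}+e^{15}-e^{34}-e^{58}\mp e^{67}$.

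The hard part will be this compatibility constraint: the $\aff$-automorphism $\phi$ normalizing $\om|_{\mathfrak{a}^\perp}$ is not arbitrary, since it must admit an intertwiner $\psi$ on $\mathfrak{a}$, and the admissible rescalings of $\mathfrak{a}$ are exactly those in the normalizer of $\pi$ in $\mathrm{GL}(\mathfrak{a})$. I would therefore compute, for each of the four algebras $\mL^{p,-p\neq0}_{8,7}$, $\mL^0_{8,8}$, $\mL^{0,q\neq0}_{8,9}$ and $\mL^0_{8,17}$, the $2$-dimensional representation $\pi$ of $\aff$ on $\mathfrak{a}=\langle e_6,e_7\rangle$ and determine which scalings of $c\,e^{67}$ it permits. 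Whether the sign of $c$ can be absorbed depends on whether some admissible pair $(\phi,\psi)$ reverses the orientation of $e_6\we e_7$; the residual sign that cannot be removed is precisely the $\mp$ recorded in the normal form.
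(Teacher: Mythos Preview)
Your proposal is correct and follows essentially the same strategy as the paper: exploit the semidirect product $\G=\aff\ltimes\mathfrak{a}$, use the uniqueness of the symplectic form on $\aff$ to normalize $\om|_{\mathfrak{a}^\perp}$, kill the cross terms $a_{68}e^{68}+a_{78}e^{78}$ by shifting $e_8$ into $\mathfrak{a}$, and then rescale $\mathfrak{a}$, leaving only the sign of $a_{67}$. The paper carries this out in a slightly different order---it first writes down the block form of $\mathrm{Aut}(\G)$ (with an arbitrary $\phi\in\mathrm{Aut}(\aff)$ in the top block and a scalar-plus-shift $\begin{smallmatrix}a&0\\b&a\end{smallmatrix}$ on $\mathfrak{a}$) and then applies explicit matrices $\Psi_1,\Psi_2,\Psi_3$ all at once---so in particular the ``hard part'' you flag (lifting the normalizing $\phi$) is handled there by exhibiting the lift directly rather than by an abstract intertwiner argument, and the residual sign comes from $\det\begin{smallmatrix}a&0\\b&a\end{smallmatrix}=a^2>0$.
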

	\begin{proof}
Based on what has preceded, it is sufficient to compute all automorphisms of $\mathfrak{a}^{\perp}$ in order to have automorphisms of $\G=\mathfrak{a}^{\perp}\ltimes\mathfrak{a}$. We get that, any automorphism of $\G$ is of the following form
\begin{equation}
\Psi:=\left(
\begin{array}{c|cccc}
\phi&&0& \\\hline
\ast&x&&\ast&\\
\ast&\ast&&x&
\end{array}
\right)\in\mathrm{Aut}(\mathfrak{a}^{\perp}\ltimes\mathfrak{a}),
\end{equation}
where $\phi\in\mathrm{Aut}(\aff)$ and $x\in\R^\ast$. On the other hand, we have
\begin{center}
$\omega_\lambda=e^{12}+e^{15}-e^{34}-e^{58}+\lambda e^{67}$.
\end{center}
In conclusion, the symplectic structures $\om_j$ in $\G=\mathfrak{a}^{\perp}\ltimes\mathfrak{a}$ are given by $\omega_j=\Psi^\ast\big(\omega_\lambda\big)$.

We give the detailed proof for  Lie algebra $\G=\mL_{8,8}^0$, similar treatment is given to all remaining cases. 

Let $\G=\mathfrak{a}^{\perp}\ltimes\mathfrak{a}$, To simplify, let us note  $\mathfrak{a}^{\perp}=\mathrm{span}(e_1,\ldots,e_6)$ and $\mathfrak{a}=\mathrm{span}(e_7,e_8)$. The symplectic structures becomes
\begin{align}\label{wl88}
\om&=a_{12}e^{12}+a_{13}e^{13}+a_{23}e^{23}+a_{78}e^{78}+a_{68}e^{68}+a_{78}e^{78}&\nonumber\\&+a_{25}(e^{14}+e^{25}+e^{46})
+a_{34}(-e^{15}+e^{56}+e^{34}), 
\end{align} 
with $a_{7 8}(a_{1 2}a^2_{34} + a_{1 3}a^2_{25} + 2a_{2 3}a_{25}a_{34})\neq 0$, and 
\[\om_{|\mathfrak{a}^{\perp}}=a_{12}e^{12}+a_{13}e^{13}+a_{23}e^{23}+a_{25}(e^{14}+e^{25}+e^{46})+a_{34}(-e^{15}+e^{34}+e^{56}),\]
with $(a_{12}a_{34}^2+a_{13}a_{25}^2 + 2a_{23}a_{25}a_{3 4})\neq 0$. Let $\Psi_1,\Psi_2$ and $\Psi_3\in\mathrm{Aut}(\mathfrak{a}^{\perp}\ltimes\mathfrak{a})$ given as follows
\begin{center}
$\Psi_j:=\left(
   \begin{array}{c|cccc}
\phi_j&&0& \\\hline
      \ast&x&&\ast&\\
      \ast&\ast&&x&
   \end{array}
\right),$
\end{center}
where, $x\in\R^\ast$, $\phi_j\in\mathrm{Aut}(\mathfrak{a}^{\perp})$ and $\Psi_j$ are given below. This form can be found by applying the definition of Lie algebra automorphisms. On the other hand, one applies the definition making use of the automorphisms
of symplectic Lie algebras (see below for instance). The determinant of $\omega_{|\mathfrak{a}^{\perp}}$ is noted by  $\delta$, and we have 
\\\\
If $a_{25}\neq 0$, then
\begin{center}
$\omega_0:=\phi^\ast_1(\om_{|\mathfrak{a}^{\perp}})=e^{12}+e^{15}-e^{34}-e^{56}$,
\end{center}
where,
\begin{center}
$\phi_1:=\left(\begin {array}{cccccc} -1&{\frac {a_{{34}}a_{{25}}}{{\delta}}}&0&0&0&0\\ \noalign{\medskip}-2\,{\frac {a_{{34}}}{a_{{25}}}}
&{\frac {{a_{{34}}}^{2}}{{\delta}}}&-{\frac {{\delta}}{{a_{{25}}^2}
}}&0&0&0\\ \noalign{\medskip}0&-{\frac {{a_{{25}}^2}}{{\delta
}}}&0&0&0&0\\ \noalign{\medskip}{\frac {3\,a_{{12}}a_{{34}}+2\,a_{{2
3}}a_{{25}}}{2\,{a_{{25}}^2}}}&-{\frac {a_{{34}} \left( a_{{12}
}a_{{34}}+a_{{23}}a_{{25}} \right) }{a_{{25}}{\delta}}}&{\frac {
{\delta}\,a_{{12}}}{2\,{a_{{25}}^3}}}&-{\frac {a_{{34}}}{{\delta}}}&-\frac{1}{a_{{25}}}&{\frac {a_{{12}}a_{{34}}+2\,a_{{23}}a_{{25}}}{2\,{a_{{25}}^2}}}\\ \noalign{\medskip}-{\frac {a_{{12}}}{2\,
a_{{25}}}}&{\frac {a_{{12}}a_{{34}}+a_{{23}}a_{{25}}}{{\delta}}
}&0&{\frac {a_{{25}}}{{\delta}}}&0&{\frac {a_{{12}}}{2\,a_{{25}}}
}\\ \noalign{\medskip}0&0&0&0&0&1\end {array} \right). 
$
\end{center}
If $a_{25}=0$, this implies that $a_{12}a_{34}\neq 0$, and we have
\begin{center}
$\omega_1:=\phi^\ast_2(\om_{|\mathfrak{a}^{\perp}})=e^{13}+e^{14}+e^{25}+e^{46}$,
\end{center}
where,
\begin{center}
$\phi_2:=\left( \begin {array}{cccccc} -1&0&0&0&0&0\\ \noalign{\medskip}0&0&-\frac{1}{
a_{{12}}}&0&0&0\\ \noalign{\medskip}0&-a_{{12}}&0&0&0&0
\\ \noalign{\medskip}-{\frac {a_{{13}}}{2\,a_{{34}}}}&0&-{\frac {a_{{23}}}{a_{{12}}a_{{34}}}}&0&-{\frac {1}{a_{{12}}a_{{34}}}}&-{
\frac {a_{{13}}}{2\,a_{{34}}}}\\ \noalign{\medskip}{\frac {a_{{23}}
}{a_{{34}}}}&-{\frac {a_{{12}}a_{{13}}}{2\,a_{{34}}}}&0&\frac{1}{a_{{34}}
}&0&-{\frac {a_{{23}}}{a_{{34}}}}\\ \noalign{\medskip}0&0&0&0&0
&1\end {array} \right)$.
\end{center}
In addition, we have
\begin{center}
$\phi^\ast_3(\om_1)=\omega_0$,
\end{center}
where,
\begin{center}
$ \phi_3:=\left( \begin {array}{cccccc} -1&0&0&0&0&0\\ \noalign{\medskip}0&0&-1
&0&0&0\\ \noalign{\medskip}0&-1&0&0&0&0\\ \noalign{\medskip}0&0&0&0&-1
&0\\ \noalign{\medskip}0&0&0&1&0&0\\ \noalign{\medskip}0&0&0&0&0&1
\end {array} \right)$.
\end{center}
As a result, we obtain by completing the automorphisms $\phi_1,\phi_2,\phi_3\in\mathrm{Aut}(\mathfrak{a}^{\perp})$ for will be  automorphisms $\Psi_1,\Psi_2,\Psi_3\in\mathrm{Aut}(\mathfrak{a}^{\perp}\ltimes\mathfrak{a})$, i.e., 
\begin{center}
$\Psi_j:=\left(
   \begin{array}{cccc|cccccc}
& \phi_j&&&0&&0 \\\hline
     0 &\ldots&0&c&a&&0&\\
     0 &\ldots&0&d&b&&a
   \end{array}
\right)$,
\end{center}
with $a\in\R^\ast$. Consider the general form of $\mL_{8,8}^0$ given by $(\ref{wl88})$, and let $c=-\frac{  a_{6 8}}{a_{7 8}}$ and $d=\frac{a_{6 7}}{a_{7 8}}$, we have \begin{eqnarray*}
\Omega_0&:=&\Psi^\ast_1(\omega)=\omega_0+a^2\,a_{78}e^{78},\\
\Omega_1&:=&\Psi^\ast_2(\omega)=
\omega_1+a^2\,a_{78}e^{78},\\
\Omega_0&:=&\Psi^\ast_3(\Omega_1),
\end{eqnarray*}
for all $a\in\R^\ast$. The result is obtained by taking $a=\frac{1}{\sqrt{|a_{78}|}}$. Similar computations on each symplectic Lie algebra complete
the proof. 
\end{proof}

\section{Appendix} 
This appendix contains Table \ref{pr2} which corresponds to Theorem \ref{the 2}, which shows the left-symmetric product of $\so(3)\oplus\R e_4$ and $\mathfrak{sl}_2(\R)\oplus \R e_4$.  Table \ref{tab3} presents the new Lie brackets of Frobeniusian non-solvable Lie algebras. For each Lie algebra $\G_\rho$ in Tables \ref{tab3}, we give all the isomorphisms from the $\G_\rho$ to Turkowski algebras. Finally, we give the rest of the proof of  Proposition \ref{pr7}.
\subsection{Left-symmetric product in $\so(3)\oplus\R e_4$ and $\mathfrak{sl}_2(\R)\oplus \R e_4$}
{\renewcommand*{\arraystretch}{1.3}
	\begin{longtable}{|c|l|}
		\hline
		Algebra with  $\mathfrak{s}=\msl$ &  Left-symmetric product
		\\			\hline
		\multirow{5}{*}{$\mathcal{\h}_1$}&$e_1\cdot e_2=\frac{1}{2}(e_1+e_3+e_4), e_1\cdot e_3=-e_1,\, e_1\cdot e_4=e_1,$\\&$e_2\cdot e_1=\frac{1}{2}(e_1-e_3+e_4), e_2\cdot e_3=e_2, e_2\cdot e_4=\frac{1}{2}(-e_1+2e_2+e_3-e_4),$\\
		&$e_3\cdot e_1=e_1, e_3\cdot e_2=-e_2, e_3\cdot e_3=e_1+e_4, e_3\cdot e_4=-e_1+e_3$,\\
		&$e_4\cdot e_1=e_1, e_4\cdot e_2=\frac{1}{2}(-e_1+2e_2+e_3-e_4), e_4\cdot e_3=-e_1+e_3,$ \\
		&$e_4\cdot e_4=-e_1+e_4$.\\
		\hline
		\multirow{5}{*}{$\h^{\lambda>0}_2$}&$e_1\cdot e_2=\frac{1+\lambda}{2}e_3+\frac{1}{2}e_4, e_1\cdot e_3=-e_1,\,\,e_1\cdot e_4=(1+\lambda)e_1,$\\
		&$e_2\cdot e_1=\frac{\lambda-1}{2}e_3+\frac{1}{2}e_4, e_2\cdot e_3=e_2, e_2\cdot e_4=(1-\lambda)e_2,$\\
		&$e_3\cdot e_1=e_1, e_3\cdot e_2=-e_2, e_3\cdot e_3=\lambda e_3+e_4, e_3\cdot e_4=(1-\lambda^2)e_3-\lambda e_4$,\\
		&$e_4\cdot e_1=(1+\lambda)e_1, e_4\cdot e_2=(1-\lambda)e_2, e_4\cdot e_3=(1-\lambda^2)e_3-\lambda e_4,$\\
		&$ e_4\cdot e_4=\lambda(\lambda^2-1)e_3+(1+\lambda^2)e_4$.\\\hline
		\multirow{5}{*}{$\h_3$}&$e_1\cdot e_1= 3(e_3+e_4), e_1\cdot e_2=3e_4, e_1\cdot e_3=-e_1,$\\
		&$e_2\cdot e_1=-e_3+3e_4, e_2\cdot e_2=-\frac{1}{4}e_3+\frac{3}{4}e_4, e_2\cdot e_3=e_1-e_2,$\\
		&$e_3\cdot e_1=e_1, e_3\cdot e_2=e_1-3e_2, e_3\cdot e_3=2e_3+3e_4.$\\
		&$e_j\cdot e_4= e_4\cdot e_j=e_j,~ j=1,\ldots,4.$
		\\\hline
		
		\multirow{5}{*}{$\h_4^{\nu<0}$}&$e_1\cdot e_2=\frac{1}{2}(e_1+\nu e_2+e_3+e_4), e_1\cdot e_3=-e_1$, \\
		&$e_1\cdot e_4=(-\frac{\nu}{2}+1)e_1-\frac{\nu^2}{2}e_2-\frac{\nu}{2}(e_3+e_4),$\\
		&$e_2\cdot e_1=\frac{1}{2}(e_1+\nu e_2-e_3+e_4), e_2\cdot e_3=e_2,$\\ &$e_2\cdot e_4=-\frac{1}{2}e_1+(-\frac{\nu}{2}+1)e_2+\frac{1}{2}(e_3-e_4),$\\
		&$e_3\cdot e_1=e_1, e_3\cdot e_2=-e_2, e_3\cdot e_3=e_1+\nu e_2+e_4$\\
		&$e_3\cdot e_4=-e_1+\nu e_2+e_3,$\\
		&$e_4\cdot e_1=(-\frac{\nu}{2}+1)e_1-\frac{\nu^2}{2}e_2-\frac{\nu}{2}(e_3+e_4),$\\
		&$e_4\cdot e_2=-\frac{1}{2}e_1+(-\frac{\nu}{2}+1)e_2+\frac{1}{2}(e_3-e_4),$\\
		&$e_4\cdot e_3=-e_1+\nu e_2+e_3, e_4\cdot e_4=(\nu-1)e_1+\nu(\nu-1)e_2+(\nu+1)e_4$.
		\\\hline\hline
						Algebra with $\mathfrak{s}=\mathfrak{so}(3)$& Left-symmetric product
				\\			\hline
				\multirow{5}{*}{$\h_5^{\mu>0}$}&$e_1\cdot e_1=-\frac{\mu}{4}e_1-\frac{1}{4}e_4, e_1\cdot e_2=\frac{1}{2}e_3, e_1\cdot e_3=-\frac{1}{2}e_2, e_1\cdot e_4=(\frac{\mu^2}{4}+1)e_1+\frac{\mu}{4}e_4,$\\
				&$e_2\cdot e_1=-\frac{1}{2}e_3,  e_2\cdot e_2=-\frac{\mu}{4}e_1-\frac{1}{4}e_4, e_2\cdot e_3=\frac{1}{2}e_1, e_2\cdot e_4=e_2+\frac{\mu}{2}e_3,$\\
				&$e_3\cdot e_1=\frac{1}{2}e_2, e_3\cdot e_2=-\frac{1}{2}e_1, e_3\cdot e_3=-\frac{\mu}{4}e_1-\frac{1}{4}e_4, e_3\cdot e_4=-\frac{\mu}{2}e_2+e_3, $\\
				&$e_4\cdot e_1=(\frac{\mu^2}{4}+1)e_1+\frac{\mu}{4}e_4, e_4\cdot e_2=e_2+\frac{\mu}{2}e_3,  e_4\cdot e_3=-\frac{\mu}{2}e_2+e_3$, \\
				&$e_4\cdot e_4=-\frac{\mu(\mu^2+4)}{4}e_1+(-\frac{\mu^2}{4}+1)e_4$.\\\hline
				\multirow{5}{*}{$\h_6$}&$e_1\cdot e_1=-\frac{1}{4}e_4, e_1\cdot e_2=\frac{1}{2}e_3,\,\,e_1\cdot e_3=-\frac{1}{2}e_2,$\\
				&$e_2\cdot e_1=-\frac{1}{2}e_3, e_2\cdot e_2=-\frac{1}{4}e_4, e_2\cdot e_3=\frac{1}{2}e_1,$\\
				&$e_3\cdot e_1=\frac{1}{2}e_2, e_3\cdot e_2=-\frac{1}{2}e_1, e_3\cdot e_3=-\frac{1}{4}e_4,$\\
				&$e_j\cdot e_4= e_4\cdot e_j=e_j,~ j=1,\ldots,4.$
				\\\hline
				\caption{Left-symmetric product in $\mso\oplus\R e_4$ and $ \msl\oplus\R e_4$}
		\label{tab2}
		
\end{longtable}}
\begin{remark}
Two LSA's $\h^{\lambda_1> 0}_2$  and $\h^{\lambda_2> 0}_2$ are isomorphic if and only if $\lambda_1=\lambda_2$. They
are associative if and only if $\lambda_1 = 0$. In this case, $\h^{\lambda=0}_2$ coincides with the matrix algebra
$\mathcal{M}_2(\R)$.
\end{remark}

    \subsection{The new Lie brackets of Frobeniusian non-solvable Lie algebras}
{\renewcommand*{\arraystretch}{1.3}
	\begin{longtable}{|c| l| }\hline
		Algebra&Non-vanishing brackets\\\hline
		\multirow{5}{*}{ $\G_\rho^1$}&  $[f_1,f_2]=f_3, [f_1,f_3]=-2f_1,[f_2,f_3]=2f_2, [f_1,f_5]=-\frac{1}{2}f_6+f_7-f_8,$\\
		&$[f_1,f_7]=-\frac{1}{2}f_6, [f_1,f_8]=-\frac{1}{2}f_6,[f_2,f_5]=\frac{1}{2}(-f_5+f_8),[f_2,f_6]=-f_7-f_8,$\\
		&$[f_2,f_7]=\frac{1}{2}(f_5-f_8),[f_2,f_8]=\frac{1}{2}(-f_5+f_8),[f_3,f_5]=-f_5-f_7+f_8,[f_3,f_6]=f_6,$\\
		&$[f_3,f_7]=-f_8,[f_3,f_8]=-f_7,[f_4,f_5]=-f_5+\frac{1}{2}f_6+f_7+f_8,[f_4,f_6]=-f_6,$\\
		&$[f_4,f_7]= -\frac{1}{2} f_6-f_7,[f_5,f_8]=\frac{1}{2}f_6-f_8$.\\\hline
		\multirow{5}{*}{ $\G_\rho^2$}&
		
		$[f_1,f_2]=f_3,[f_1,f_3]=-2f_1,[f_2,f_3]=2f_2,[f_1,f_5]=f_7-(1+\lambda)f_8, $\\
		&$[f_1,f_7]=-\frac{1+\lambda}{2}f_6,[f_1,f_8]=-\frac{1}{2}f_6,[f_2,f_6]=-f_7-(1-\lambda)f_8,[f_2,f_7]=\frac{1-\lambda}{2}f_5,$\\
		&$[f_2,f_8]=-\frac{1}{2}f_5,[f_3,f_5]=-f_5,[f_3,f_6]=f_6,[f_3,f_7]=-\lambda f_7-(1-\lambda^2)f_8,$\\
		&$[f_3,f_8]=-f_7+\lambda f_8,[f_4,f_5]=-(1+\lambda)f_5,[f_4,f_6]=(\lambda-1)f_6,$\\
		&$[f_4,f_7]=(\lambda^2-1)f_7+\lambda(1-\lambda^2)f_8,[f_4,f_8]=\lambda f_7-(1+\lambda^2)f_8$.\\\hline
		\multirow{5}{*}{ $\G_\rho^3$}&
		$[f_1,f_2]=f_3,[f_1,f_3]=-2f_1,[f_2,f_3]=2f_2,[f_1,f_5]=f_7-f_8,[f_1,f_7]=-3f_5,$\\
		&$[f_1,f_8]=-3(f_5+f_6),[f_2,f_5]=-f_7,[f_2,f_6]=f_7-f_8,[f_2,f_7]=f_5+\frac{1}{4}f_6,$\\
		&$[f_2,f_8]=-3f_5-\frac{3}{4}f_6,[f_3,f_5]=-f_5-f_6,[f_3,f_6]=3f_6,[f_3,f_7]=-2f_7-f_8,$\\
		&$[f_3,f_8]=-3f_7,[f_4,f_j]=-f_j, j=5,6,7,8$.
		\\\hline
		\multirow{5}{*}{ $\G_\rho^4$}&
		$[f_1,f_2]=f_3,[f_1,f_3]=-2f_1,[f_2,f_3]=2f_2,[f_1,f_5]=-\frac{1}{2}f_6+f_7+(-1+\frac{\nu}{2})f_8,$\\
		&$[f_1,f_6]=-\frac{\nu}{2}f_6+\frac{\nu^2}{2}f_8,[f_1,f_7]=-\frac{1}{2}f_6+\frac{\nu}{2}f_8,[f_1,f_8]=-\frac{1}{2}f_6+\frac{\nu}{2}f_8,$\\
		&$[f_2,f_5]=\frac{1}{2}(-f_5+f_8),[f_2,f_6]=-\frac{\nu}{2}f_5-f_7+(-1+\frac{\nu}{2})f_8,[f_2,f_7]=\frac{1}{2}(f_5-f_8),$\\
		&$[f_2,f_8]=\frac{1}{2}(-f_5+f_8),[f_3,f_5]=-f_5-f_7+f_8,[f_3,f_6]=f_6-\nu(f_7+f_8),$\\
		&$[f_3,f_7]=-f_8,[f_3,f_8]=-f_7,[f_4,f_5]=(-1+\frac{\nu}{2})f_5+\frac{1}{2}f_6+f_7+(1-\nu)f_8,$\\
		&$[f_4,f_6]=\frac{\nu^2}{2}f_5+(-1+\frac{\nu}{2})f_6-\nu f_7+\nu(1-\nu)f_8,[f_4,f_7]=\frac{\nu}{2}f_5-\frac{1}{2}f_6-f_7,$\\
		&$[f_4,f_8]=\frac{\nu}{2}f_5+\frac{1}{2}f_6-(\nu+1)f_8$.
		\\\hline
		\multirow{5}{*}{ $\G_\rho^5$}&
		$[f_1,f_2]=f_3,[f_1,f_3]=-f_2,[f_2,f_3]=f_1,[f_1,f_5]=\frac{\mu}{4}f_5-(\frac{\mu^2}{4}+1)f_8,[f_1,f_6]=\frac{1}{2}f_7,$\\
		&$[f_1,f_7]=-\frac{1}{2}f_6,[f_2,f_5]=\frac{\mu}{4}f_6-\frac{1}{2}f_7,[f_2,f_6]=-f_8,[f_2,f_7]=\frac{1}{2}f_5-\frac{\mu}{2}f_8,$\\
		&$[f_2,f_8]=\frac{1}{4}f_6,[f_3,f_5]=\frac{1}{2}f_6+\frac{\mu}{4}f_7,[f_3,f_6]=-\frac{1}{2}f_5+\frac{\mu}{2}f_8,[f_3,f_7]=-f_8,$\\
		&$[f_3,f_8]=\frac{1}{4}f_7,[f_4,f_5]=-(\frac{\mu^2}{4}+1)f_5+\frac{\mu(\mu^2+4)}{4}f_8,[f_4,f_6]=-f_6+\frac{\mu}{2}f_7,$\\
		&$[f_4,f_7]=-\frac{\mu}{2}f_6-f_7,[f_4,f_8]=-\frac{\mu}{4}f_5+(-1+\frac{\mu^2}{4})f_8$.
		\\\hline
		\multirow{5}{*}{ $\G_\rho^6$}&
		$[f_1,f_2]=f_3,[f_1,f_3]=-f_2,[f_2,f_3]=f_1,[f_1,f_5]=-f_8,[f_1,f_6]=\frac{1}{2}f_7,$\\
		&$[f_1,f_7]=-\frac{1}{2}f_6,[f_1,f_8]=\frac{1}{4}f_5,[f_2,f_5]=-\frac{1}{2}f_7,[f_2,f_6]=-f_8,[f_2,f_7]=\frac{1}{2}f_5,$\\
		&$[f_2,f_8]=\frac{1}{4}f_6,[f_3,f_5]=\frac{1}{2}f_6,[f_3,f_6]=-\frac{1}{2}f_5,[f_3,f_7]=-f_8,[f_3,f_8]=\frac{1}{4}f_7,$\\
		&$[f_4,f_j]=-f_j, j=5,6,7,8.$\\\hline
		\caption{Lie algebras associated to $\G_{\rho}=\h\oplus\h^*$ }
		\label{tab3}
	\end{longtable}}

\subsection{Isomorphisms of $\G_\rho$ to Turkowski's algebras   }

	{\renewcommand*{\arraystretch}{1.3}
	\begin{longtable}{|c| l| l|}\hline
		Source&Isomorphism&Target\\\hline
		\multirow{2}{*}{$\G^1_\rho$} &$f_1=e_3,f_2=e_1,
		f_3=e_2,f_4=-e_6,f_5=e_7+e_8,$&\multirow{2}{*}{$\mL_{8,16}$}\\
		&$f_6=-e_7+e_8, f_7=-e_5+e_8,
		f_8=e_4.$&\\\hline
		\multirow{2}{*}{$\G^2_\rho$}&$f_1=e_3,f_2=e_1,
		f_3=e_2,f_4=-e_7+\frac{2}{p+1}e_8,f_5=-e_5,$&\multirow{2}{*}{$\mL_{8,17}^{p\in]-1,1]\setminus\{0\}}$}\\
		&$f_6=e_6,f_7=-e_7-\frac{2p}{p+1}e_8,f_8=\frac{p+1}{2}e_4,\lambda=-\frac{p-1}{p+1}.$&\\
		\hline
		\multirow{2}{*}{$\G^3_\rho$} &$f_1=e_3,f_2=e_1,f_3=e_2,f_4=-e_6,f_5=-e_7+e_8,$&\multirow{2}{*}{$\mL_{8,20}$}\\
		&$f_6=-2e_5-\frac{1}{2}e_6,f_7=\frac{1}{2}e_7+\frac{1}{6}e_8,f_8=e_4.$&
		
		\\\hline
		\multirow{2}{*}{ $\G^4_\rho$}&$f_1=e_3,f_2=e_1,f_3=e_2,f_4=-pe_6-\frac{1}{p}e_8,f_5=p(e_7+e_8),$&\multirow{2}{*}{$\mL_{8,18}^{p>0}$}\\
		&$f_6=-e_7+e_8,f_7=-e_5+e_8,f_8=pe_4, \nu=-\frac{1}{p^2}.$&    
		\\\hline
		\multirow{2}{*}{$\G^5_\rho$}&$f_1=e_1,f_2=e_2,f_3=e_3,
		f_4=e_5-e_7-\frac{2}{p}e_8,\mu=\frac{2}{p},$&\multirow{2}{*}{ $\mL_{8,4}^{p\neq0}$}\\
		&$f_5=-e_5-e_7+\frac{2}{p}e_8,f_6=e_6+2e_8,f_7=e_6-2e_8,
		f_8=pe_4.$&
		\\\hline
		\multirow{2}{*}{ $\G^6_\rho$} &$f_1=e_1,f_2=e_2,f_3=e_3,f_4=2e_8,
		f_5=e_6,f_6=e_7,$&\multirow{2}{*}{$\mL_{8,3}$}\\
		&$f_7=e_5,f_8=e_4.$&
		\\\hline
		\multirow{2}{*}{ $\G^7_\rho=\G^2_\rho(\lambda=1)$    } &$f_1=e_3,f_2=e_1,f_3=e_2,f_4=-e_7+e_8,f_5=-e_5,f_6=e_6,$&\multirow{2}{*}{$\mL_{8,17}^{p=0}(a_{67}=0)$}\\
		&$f_7=-e_7,f_8=\frac{1}{2}e_4.$&\\
		\hline
\caption{Isomorphisms from the Lie algebras obtained in Table \ref{tab3}  onto  Turkowski algebras \cite{T}.}
\label{tab4}
\end{longtable}}
\subsection{Proof of  Proposition $\ref{pr7}$ and $\ref{pr I}$ $($continued$)$}
Let $\G_{\rho,\al}^\ell=\h_\ell\oplus (\h_\ell)^\ast$ and
	$\{f_1,\ldots,f_8\}$ be a basis of $\G_{\rho,\al}^\ell$, with
	$\h_\ell=\langle  f_1,f_2,f_3,f_4\rangle$,
	$(\h_\ell)^*=\langle f_5,f_6,f_7,f_8\rangle$ and
	$\al(f_i,f_j)=\sum_{k=5}^{8}\al_{ij}^kf_k\in Z^2_\rho\big(\h_\ell,(\h_\ell)^\ast\big)\bigcap\mathcal{C}^2_L(\h_\ell,(\h_\ell)^\ast)$ for $i,j\in\{1,\ldots,4\}$ and $\ell\in\{1,3,4,5,6\}$. Let $\phi\in\mathcal{C}^1(\h_\ell,\h_\ell^\ast)$, such that $\partial_\rho\phi=\alpha$. In this case, $\phi$ is given by\\\\
$\ast$ \text{For the left-symmetric algebra $\h_1$.} 
\begin{eqnarray*}
\phi(f_1)&=&-\alpha_{14}^{8}f_5+\left({\frac {\alpha_{23}^{7}}{4}}+{\frac {3\,\alpha_{24}^{5}}{4}}-{
\frac {3\,{\alpha_{34}}^{7}}{8}}+{\frac {3\,\alpha_{34}^{6}}{4}}
\right)f_6+\left({\frac {\alpha_{34}^{8}}{2}}-{\frac {\alpha_{14}^{8}}{2}}-\alpha_
{13}^{6}+\alpha_{14}^{6}-{\frac {\alpha_{34}^{7}}{2}}
\right)f_7\\
&&+\left({\frac {3\,\alpha_{34}^{8}}{2}}-{\frac {\alpha_{14}^{8}}{2}}-\alpha_{13}^{6}+\alpha_{14}^{6}-{\frac {\alpha_{34}^{7}}{2}}
\right)f_8,\\
\phi(f_2)&=&\left({\frac {{\alpha_{23}}^{7}}{4}}+{\frac {3\,{\alpha_{24}}^{5}}{4}}-{
\frac {3\,{\alpha_{34}}^{7}}{8}}+{\frac {3\,{\alpha_{34}}^{6}}{4}}
\right)f_5+\left({\frac {\alpha_{23}^{7}}{4}}+{\frac {\alpha_{24}^{5}}{4}}-{\frac {
\alpha_{34}^{7}}{8}}+{\frac {\alpha_{34}^{6}}{4}}+\alpha_{24}^{6}\right)f_6\\
&&+\left(-{\frac {\alpha_{23}^{7}}{4}}+{\frac {\alpha_{24}^{5}}{4}}-{\frac 
{\alpha_{34}^{7}}{8}}+{\frac {\alpha_{34}^{6}}{4}}
\right)f_7,\\
\phi(f_3)&=&\left({\frac {\alpha_{34}^{8}}{2}}-{\frac {\alpha_{14}^{8}}{2}}-\alpha_{13}^{6}+\alpha_{14}^{6}-{\frac {\alpha_{34}^{7}}{2}}
\right)f_5+\left(-{\frac {\alpha_{23}^{7}}{4}}+{\frac {\alpha_{24}^{5}}{4}}-{\frac{\alpha_{34}^{7}}{8}}+{\frac {\alpha_{34}^{6}}{4}}
\right)f_6\\
&&+\left({\frac {\alpha_{23}^{7}}{2}}+{\frac {\alpha_{24}^{5}}{2}}-{\frac {
\alpha_{34}^{7}}{4}}+{\frac {3\,\alpha_{34}^{6}}{2}}-\alpha_{13}^{6}
\right)f_7+\left({\frac {\alpha_{14}^{8}}{2}}-{\frac {\alpha_{34}^{8}}{2}}+\alpha_
{24}^{5}-\alpha_{14}^{6}
\right)f_8,\\
\phi(f_4)&=&\left({\frac {3\,\alpha_{34}^{8}}{2}}-{\frac {\alpha_{14}^{8}}{2}}-
\alpha_{13}^{6}+\alpha_{14}^{6}-{\frac {\alpha_{34}^{7}}{2}}
\right)f_5+\left({\frac {\alpha_{14}^{8}}{2}}-{\frac {\alpha_{34}^{8}}{2}}+\alpha_
{24}^{5}-\alpha_{14}^{6}
\right)f_7\\
&&+\left({\frac {\alpha_{23}^{7}}{2}}+{\frac {\alpha_{24}^{5}}{2}}+\alpha_
{14}^{8}-2\,{\alpha_{34}}^{8}-2\,\alpha_{14}^{6}-{\frac {\alpha_{
34}^{7}}{4}}+{\frac {3\,\alpha_{34}^{6}}{2}}+\alpha_{13}^{6}
\right)f_8.
\end{eqnarray*}
$\ast$ \text{ For the left-symmetric algebra $\h_3$.}
\begin{eqnarray*}
\phi(f_1)&=&\left(-4\,\alpha_{14}^{6}-4\,\alpha_{24}^{6}+\alpha_{14}^{5}+4\, \alpha_{24}^{5}
\right)f_5+\left(-4\,\alpha_{24}^{6}+\alpha_{24}^{5}\right)f_6+\left({\frac {\alpha_{34}^{5}}{2}}-2\,\alpha_{34}^{6}-2\,\alpha_{12}^{
6}\right)f_7\\
&&+\left(-{\frac {\alpha_{34}^{5}}{2}}-2\,\alpha_{34}^{6}-2\,\alpha_{12}^{6}\right)f_8,\\
\phi(f_2)&=&\left(-4\,\alpha_{24}^{6}+\alpha_{24}^{5}\right)f_5+\left({\frac {3\,\alpha_{24}^{6}}{2}}-{\frac {5\,\alpha_{34}^{6}}{2}}-{
\frac {\alpha_{34}^{5}}{2}}-2\,\alpha_{12}^{6}\right)f_7+\left({\frac {\alpha_{34}^{6}}{2}}-{\frac {\alpha_{24}^{6}}{2}}\right)f_8,\\
\phi(f_3)&=&\left({\frac {\alpha_{34}^{5}}{2}}-2\,\alpha_{34}^{6}-2\,\alpha_{12}^{6}\right)f_5+\left({\frac {3\,\alpha_{24}^{6}}{2}}-{\frac {5\,\alpha_{34}^{6}}{2}}-{
\frac {\alpha_{34}^{5}}{2}}-2\,\alpha_{12}^{6}
\right)f_6\\
&&+\left(-3\,\alpha_{14}^{6}+4\,\alpha_{23}^{6}+16\,\alpha_{24}^{6}-
\alpha_{24}^{5}
\right)f_7+\left(-\alpha_{14}^{6}+\alpha_{24}^{5}\right)f_8,\\
\phi(f_4)&=&\left(-{\frac {\alpha_{34}^{5}}{2}}-2\,\alpha_{34}^{6}-2\,\alpha_{12}^{6}\right)f_5+\left({\frac {\alpha_{34}^{6}}{2}}-{\frac {\alpha_{24}^{6}}{2}}\right)f_6+\left(-\alpha_{14}^{6}+\alpha_{24}^{5}\right)f_7\\
&&+\left({\frac {\alpha_{24}^{5}}{3}}-{\frac {\alpha_{14}^{6}}{3}}-{\frac {
4\,\alpha_{24}^{6}}{3}}
\right)f_8.
\end{eqnarray*}
$\ast$ \text{For the left-symmetric algebra $\h^{\nu<0}_4$.}
\begin{eqnarray*}
\phi(f_1)&=&{\frac {\alpha_{13}^{5}}{3}}f_5+\left({\frac { \left( 2\,\nu-3 \right) \alpha_{23}^{6}}{6}}-{\frac {
\alpha_{12}^{6}}{2}}-{\frac {3\,\alpha_{24}^{6}}{2}}
\right)f_6+\left(-{\frac {\alpha_{13}^{6}\nu}{2}}+{\frac {\alpha_{13}^{5}}{6}}+{\frac {\alpha_{13}^{8}}{2}}+\alpha_{34}^{5}\right)f_7\\
&&+\left(-{\frac {\alpha_{23}^{6}{\nu}^{2}}{3}}+{\frac { \left( 3\,\alpha_{
12}^{6}+3\,\alpha_{13}^{6}+3\,\alpha_{23}^{6}+9\,\alpha_{24}^{6
} \right) \nu}{6}}+{\frac {\alpha_{13}^{5}}{6}}+{\frac {\alpha_{13}
^{8}}{2}}\right)f_8,\\
\phi(f_2)&=&\left({\frac { \left( 2\,\nu-3 \right) {\alpha_{23}}^{6}}{6}}-{\frac {\alpha_{12}^{6}}{2}}-{\frac {3\,\alpha_{24}^{6}}{2}}\right)f_5-{\frac {\alpha_{23}^{6}}{3}}f_6+\left(-{\frac {\alpha_{12}^{6}}{2}}+{\frac {\alpha_{23}^{6}}{6}}+{\frac 
{\alpha_{24}^{6}}{2}} \right)f_7,\\
\phi(f_3)&=&\left(-{\frac {\alpha_{13}^{6}\nu}{2}}+{\frac {\alpha_{13}^{5}}{6}}+{
\frac {\alpha_{13}^{8}}{2}}+\alpha_{34}^{5} \right)f_5+\left(-{\frac {\alpha_{12}^{6}}{2}}+{\frac {\alpha_{23}^{6}}{6}}+{\frac 
{\alpha_{24}^{6}}{2}} \right)f_6\\
&&+\left({\frac { \left( \nu-2 \right) \alpha_{23}^{6}}{3}}-\alpha_{13}^{6}
-2\,\alpha_{24}^{6}+\alpha_{34}^{7}
\right)f_7\\
&&+\left(-{\frac {\alpha_{13}^{8}}{2}}+{\frac { \left( 3\,\nu-6 \right)
\alpha_{13}^{6}}{6}}-{\frac {\alpha_{13}^{5}}{6}}+{\frac { \left( 3
\,\alpha_{12}^{6}+\alpha_{23}^{6}-3\,\alpha_{24}^{6} \right) \nu
}{6}}-\alpha_{34}^{7}-\alpha_{12}^{6}-{\frac {\alpha_{23}^{6}}{3
}}-\alpha_{24}^{6}-\alpha_{34}^{5}
\right)f_8,\\
\phi(f_4)&=&\left(-{\frac {\alpha_{23}^{6}{\nu}^{2}}{3}}+{\frac { \left( 3\,\alpha_{12}^{6}+3\,\alpha_{13}^{6}+3\,\alpha_{23}^{6}+9\,\alpha_{24}^{6} \right) \nu}{6}}+{\frac {\alpha_{13}^{5}}{6}}+{\frac {\alpha_{13}
^{8}}{2}} \right)f_5\\
&&+\left(-{\frac {\alpha_{13}^{8}}{2}}+{\frac { \left( 3\,\nu-6 \right)
\alpha_{13}^{6}}{6}}-{\frac {\alpha_{13}^{5}}{6}}+{\frac { \left( 3
\,\alpha_{12}^{6}+\alpha_{23}^{6}-3\,\alpha_{24}^{6} \right) \nu
}{6}}-\alpha_{34}^{7}-\alpha_{12}^{6}-{\frac {\alpha_{23}^{6}}{3
}}-\alpha_{24}^{6}-\alpha_{34}^{5}
\right)f_7\\
&&+\left({\frac {\alpha_{23}^{6}{\nu}^{2}}{3}}-\nu\,\alpha_{12}^{6}-2\,
\alpha_{13}^{6}\nu-\nu\,\alpha_{24}^{6}-{\frac {\alpha_{13}^{5}}{
3}}+\alpha_{13}^{6}-{\frac {2\,\alpha_{23}^{6}}{3}}-2\,\alpha_{14
}^{6}-2\,\alpha_{24}^{6}+2\,\alpha_{34}^{5}+\alpha_{34}^{7}
\right)f_8.
\end{eqnarray*}

$\ast$ \text{For the left-symmetric algebra $\h_5^{\mu>0}$.}
\begin{eqnarray*}
\phi(f_1)&=&\left({\frac {2\,\alpha_{13}^{6}}{3}}-{\frac {4\,\alpha_{23}^{5}}{3}}\right)f_5+\left({\frac {\mu\,\alpha_{13}^{8}}{24}}+{\frac { \left( {\mu}^{2}+12
 \right) \alpha_{13}^{5}}{24}}+{\frac {\mu\,\alpha_{23}^{7}}{6}}+{
\frac {\alpha_{14}^{6}}{4}}
\right)f_6\\
&&+\left(-{\frac {\alpha_{13}^{5}\mu}{6}}-{\frac {\alpha_{13}^{8}}{6}}-{
\frac {2\,\alpha_{23}^{7}}{3}}
\right)f_7+\left({\frac { \left( \alpha_{13}^{6}-2\,\alpha_{23}^{5} \right) \mu}{3}
}+2\,\alpha_{12}^{6}+3\,\alpha_{34}^{6}\right)f_8,\\
\phi(f_2)&=&\left({\frac {\mu\,\alpha_{13}^{8}}{24}}+{\frac { \left( {\mu}^{2}+12
 \right) \alpha_{13}^{5}}{24}}+{\frac {\mu\,\alpha_{23}^{7}}{6}}+{
\frac {\alpha_{14}^{6}}{4}}
\right)f_5+\left({\frac {4\,\alpha_{13}^{6}}{3}}-{\frac {2\,\alpha_{23}^{5}}{3}}\right)f_6\\
&&+\left({\frac { \left( -\alpha_{13}^{6}+2\,\alpha_{23}^{5} \right) \mu}{6
}}-\alpha_{12}^{6}-{\frac {\alpha_{34}^{6}}{2}}
\right)f_7\\
&&+\left(-{\frac {\alpha_{13}^{5}{\mu}^{3}}{24}}+{\frac { \left( -\alpha_{13}^{8}-4\,\alpha_{23}^{7} \right) {\mu}^{2}}{24}}+{\frac { \left( -6
\,\alpha_{14}^{6}+12\,\alpha_{13}^{5} \right) \mu}{24}}+\alpha_{
13}^{8} \right)f_8,\\
\phi(f_3)&=&\left(-{\frac {\alpha_{13}^{5}\mu}{6}}-{\frac {\alpha_{13}^{8}}{6}}-{
\frac {2\,\alpha_{23}^{7}}{3}}
\right)f_5+\left({\frac { \left( -\alpha_{13}^{6}+2\,\alpha_{23}^{5} \right) \mu}{6}}-\alpha_{12}^{6}-{\frac {\alpha_{34}^{6}}{2}}
\right)f_6\\
&&+\left(-{\frac {\alpha_{13}^{5}{\mu}^{2}}{12}}+{\frac { \left( -\alpha_{13}^{8}-4\,\alpha_{23}^{7} \right) \mu}{12}}+\alpha_{13}^{5}-{
\frac {3\,\alpha_{14}^{6}}{2}}
\right)f_8,\\
\phi(f_4)&=&\left({\frac { \left( \alpha_{13}^{6}-2\,\alpha_{23}^{5} \right) \mu}{3}}+2\,\alpha_{12}^{6}+3\,\alpha_{34}^{6}\right)f_5\\
&&+\left(-{\frac {\alpha_{13}^{5}{\mu}^{3}}{24}}+{\frac { \left( -\alpha_{13
}^{8}-4\,\alpha_{23}^{7} \right) {\mu}^{2}}{24}}+{\frac { \left( -6
\,\alpha_{14}^{6}+12\,\alpha_{13}^{5} \right) \mu}{24}}+\alpha_{
13}^{8}\right)f_6\\
&&+\left(-{\frac {\alpha_{13}^{5}{\mu}^{2}}{12}}+{\frac { \left( -\alpha_{13
}^{8}-4\,\alpha_{23}^{7} \right) \mu}{12}}+\alpha_{13}^{5}-{
\frac {3\,\alpha_{14}^{6}}{2}}
\right)f_7\\
&&+\left(-{\frac {4\,{\mu}^{2}\alpha_{13}^{6}}{3}}+{\frac {8\,{\mu}^{2}\alpha_{23}^{5}}{3}}-4\,\mu\,\alpha_{12}^{6}-6\,\mu\,\alpha_{34}^{6}-{\frac {8\,\alpha_{13}^{6}}{3}}+{\frac {16\,\alpha_{23}^{5}}{3
}}+4\,\alpha_{14}^{5}\right)f_8.
\end{eqnarray*}

$\ast$ \text{For the left-symmetric algebra $\h_6$.}
\begin{eqnarray*}
\phi(f_1)&=&\left(-2\,\alpha_{34}^{7}-2\,\alpha_{13}^{6}+2\,\alpha_{24}^{6}\right)f_5+\left(-\alpha_{23}^{6}-{\frac {\alpha_{14}^{6}}{2}}\right)f_6+\left(-\alpha_{23}^{7}-{\frac {\alpha_{14}^{7}}{2}}\right)f_7+\left(2\,\alpha_{13}^{7}-3\,\alpha_{24}^{7}\right)f_8,\\
\phi(f_2)&=&\left(-\alpha_{23}^{6}-{\frac {\alpha_{14}^{6}}{2}}\right)f_5+\left(-\alpha_{34}^{7}+\alpha_{24}^{6}\right)f_6+\left(\alpha_{13}^{7}-{\frac {\alpha_{24}^{7}}{2}}\right)f_7+\left(3\,\alpha_{14}^{7}+2\,\alpha_{23}^{7}\right)f_8,\\
\phi(f_3)&=&\left(-\alpha_{23}^{7}-{\frac {\alpha_{14}^{7}}{2}}\right)f_5+\left(\alpha_{13}^{7}-{\frac {\alpha_{24}^{7}}{2}}\right)f_6+\left(-3\,\alpha_{14}^{6}-2\,\alpha_{23}^{6}\right)f_8,\\
\phi(f_4)&=&\left(2\,\alpha_{13}^{7}-3\,\alpha_{24}^{7}\right)f_5+\left(3\,\alpha_{14}^{7}+2\,\alpha_{23}^{7}\right)f_6+\left(-3\,\alpha_{14}^{6}-2\,\alpha_{23}^{6}\right)f_7+4\,\alpha_{34}^{7}f_8.
\end{eqnarray*}
Consequently, we have $H^2_\rho(\h_\ell,\h_\ell^\ast)=0$. In addition, we can easily see that $\phi\in\mathcal{C}^1_L(\h_\ell,\h_\ell^\ast)$, which implies that $\partial_\rho\mathcal{C}^1_L(\h_\ell,\h_\ell^\ast)=C^2_L(\h_\ell,\h_\ell^\ast)\bigcap Z^2_\rho(\h_\ell,\h_\ell^\ast)$, i.e., $H^2_{L,\rho}(\h_\ell,\h_\ell^\ast)=0$ for all $\ell=1,3,4,5,6$.

\end{document}